\DeclareMathOperator{\disc}{disc}
\newcommand{\N}{\mathbb{N}}
\newcommand{\R}{\mathbb{R}}
\newcommand{\BB}{\mathcal{B}}
\newcommand{\CC}{\mathcal{C}}
\newcommand{\II}{\mathcal{I}}
\newcommand{\NN}{\mathcal{N}}
\newcommand{\PP}{\mathcal{P}}
\newcommand{\RR}{\mathcal{R}}
\begin{document}

\title*{Calculation of Discrepancy Measures and Applications}
\author{Carola Doerr, Michael Gnewuch and Magnus Wahlstr\"om}
\institute{
Carola Doerr \at 
Universit\'e Paris Diderot - Paris 7, LIAFA, Paris, France 
and \\
Max Planck Institute for Informatics, 
Saarbr\"ucken, Germany, \email{winzen@mpi-inf.mpg.de}
\and
Michael Gnewuch \at School of Mathematics and Statistics, University of New South Wales, Sydney, NSW, 2052, Australia, \email{m.gnewuch@unsw.edu.au}
\and Magnus Wahlstr\"om \at Max Planck Institute for Informatics, 
Saarbr\"ucken, Germany, \email{wahl@mpi-inf.mpg.de}
}
%
%
\maketitle

\abstract{In this book chapter we survey known approaches and algorithms to compute discrepancy measures of 
point sets. After providing an introduction which puts the calculation of discrepancy measures in a more general
context, we focus on the geometric discrepancy measures for which computation algorithms have been designed.
In particular, we explain methods to determine $L_2$-discrepancies and approaches to tackle
the inherently difficult problem to calculate the star discrepancy of given sample sets. 
We also discuss in more detail three applications of algorithms to approximate discrepancies. 
}

\section{Introduction and Motivation}
\label{INTRO}

In many applications it is of interest to measure the quality of certain 
point sets, e.g., to test whether successive pseudo-random numbers are statistically independent, see, e.g., \cite{Knu81, LH98, Nie92}, or whether certain
sample sets are suitable for multivariate numerical integration of certain
classes of integrands, see, e.g., \cite{DP12PD}. Other areas where the need of such measurements 
may occur include
the generation of low-discrepancy samples, the design of computer experiments,  
computer graphics, and stochastic programming. (We shall describe some of these applications
in more detail in Section~\ref{APPLICATIONS}.) A particularly useful class of quality measures, on which we want to focus in this book chapter, is the class of discrepancy measures.
Several different discrepancy measures are known. 
Some of them allow for an efficient
evaluation, others are hard to evaluate in practice. We shall give several examples below, but before
doing so, let us provide a rather general definition of a geometric discrepancy measure.

Let $(M, \Sigma)$ be a measurable space. Now let us consider two measures $\mu$ and $\nu$ defined on the $\sigma$-algebra $\Sigma$ of $M$. A typical situation would be that 
$\mu$ is a rather complicated measure, e.g., a continuous measure or a discrete measure supported on a large number of atoms, and $\nu$ is a simpler measure, e.g., a
discrete probability measure with equal probability weights or  a discrete (signed) measure supported on a small number
of atoms. We are interested in approximating $\mu$ by the simpler object $\nu$ in some sense and want to quantify the approximation quality. This can be done with the help
of an appropriate discrepancy measure.

Such situations occur, e.g., in numerical integration, where one has to deal with a continuous measure $\mu$ to evaluate integrals of the 
form $\int_M f\,{\rm d}\mu$ and wants to approximate these integrals with the help of a quadrature formula
\begin{equation}\label{quadrature}
 Qf = \sum^n_{i=1} v_i f(x^{(i)}) = \int_M f \,{\rm d}\nu;
\end{equation}
here $\nu = \nu_Q$ denotes the discrete signed measure $\nu(A)= \sum^n_{i=1} v_i 1_A(x^{(i)})$, with $1_A$ being the characteristic
function of $A\in \Sigma$.
Another instructive example is scenario reduction in stochastic programming, which will be discussed in more detail in Section~\ref{SCENARIO}.

To quantify the discrepancy of $\mu$ and $\nu$ one may select a subset $\mathcal{B}$ of the $\sigma$-algebra $\Sigma$, the class of 
\emph{test sets},
to define the \emph{local discrepancy} of $\mu$ and $\nu$ in a test set 
$B \in \mathcal{B}$ as
\begin{equation*}
 \Delta(B;\mu,\nu) := \mu(B) - \nu(B),
\end{equation*}
and the \emph{geometric $L_\infty$-discrepancy} of $\mu$ and $\nu$ with respect to $\mathcal{B}$ as
\begin{equation}\label{l_infty}
 \disc_\infty(\mathcal{B};\mu,\nu) := \sup_{B\in\mathcal{B}}|\Delta(B;\mu,\nu)|.
\end{equation}
Instead of considering the geometric $L_\infty$-discrepancy, i.e., the supremum
norm of the local discrepancy, one may prefer to consider different norms of the
local discrepancy. 
If, e.g., the class of test sets $\mathcal{B}$ is endowed with a $\sigma$-algebra
$\Sigma(\mathcal{B})$ and a probability measure $\omega$ on $\Sigma(\mathcal{B})$,
and the restrictions of $\mu$ and  $\nu$ to $\mathcal{B}$ are measurable functions, 
then one can
consider for $p\in (0,\infty)$ the \emph{geometric $L_p$-discrepancy} with 
respect to $\mathcal{B}$, defined by
\begin{equation*}
 \disc_p(\mathcal{B};\mu,\nu) 
:= \left( \int_{\mathcal{B}} |\Delta(B;\mu,\nu)|^p \, {\rm d}\omega(B) \right)^{1/p}.
\end{equation*}
In some cases other norms of the local discrepancy may be of interest, too.

In the remainder of this chapter we restrict ourselves to considering discrete measures of the form
\begin{equation}\label{nu_B}
 \nu(B) = \sum^n_{i=1} v_i 1_B(x^{(i)}),
\hspace{3ex}\text{where $v_1,\ldots, v_n\in\R$ and $x^{(1)},\ldots,x^{(n)}\in M$}.
\end{equation}
In the case where $v_i = 1/n$ for all $i$, the quality of the probability measure $\nu = \nu_X$ is 
completely determined by the quality of the ``sample points'' $x^{(1)},\ldots,x^{(n)}$.
The case where not all $v_i$ are equal to $1/n$ is of considerable interest for numerical integration
or stochastic programming (see Section~\ref{SCENARIO}). As already mentioned above, in the case of numerical integration it is natural
to relate a quadrature rule $Qf$ as in~(\ref{quadrature}) to the signed measure $\nu = \nu_Q$ in~(\ref{nu_B}). The quality of the quadrature
$Q$ is then determined by the sample points $x^{(1)}, \ldots, x^{(n)}$ and the (integration) weights $v_1, \ldots, v_n$.  

Let us provide a list of examples of specifically interesting discrepancy measures.

\begin{itemize}
\item \emph{Star discrepancy.}
Consider the situation where $M=[0,1]^d$ for some $d\in\N$,
$\Sigma$ is the usual $\sigma$-algebra of Borel sets of $M$, and
$\mu$ is the $d$-dimensional Lebesgue measure $\lambda^d$ on
$[0,1]^d$. Furthermore, let $\mathcal{C}_d$ be the class of all axis-parallel
half-open boxes anchored in zero $[0,y) = [0,y_1)\times \cdots \times [0,y_d)$, $y\in [0,1]^d$.
Then the $L_\infty$-\emph{star discrepancy} of the finite sequence $X=(x^{(i)})^n_{i=1}$ in $[0,1)^d$
is given by
\begin{equation*}
d^*_\infty(X) := \disc_\infty(\mathcal{C}_d;\lambda^{d},\nu_X) 
= \sup_{C\in\mathcal{C}_d}|\Delta(C;\lambda^d,\nu_X)|,
\end{equation*}
where 
\begin{equation}
\label{nuX}
 \nu_X(C) := \frac{1}{n} \sum^n_{i=1} 1_C(x^{(i)})
\hspace{2ex}\text{for all $C\in \Sigma$.}
\end{equation}
Thus $\nu_X$ is the counting measure 
that simply counts for given Borel sets $C$ the number of points of $X$ contained in $C$.
The star discrepancy is probably the most extensively studied discrepancy measure. Important results
about the star discrepancy and its relation to numerical integration can, e.g., be found in 
\cite{BC87, DP10, DT97, Mat99, Nie92} or the book chapters \cite{Bil12PD, CS12PD, DP12PD}.
 
Since we can identify $\mathcal{C}_d$ with $[0,1]^d$ via the mapping 
$[0,y) \mapsto y$, we may choose $\Sigma(\mathcal{C}_d)$ as the $\sigma$-algebra of Borel sets of $[0,1]^d$ and the probability measure $\omega$ on  $\Sigma(\mathcal{C}_d)$
as $\lambda^d$. Then for $1 \le p <\infty$ the \emph{$L_p$-star discrepancy}
 is given by
\begin{equation*}
d^*_p(X) :=  \disc_p(\mathcal{C}_d;\lambda^d,\nu_X) 
= \left( \int_{[0,1]^d} \left| y_1\cdots y_d - \frac{1}{n}
\sum^n_{i=1} 1_{[0,y)} (x^{(i)}) \right|^p\,{\rm dy} \right)^{1/p}.
\end{equation*}
In the last few years also norms of the local discrepancy function
different from $L_p$-norms have been studied in the literature, such as suitable 
Besov, Triebel-Lizorkin, Orlicz, and
BMO\footnote{BMO stands for ``bounded mean oscillation''.} norms, see, e.g., 
\cite{BLPV09, Hin10, Hin12, MarArXiv,Mar13, Tri10} and the book chapter \cite{Bil12PD}.

The star discrepancy is easily generalized to general measures $\nu$.
For an application that considers measures $\nu$ different from $\nu_X$ 
see Section~\ref{SCENARIO}. 

Notice that the point $0$ plays a distinguished role in the definition of the star 
discrepancy. That is why $0$ is often called the anchor of the star discrepancy.
There are discrepancy measures on $[0,1]^d$ similar to the star discrepancy
that rely also on axis-parallel boxes and on an anchor
different from $0$, such as the \emph{centered discrepancy} \cite{Hic98} or 
\emph{quadrant discrepancies} \cite{HSW04, NW09}. 
Such kind of discrepancies are, e.g., discussed in more detail in \cite{NW08,NW10}.

\item \emph{Extreme discrepancy.}
The extreme discrepancy is also known under the names \emph{unanchored discrepancy}
and \emph{discrepancy for axis-parallel boxes}. 
Its definition is analogue to the definition of the star discrepancy, except that we 
consider the class of test sets $\mathcal{R}_d$ of all half-open axis-parallel boxes
$[y,z) =[y_1,z_1)\times \cdots \times [y_d,z_d)$, $y,z \in [0,1]^d$. We may identify 
this class with the subset $\{(y,z) \,|\, y,z\in [0,1]^d, y\le z\}$ of $[0,1]^{2d}$,
and endow it with the probability measure ${\rm d}\omega(y,z) := 2^d\,{\rm d}y\,{\rm d}z$.
Thus the $L_\infty$-\emph{extreme discrepancy} of the finite sequence $X=(x^{(i)})^n_{i=1}$ in $[0,1)^d$
is given by
\begin{equation*}
d^e_\infty(X) := \disc_\infty(\mathcal{R}_d;\lambda^{d},\nu_X) 
= \sup_{R\in\mathcal{R}_d}|\Delta(R;\lambda^d,\nu_X)|,
\end{equation*}
and for $1 \le p <\infty$ the \emph{$L_p$-extreme discrepancy} is given by
\begin{equation*}
\begin{split}
d^e_p(X) :=&  \disc_p(\mathcal{R}_d;\lambda^d,\nu_X)\\ 
=& \left( \int_{[0,1]^d} \int_{[0,z)} 
\left| \prod_{i=1}^d(z_i- y_i) - \frac{1}{n}
\sum^n_{i=1} 1_{[y,z)} (x^{(i)}) \right|^p\,2^d\,{\rm d}y\,{\rm d}z \right)^{1/p}.
\end{split}
\end{equation*}
The $L_2$-extreme discrepancy was proposed as a quality measure for quasi-random 
point sets in \cite{MC94}.

To avoid confusion, it should be mentioned that the term ``extreme discrepancy'' is used by
some authors in a different way. Especially in the literature before 1980 the attribute ``extreme''
often refers to a supremum norm of a local discrepancy function , see, e.g., \cite{KN74, Whi77}.
Since the beginning of the 1990s several authors used the attribute ``extreme'' to refer to 
the set system of unanchored axis-parallel boxes, see, e.g., \cite{MC94, Nie92, NW10}.

\item \emph{$G$-discrepancy.}
The $G$- or $G$-star discrepancy is defined as the star discrepancy, except that 
the measure $\mu$ is in general not the $d$-dimensional Lebesgue
measure $\lambda^d$ on $[0,1]^d$, but some probability measure given by a distribution
function $G$ via $\mu([0,x)) = G(x)$ for all $x\in [0,1]^d$. This is
\begin{equation*}
\disc_\infty(\mathcal{C}_d;\mu,\nu_X) 
= \sup_{C\in\mathcal{C}_d} \left| G(x) - \frac{1}{n} \sum^n_{i=1} 1_{C}(x^{(i)})\right|.
\end{equation*}
The $G$-discrepancy has applications in quasi-Monte Carlo sampling, see, e.g., \cite{Okt99}. 
Further results on the $G$-star discrepancy can, e.g., be found in \cite{GR09}.

\item \emph{Isotrope discrepancy.} 
Here we have again $M=[0,1]^d$ and $\mu = \lambda^d$. As set of test sets we consider
$\II_d$, the set of all closed convex subsets of $[0,1]^d$. Then the 
\emph{isotrope discrepancy} of a set $X$ is defined as
\begin{equation}\label{isotrope}
\disc_\infty(\II_d;\lambda^{d},\nu_X) 
:= \sup_{R\in\II_d}|\Delta(R;\lambda^d,\nu_X)|.
\end{equation}
This discrepancy was proposed by Hlawka \cite{Hla64}. It has applications in probability
theory and statistics and was studied further, e.g., in \cite{Bec88, Nie72a, Sch75, Stu77, Zar70}.

\item \emph{Hickernell's modified $L_p$-discrepancy.}
For a finite point set $X\subset [0,1]^d$ and a set of variables $u\subset \{1,\ldots,d\}$ let $X_u$ denote the orthogonal projection of $X$ into the cube $[0,1]^{u}$. 
Then for $1\le p < \infty$ the
\emph{modified $L_p$-discrepancy} \cite{Hic98} of the point set 
$X$ is given by
\begin{equation}\label{hickernell_disc}
 D^*_p(X) := \left( \sum_{\emptyset \neq u \subseteq \{1,\ldots,d\}} d^*_p(X_u)^p \right)^{1/p},
\end{equation}
and for $p=\infty$ by 
\begin{equation}\label{hickernell_star_disc}
 D^*_\infty(X) := \max_{\emptyset \neq u \subseteq \{1,\ldots,d\}} d^*_\infty (X_u).
\end{equation}
In the case where $p=2$ this discrepancy was already considered by Zaremba in \cite{Zar68}.
We will discuss the calculation of the modified $L_2$-discrepancy in Section~\ref{Ltwo} and present an application of it in Section~\ref{GENOPT}.

The modified $L_p$-discrepancy is an example of a weighted discrepancy, which is the next type of discrepancy we
want to present.

\item \emph{Weighted discrepancy measures.}
In the last years \emph{weighted discrepancy measures} have become very popular,
especially in the study of tractability of multivariate and infinite-dimensional integration, see the first paper on this topic, \cite{SW98}, and, e.g.,
\cite{DP12PD, DSWW04, Gne12, HW01, LP03, NW10} and the literature mentioned therein.

To explain the idea behind the weighted discrepancy let us confine ourselves to the case where $M= [0,1]^d$ and $\nu=\nu_X$ is a discrete measure 
as in~(\ref{nuX}). (A more general definition of weighted geometric $L_2$-discrepancy, which comprises in particular
infinite-dimensional discrepancies, can be found in \cite{Gne12}.)
 We assume that there exists a one-dimensional measure $\mu^1$
 and a system $\mathcal{B}_1$ of test sets  on $[0,1]$. For $u\subseteq \{1,\ldots,d\}$ we define the product measure 
$\mu^u := \otimes_{j\in u} \mu^1$ and the system of test sets
\begin{equation*}
 \mathcal{B}_u := \bigg\{ B \subseteq [0,1]^u \,\bigg|\, B = \prod_{j\in u} B_j\,,\, B_j\in \mathcal{B}_1 \bigg\}
\end{equation*}
on $[0,1]^u$. Again we denote the projection of a set $X \subseteq [0,1]^d$ to $[0,1]^u$ by $X_u$.
Put $\mathcal{B}:= \mathcal{B}_{\{1,\ldots,d\}}$ and $\mu:= \mu^{\{1,\ldots,d\}}$. 
Let $(\gamma_u)_{u\subseteq \{1,\ldots,d\}}$ be a family of \emph{weights}, i.e., of non-negative numbers. 
Then the \emph{weighted $L_\infty$-discrepancy}
$d^*_{\infty,\gamma}(X)$ is given by
\begin{equation}\label{weighted_disc}
d^*_{\infty,\gamma}(X) := \disc_{\infty,\gamma}(\mathcal{B}; \mu, \nu_X) = \max_{\emptyset \neq u \subseteq \{1,\ldots,d\}} \gamma_u  \disc_\infty(\mathcal{B}_u; \mu^u, \nu_{X_u}).
\end{equation}
If furthermore there exists a probability measure $\omega = \omega^1$ on $\mathcal{B}_1$, put $\omega^u := \otimes_{j\in u} \omega^1$
for $u\subseteq \{1,\ldots,d\}$. The \emph{weighted $L_p$-discrepancy} 
$d^*_{p,\gamma}(X)$ is then defined by
\begin{equation*}
d^*_{p,\gamma}(X) := \disc_{p,\gamma}(\mathcal{B}; \mu, \nu_X) = \left( \sum_{\emptyset \neq u \subseteq \{1,\ldots,d\}} \gamma_u  \disc_p(\mathcal{B}_u; \mu^u, \nu_{X_u})^p \right)^{1/p},
\end{equation*}
where
\begin{equation*}
\disc_p(\mathcal{B}_u; \mu^u, \nu_{X_u})^p = \int_{\mathcal{B}_u} \left| \mu^u(B_u) - \nu_{X_u}(B_u) \right|^p \, {\rm d}\omega^u (B_u).
\end{equation*}
Hence weighted discrepancies do not only measure the uniformity of a point set $X \subset [0,1]^d$ in $[0,1]^d$, but also take
into consideration the uniformity of 
projections $X_u$ of $X$ in $[0,1]^u$. Note that Hickernell's modified $L_p$-discrepancy,
see~(\ref{hickernell_disc}), is a weighted $L_p$-star discrepancy for the particular family of weights $(\gamma_u)_{u\subseteq \{1,\ldots,d\}}$
where $\gamma_u=1$ for all $u$. 
\end{itemize} 

Other interesting discrepancy measures in Euclidean spaces as, e.g., 
discrepancies with respect to half-spaces, balls, convex polygons or rotated boxes,
can be found in \cite{BC87, Cha00, CT07, Mat99} and the literature mentioned therein.
A discrepancy measure that is defined on a flexible region, i.e., on a certain kind of parametrized variety
$M = M(m)$, $m\in (0,\infty)$, of measurable subsets of $[0,1]^d$, is the 
\emph{central composite discrepancy} proposed in \cite{CH10}.
For discrepancy measures on manifolds as, e.g., the \emph{spherical cap discrepancy},
we refer to \cite{BCCGST11, DP12PD, DT97} and the literature listed there.

There are further figures of merits known to measure the quality of points sets that are no geometric discrepancies in the sense
of our definition. Examples include the classical and the dyadic \emph{diaphony} \cite{HL97, Zin76}
or the figure of merit $R(z,n)$ \cite{JS92, Nie92, SJ94}, which are closely related to numerical integration (see also the comment at the 
beginning of Section~\ref{approximation}). 
We do not discuss such alternative figures of merit here, but focus solely on geometric discrepancy measures. In fact, we confine ourselves to
the discrepancies that can be found in the list above.
The reason for this is simple: Although deep theoretical results have been published for other geometric discrepancies, there have been,
to the best of our knowledge, no serious attempts to evaluate these geometric discrepancies efficiently.
Efficient calculation or approximation algorithms were developed almost exclusively for discrepancies that are based on axis-parallel rectangles,
such as the star, the extreme or the centered discrepancy, and weighted versions thereof.
We briefly explain in the case of the isotrope discrepancy at the beginning of Section~\ref{L_INFTY} 
the typical problem that appears if one wants to approximate other geometric discrepancies than those based on axis-parallel rectangles.

This book chapter is organized as follows:
In Section~\ref{Ltwo} we consider $L_2$-dis\-cre\-pan\-cies. In Section~\ref{WARNOCK} we explain why many of these discrepancies can be calculated exactly 
in a straightforward manner with $O(n^2\,d)$ operations, where $n$ denotes (as always) the number of points in $X$ and $d$ the dimension.
In Section~\ref{HEINRICH} we discuss some asymptotically faster algorithms which allow for an evaluation of the $L_2$-star and related
$L_2$-discrepancies in time $O(n \log n)$ (where this time the constant in the big-$O$-notation depends on $d$).
The problem of calculating $L_2$-discrepancies is the one for which the fastest algorithms are available. As we will see in the following sections,
for $p\neq 2$ there are currently no similarly efficient methods known.

In Section~\ref{L_INFTY} we discuss the calculation of the $L_\infty$-star discrepancy, which is the most prominent discrepancy measure.
To this discrepancy the largest amount of research has been devoted so far, both for theoretical and practical reasons. We remark on
known and possible generalizations to other $L_\infty$-discrepancy measures.
In Section~\ref{EAC} we present elementary algorithms 
to calculate the star discrepancy exactly. These algorithms are beneficial in low dimensions,
but clearly suffer from the curse of dimensionality. Nevertheless, the ideas used for these algorithms are fundamental for the following
Subsections of Section~\ref{L_INFTY}. In Section~\ref{DEM} we discuss the more sophisticated algorithm of Dobkin, Eppstein and Mitchell,
which clearly improves on the elementary algorithms.
In Section~\ref{Complexity} we review recent results about the complexity of exactly calculating the star discrepancy.
These findings lead us to study approximation algorithms in Section~\ref{approximation}. Here we present several 
different approaches.

In Section~\ref{L_P} we discuss the calculation of $L_p$-discrepancy measures for values of $p$ other than $2$ and $\infty$.
This Section is the shortest one in this book chapter, due to the relatively small amount of research that has been done on this topic.

In Section~\ref{APPLICATIONS} we discuss three applications of discrepancy calculation and approximation algorithms in more detail.
These applications are the quality testing of points (Section~\ref{QUALTEST}), the generation of low-discrepancy point sets via
an optimization approach (Section~\ref{GENOPT}), and scenario reduction in stochastic programming (Section~\ref{SCENARIO}).
The purpose of this section is to show the reader more recent applications and to give her a feeling of typical instance sizes that
can be handled and problems that may occur.

\section{Calculation of $L_2$-Discrepancies}
\label{Ltwo}

$L_2$-discrepancies are often used as quality measures for sets of sample points. One reason for this is 
the fact that geometric $L_2$-discrepancies
are equal to the worst-case integration error on corresponding reproducing kernel Hilbert spaces and the 
average-case integration error
on corresponding larger function spaces, see the research 
articles~\cite{FH96, Gne12, Hic98,  HW01, NW09, SW98, Woz91, Zar68} or the surveys in~\cite{DP12PD, NW10}.  

An additional advantage of the $L_2$-star discrepancy and related $L_2$-discrepancies is that they can be explicitly
computed at cost $O(dn^2)$, see Section~\ref{WARNOCK} below. Faster algorithms that are particularly
beneficial for lower dimension $d$ and larger number of points $n$ will be presented in Section~\ref{HEINRICH}.

\subsection{Warnock's Formula and Generalizations}
\label{WARNOCK}

It is easily verified by direct calculation that the $L_2$-star discrepancy of a given $n$-point set 
$X=(x^{(i)})^n_{i=1}$ in dimension $d$
can be calculated via Warnock's formula~\cite{War72} 
\begin{equation}
 \label{warnock}
d^*_2(X) = \frac{1}{3^d} 
- \frac{2^{1-d}}{n} \sum^n_{i=1} \prod_{k=1}^d (1-(x^{(i)}_k)^2)
+ \frac{1}{n^2} \sum^n_{i,j=1} \prod_{k=1}^d \min\{1-x^{(i)}_k, 1-x^{(j)}_k\}
\end{equation}
with $O(dn^2)$ arithmetic operations. As pointed out in \cite{FH96, Mat98}, the computation requires a sufficiently high precision, since the three terms in the formula are
usually of a considerably larger magnitude than the resulting $L_2$-star discrepancy.
A remedy suggested by T.~T.~Warnock \cite{War13} is to subtract off the expected value of each summand in formula (\ref{warnock}) (assuming that all coordinate
values $x^{(i)}_k$ are uniformly and independently distributed) and to add it back at the end of the computation. This means we write down (\ref{warnock}) in
the equivalent form
\begin{equation}
\begin{split}
 \label{warnock_trick}
d^*_2(X) &= \frac{1}{n} \left[ \frac{1}{2^d} - \frac{1}{3^d} \right] 
- \frac{2^{1-d}}{n} \sum^n_{i=1} \left[ \prod_{k=1}^d (1-(x^{(i)}_k)^2) - \left( \frac{2}{3} \right)^d \right]\\
&+ \frac{1}{n^2} \left( \sum^n_{\stackrel{i,j=1}{i\neq j}} \left[ \prod_{k=1}^d \min\{1-x^{(i)}_k, 1-x^{(j)}_k\} - \frac{1}{3^d} \right]
+ \sum^n_{i=1} \left[ \prod_{k=1}^d (1-x^{(i)}_k) - \frac{1}{2^d} \right] \right),
\end{split}
\end{equation}
and calculate first the terms inside the brackets $[\ldots ]$ and sum them up afterwards.
These terms are, in general, more well-behaved than the terms appearing in the original formula (\ref{warnock}), and the additional 
use of Kahan summation \cite{Kah65} helps to further reduce rounding errors \cite{War13}. 

For other $L_2$-discrepancies similar formulas can easily be deduced by direct calculation. 
So we have, e.g., that the extreme $L_2$-discrepancy of $X$ can be written as
\begin{equation}
 \label{war_extreme}
\begin{split}
d^e_2(X) = &\frac{1}{12^d} 
- \frac{2}{6^d\,n} \sum^n_{i=1} \prod_{k=1}^d (1-(x^{(i)}_k)^3 - (1-x^{(i)}_k)^3) \\
&+ \frac{1}{n^2} \sum^n_{i,j=1} \prod_{k=1}^d \min\{x^{(i)}_k, x^{(j)}_k \} \min\{1-x^{(i)}_k, 1-x^{(j)}_k\},
\end{split}
\end{equation}
cf. \cite[Sect.~4]{Hei96}, and the weighted $L_2$-star discrepancy of $X$ for \emph{product weights} $\gamma_u = \prod_{j\in u} \gamma_j$,
$\gamma_1 \ge \gamma_2 \ge \cdots \ge \gamma_d \ge 0$, as 
\begin{equation}
 \label{war_weight}
\begin{split}
d^*_{2, \gamma}(X) = &\prod_{k=1}^d \left( 1+ \frac{\gamma_k^2}{3} \right) 
- \frac{2}{n} \sum^n_{i=1} \prod_{k=1}^d \left( 1 + \gamma^2_k \frac{1 -(x^{(i)}_k)^2}{2} \right)\\
&+ \frac{1}{n^2} \sum^n_{i,j=1} \prod_{k=1}^d \left( 1 - \gamma^2_k \min\{1-x^{(i)}_k, 1-x^{(j)}_k\} \right),
\end{split}
\end{equation}
cf. also \cite[(6)]{DP12PD}.
In particular, the formula holds for the modified $L_2$-discrepancy (\ref{hickernell_disc})
that corresponds to the case where all weights $\gamma_j$, $j=1,2,\ldots$, are equal to $1$.
Notice that formulas (\ref{war_extreme}) and (\ref{war_weight}) can again be evaluated with 
$O(dn^2)$ arithmetic operations. In the case of the weighted
$L_2$-star discrepancy this is due to the simple structure of the product weights, whereas for an arbitrary family of weights
$(\gamma_u)_{u\subseteq \{1,\ldots,d\}}$ the cost of computing $d^*_{2,\gamma}(X)$ exactly will usually be of order 
$\Omega(2^d)$. 

\subsection{Asymptotically Faster Methods}
\label{HEINRICH}

For the $L_2$-star discrepancy
S.~Heinrich  \cite{Hei96} developed an 
algorithm which is asymptotically faster than the direct calculation of (\ref{warnock}). For fixed $d$ it uses at most $O(n\log^d n)$
elementary operations; here the implicit constant in the big-$O$-notation
depends on~$d$. This running time can be 
further reduced to $O(n\log^{d-1} n)$ by using a modification noted by K.~Frank and
S.~Heinrich in \cite{FH96}.

Let us start with the algorithm from \cite{Hei96}. For a quadrature rule 
\begin{equation*}
 Qf = \sum^n_{i=1} v_i f(x^{(i)}), 
\hspace{2ex}\text{with $v_i\in\R$ and $x^{(i)}\in [0,1]^d$ for all $i$},
\end{equation*}
we define the signed measure $\nu_Q$ by
\begin{equation*}
 \nu_Q(C) := Q(1_C) = \sum^n_{i=1} v_i 1_C(x^{(i)})
\hspace{2ex}\text{for arbitrary $C\in \mathcal{C}_d$.} 
\end{equation*}
Then it is straightforward to calculate 
\begin{equation}\label{heinrich}
\begin{split}
d^*_2(Q) := &\disc_2( \mathcal{C}_d; \lambda^d, \nu_Q)\\
= &\frac{1}{3^d} 
- 2^{1-d} \sum^n_{i=1} v_i \prod_{k=1}^d (1-(x^{(i)}_k)^2)
+ \sum^n_{i,j=1} v_i v_j \prod_{k=1}^d \min\{1-x^{(i)}_k, 1-x^{(j)}_k\}.
\end{split}
\end{equation}
If we are interested in evaluating this generalized version of (\ref{warnock})
in time $O(n \log^d n)$ or $O(n\log^{d-1} n)$, it obviously only remains to take
care of the efficient calculation of 
\begin{equation*}
 \sum_{i,j=1}^n v_iv_j \prod_{k=1}^d \min\{y^{(i)}_k, y^{(j)}_k\},
\hspace{2ex}\text{where $y^{(i)}_k := 1-x^{(i)}_k$ for $i=1,\ldots,n$.}
\end{equation*}
In the course of the algorithm we have actually to take care of a little bit 
more general quantities: Let $A=((v_i, y^{(i)}))^n_{i=1}$ and
$B=((w_i, z^{(i)}))^m_{i=1}$, where $n,m\in \N$, $v_i, w_i \in \R$ and $y^{(i)}, z^{(i)} 
\in [0,1]^d$ for all $i$.  
Put
\begin{equation*}
 D(A,B,d) := \sum^n_{i=1}\sum^m_{j=1} v_iw_j \prod^d_{k=1} \min\{y^{(i)}_k, z^{(j)}_k\}.
\end{equation*}
We allow also $d=0$, in which case we use the convention that the ``empty
product'' is equal to $1$. 

The algorithm is based on the following observation: If $d\ge 1$ and $y^{(i)}_d \le
z^{(j)}_d$ for all $i\in \{1,\ldots, n\}$, $j\in \{1,\ldots,m\}$, then
\begin{equation*}
 D(A,B,d) = \sum^n_{i=1}\sum^m_{j=1} (v_i \,y^{(i)}_d) w_j \prod^{d-1}_{k=1} 
\min\{y^{(i)}_k, z^{(j)}_k\} = D(\widetilde{A}, \overline{B}, d-1),
\end{equation*}
where $\widetilde{A} = ((\tilde{v}_i, \overline{y}^{(i)}))^n_{i=1}$, 
$\overline{B} = ((w_i, \overline{z}^{(i)}))^m_{i=1}$ with $\tilde{v}_i = (v_i\, y^{(i)}_d)$
and $\overline{y}^{(i)} = (y^{(i)}_k)^{d-1}_{k=1}$ and $\overline{z}^{(i)} = (z^{(i)}_k)^{d-1}_{k=1}$.
Hence we have reduced the dimension parameter $d$ by $1$. But in the case where $d=0$,
we can simply calculate 
\begin{equation}\label{base_case}
 D(A,B,0) = \left( \sum^n_{i=1} v_i \right) \left( \sum^m_{j=1} w_i \right)
\end{equation}
with cost of order $O(n+m)$. 
This observation will be exploited by the algorithm proposed by Heinrich to 
calculate $D(A,B,d)$ for given $d\ge 1$ and arrays $A$ and $B$ as above.


We describe here the version of the algorithm proposed in \cite[Sect.~2]{Hei96}; see also \cite[Sect.~5]{Mat98}. Let
$\mu$ denote the median of the $d$th components of the points $y^{(i)}$, $i=1,\ldots,n$,
from $A$. Then we split $A$ up into two smaller arrays $A_L$ and $A_R$ with $A_L$  
containing $\lfloor n/2 \rfloor$ points $y^{(i)}$ (and corresponding weights $v_i$) satisfying $y^{(i)}_d \le \mu$ and
$A_R$ containing the remaining $\lceil n/2 \rceil$ points (and corresponding weights)
satisfying $y^{(i)}_d \ge \mu$. Similarly, we split up $B$ into the two smaller arrays
$B_L$ and $B_R$ that contains the points (and corresponding weights) from $B$
whose $d$th components are less or equal than $\mu$ and greater than $\mu$, respectively.

Since we may determine $\mu$ with the help of a \emph{linear-time median-finding algorithm}
in time $O(n)$ (see, e.g., \cite[Ch.~3]{AHU74}), the whole partitioning procedure can be done in time $O(n+m)$.
With the help of this partitioning we can exploit the basic idea of the algorithm to 
obtain
\begin{equation}\label{hei-mat}
D(A,B,d) = D(A_L,B_L,d) + D(A_R, B_R,d) + D(\widetilde{A}_L, \overline{B}_R, d-1)
+ D(\overline{A}_R, \widetilde{B}_L, d-1),
\end{equation}
where, as above, $\widetilde{A}_L$ is obtained from $A_L$ by deleting the 
$d$th component of the points $y^{(i)}$ and substituting the weights $v_i$
by $v_i\, y^{(i)}_d$, and $\overline{A}_R$ is obtained from $A_R$ by also
deleting the $d$th component of the points $y^{(i)}$, but keeping the weights
$v_i$. In an analogous way we obtain $\widetilde{B}_L$ and $\overline{B}_R$, 
respectively.

The algorithm uses the step (\ref{hei-mat}) recursively in a \emph{divide-and-conquer}
manner. The ``conquer'' part consists of three base cases, which are solved directly.

The first base case is $m=0$; i.e., $B=\emptyset$. Then $D(A,B,d) = 0$.

The second one is the case $d=0$ already discussed above, where we simply use
formula (\ref{base_case}) for the direct calculation of $D(A,B,0)$ at cost at most $O(n+m)$.

The third base case is the case where $|A|=1$. Then we can compute directly
\begin{equation*}
 D(A,B,d) = v_1 \sum^m_{j=1}w_j \prod^d_{k=1} \min\{y^{(1)}_k, z^{(j)}_k\}.
\end{equation*}
This computation costs at most $O(m)$.

An inductive cost analysis reveals that the cost of this algorithm to calculate
$D(A,B,d)$ is of order $O((n+m) \log^d (n+1))$, see \cite[Prop.~1]{Hei96}.
As already said, the implicit constant in the big-$O$-notation depends on $d$.
J.~Matou\v{s}ek provided in \cite{Mat98} a running time analysis of Heinrich's 
algorithm that also takes care of its dependence on the dimension $d$ and compared it to the cost of the straightforward calculation of (\ref{heinrich}). From this analysis
one can conclude that Heinrich's algorithm reasonably outperforms the straightforward method if $n$ is larger than (roughly) $2^{2d}$; for  
details see \cite[Sect.~5]{Mat98}. 
Moreover, \cite{Hei96, Mat98} contain modifications of the algorithm and remarks on a practical implementation. 
Furthermore, Heinrich provides some numerical examples
with the number of points ranging from $1,024$ to $65,536$ in dimensions ranging from $1$ to $8$
comparing the actual computational effort of his method and the direct calculation of (\ref{heinrich}), see \cite[Sect.~5]{Hei96}.
In these examples his method was always more efficient than performing the direct  calculation; essentially, 
the advantage grows if the number of points increases, but shrinks if the dimension increases. 

As pointed out by Heinrich in \cite[Sect.~4]{Hei96}, his algorithm can be modified easily to calculate $L_2$-extreme discrepancies
instead of $L_2$-star discrepancies with essentially the same effort. Furthermore, he describes how to generalize his algorithm to
calculate ``$r$-smooth'' $L_2$-discrepancies, which were considered in \cite{Pas93, Tem90}. (Here the smoothness parameter $r$ is a non-negative integer. If 
$r=0$, we regain the $L_2$-star discrepancy. If $r>0$, then the $r$-smooth discrepancy is actually not any more a geometric $L_2$-discrepancy in the 
sense of our definition given in Section~\ref{INTRO}.)

Heinrich's algorithm can be accelerated with the help of the following observation
from \cite{FH96}: Instead of employing (\ref{base_case}) for the base case 
$D(A,B,0)$, it is possible to evaluate already the terms $D(A,B,1)$ that occur
in the course of the algorithm. If we want to calculate $D(A,B,1)$, we assume that the
elements $y^{(1)}, \ldots, y^{(n)}\in [0,1]$
from $A$ and $z^{(1)},\ldots, z^{(m)}\in [0,1]$ from $B$ are already in increasing order. 
This can be ensured by using a standard sorting algorithm to preprocess the input at cost 
$O((n+m) \log(n+m))$. Now we determine for each
$i$ an index $\nu(i)$ such that $y^{(i)} \ge z^{(j)}$ for $j=1,\ldots,\nu(i)$ and
$y^{(i)}< z^{(j)}$ for $j=\nu(i)+1,\ldots,m$. If this is done successively, starting
with $\nu(1)$, then this can be done at cost $O(n+m)$. Then
\begin{equation}\label{base_case_1}
 D(A,B,1) = \sum^n_{i=1} v_i \left( \sum^{\nu(i)}_{j=1} w_j z^{(j)} 
+ y^{(i)} \sum^m_{j=\nu(i)+1} w_j\right),
\end{equation}
and the right hand side can be computed with $O(n+m)$ operations if the inner sums are added up
successively. Thus the explicit evaluation of $D(A,B,1)$ can be done at total cost
$O(n+m)$. Using this new base case (\ref{base_case_1}) instead of (\ref{base_case})
reduces the running time of the algorithm to $O((n+m)\log^{d-1}(n+1))$ (as can easily be checked by
adapting the proof of \cite[Prop.~1]{Hei96}).

The original intention of the paper \cite{FH96} is in fact to efficiently calculate the $L_2$-star
discrepancies (\ref{heinrich}) of \emph{Smolyak quadrature rules}.  These quadrature rules are also known under different names as, e.g., \emph{sparse grid
methods} or \emph{hyperbolic cross points}, see, e.g., \cite{GG98, NR96, Smo63, Tem90, WW95, Zen91} and the literature mentioned therein.
Frank and Heinrich exploit that a $d$-dimensional Smolyak quadrature rule is uniquely determined by a sequence of one-dimensional quadratures,
and in the special case of composite quadrature rules even by a single one-dimensional quadrature. Their algorithm computes the $L_2$-star
discrepancies of Smolyak quadratures at cost $O(N\log^{2-d} N + d\log^4 N)$ for a general sequence of one-dimensional quadratures
and at cost $O(d\log^4 N)$ in the special case of composite quadrature rules; here $N$ denotes the number of quadrature points
used by the $d$-dimensional Smolyak quadrature. This time the implicit constants in the big-$O$-notation do not depend on the dimension $d$.
With the help of their algorithm Frank and Heinrich are able to calculate the $L_2$-star discrepancy for extremely large numbers of integration
points as, e.g., roughly $10^{35}$ points in dimension $d=15$. For the detailed description of the algorithm and numerical experiments we refer 
to \cite{FH96}.

Notice that both algorithms from \cite{FH96, Hei96} use as a starting point formula (\ref{heinrich}). Since the three summands appearing in (\ref{heinrich}) are of similar size, 
the algorithms should be executed with a sufficiently high precision to avoid cancellation effects.

\subsubsection*{Notes}
Related to the problem of calculating $L_2$-discrepancies of given point sets is the problem 
of computing the smallest possible $L_2$-discrepancy of all point sets of a given size $n$.
For the $L_2$-star discrepancy and arbitrary dimension 
$d$ the smallest possible discrepancy value of all point sets of size $n$ was derived in
\cite{PVC06} for $n=1$ and in \cite{LP07} for $n=2$.  

Regarding the $L_2$-star discrepancy, one should mention that this discrepancy can be 
a misleading measure of uniformity for sample sizes $n$ smaller than $2^d$. For instance, Matou\v{s}ek pointed out
that for small $n$ the pathological point set that consists of $n$ copies of the point
$(1,\ldots,1)$ in $[0,1]^d$ has almost the best possible $L_2$-star discrepancy,
see \cite[Sect.~2]{Mat98}. A possible remedy is to consider a weighted version of the
$L_2$-star discrepancy instead, as, e.g., the modified $L_2$-discrepancy.  

Matou\v{s}ek's observation may also be interpreted in the context of numerical integration. The $L_2$-star discrepancy
is equal to the worst-case error of quasi-Monte Carlo (QMC) integration on the unanchored Sobolev space. More precisely,
we have for a finite sequence $X=(x^{(i)})^n_{i=1}$ in $[0,1]^d$ that
\begin{equation*}
 d^*_2(X) = \sup_{f \in B} \left| \int_{[0,1]^d} f(x) \,{\rm d}x - Q(f) \right|,
\end{equation*}
where $B$ is the norm unit ball of the unanchored Sobolev space and $Q$ is the QMC algorithm
\begin{equation*}
 Q(f) = \frac{1}{n} \sum^n_{i=1} f(x^{(i)});
\end{equation*}
see, e.g., \cite[Chapter 9]{NW10}.
Now Matou\v{s}ek's observation indicates that if for given $n$ smaller than $2^d$ one is interested in minimizing the worst-case 
integration error with the help
of a general quadrature rule of the form (\ref{quadrature}),
then one should not use QMC algorithms with equal integration weights $1/n$.
In fact, already normalized QMC algorithms with suitably chosen equal integration weights $a = a(n,d) < 1/n$ as stated in 
\cite[(10.12)]{NW10} improve over conventional QMC algorithms
with weights $1/n$; for a detailed discussion see \cite[Sect.~10.7.6]{NW10}. This suggests that for $n$ smaller than $2^d$ the 
$L_2$-star discrepancy modified by substituting the factor $1/n$ by $a < 1/n$ from \cite[(10.12)]{NW10} may be a better measure of uniformity
than the $L_2$-star discrepancy itself. 

\section{Calculation of $L_\infty$-Discrepancies}
\label{L_INFTY}

In this section we survey algorithms which can be used to calculate or approximate the $L_\infty$-star discrepancy. 
Most of these algorithms have a straightforward extension to other ``$L_\infty$-rectangle discrepancies'', as, e.g., to the
extreme discrepancy discussed above, the centered 
discrepancy \cite{Hic98}, or other quadrant discrepancies \cite{HSW04, NW10}.
Algorithms for the $L_\infty$-star discrepancy are also necessary to compute or estimate weighted $L_\infty$-star discrepancies. Let us, e.g., assume that we are interested in 
finding tight upper or lower bounds for the weighted $L_\infty$-discrepancy $d^*_{\infty, \gamma}(X)$, as defined in (\ref{weighted_disc}).
Then we may divide the family of weights $(\gamma_u)_{u\subseteq\{1,\ldots,d\}}$ into a set $S$ of suitably small weights and  a set $L$ of larger weights  and
use the fact that the star discrepancy has the following monotonicity behavior with respect to the dimension: If $u\subseteq v$, then 
$d^*_\infty(X_u) \le d^*_\infty(X_v)$. 
We can use the algorithms discussed below to calculate or bound the discrepancies $d^*_{\infty}(X_u)$, $u\in L$. The remaining
discrepancies $d^*_{\infty}(X_v)$, $v\in S$, corresponding to the less important weights can be upper-bounded simply by $1$ and lower-bounded by
$\max_{u\in L\,;\, u\subset v} d^*_{\infty}(X_u)$ 
(or even by $0$ if the weights are negligible small).

In general, it is not easy to calculate $L_\infty$-discrepancies as defined in (\ref{l_infty}); the cardinality of the system $\mathcal{B}$ of test sets 
is typically infinite. Since we obviously cannot compute the local discrepancies for an infinite number of test boxes, we usually have to find a finite subset 
$\mathcal{B}_{\delta} \subset \mathcal{B}$ such that $\disc_\infty(\mathcal{B}; \mu, \nu) = \disc_\infty(\mathcal{B}_{\delta}; \mu, \nu)$ or at least
$\disc_\infty(\mathcal{B}; \mu, \nu) \le \disc_\infty(\mathcal{B}_{\delta}; \mu, \nu) + \delta$ for sufficiently small $\delta$. (This ``discretization method''
is also important for finding upper bounds for the best possible discrepancy behavior with the help of \emph{probabilistic proofs}, see, e.g., \cite{Bec84} or 
the book chapter \cite{Gne12b}.)
For most systems  $\mathcal{B}$ of test sets  this is not a trivial task. If one is, for instance, interested in the isotrope discrepancy, see (\ref{isotrope}),
it is not completely 
obvious to see how the system $\II_d$ can be substituted by a finite set system that leads to a (arbitrarily) close approximation of
$\disc_\infty(\II_d; \lambda^d, \nu_X)$. In \cite{Nie72a} H.~Niederreiter pointed out that it is sufficient to consider the smaller system of test sets
$\mathcal{E}_d$ of all open and closed polytopes $P$ contained in $[0,1]^d$ with the property that each face of $P$ is lying entirely on the boundary
of $[0,1]^d$ or contains a point of $X$. Note that $\mathcal{E}_d$ still consists of infinitely many test sets and that further work has to be done
before this observation can be used for a concrete algorithm to approximate the isotrope discrepancy.

For the star discrepancy it is easier to find useful discretizations, as we will show below.


\subsection{Calculating the Star Discrepancy in Low Dimension}
\label{EAC}

Let us have a 
closer look at the problem of calculating the $L_\infty$-star discrepancy: 
Let $X=(x^{(i)})^n_{i=1}$ be some fixed finite sequence in $[0,1)^d$. 
For convenience we introduce for an arbitrary point $y\in [0,1]^d$ the short-hands 
\begin{equation*}
 V_y := \prod^d_{i=1} y_i,
\end{equation*}
and 
\begin{equation*}
 A(y, X) : = \sum^n_{i=1} 1_{[0,y)}(x^{(i)}),
\hspace{3ex}\text{as well as}\hspace{3ex}
\overline{A}(y, X) : = \sum^n_{i=1} 1_{[0,y]}(x^{(i)}),
\end{equation*}
i.e., $V_y$ is the volume of the test box $[0,y)$, $A(y,X)$ the number
of points of $X$ lying inside the half-open box $[0,y)$, and
$\overline{A}(y,X)$ the number of points of $X$ lying in the
closed box $[0,y]$. Let us furthermore set 
\begin{equation*}
\delta(y,X) := V_y - \frac{1}{n}A(y,X) 
\hspace{3ex}\text{and}\hspace{3ex}
\overline{\delta}(y,X) := \frac{1}{n} \overline{A}(y,X) -V_y.
\end{equation*}
Putting $\delta^*(y,X) := \max\{ \delta(y,X), \overline{\delta}(y,X)\}$,
we have
\begin{equation*}
 d^*_\infty(X) = \sup_{y\in [0,1]^d} \delta^*(y,X).
\end{equation*}
We define for $j\in\{1,\ldots,d\}$
\begin{equation*}
\Gamma_j(X) := \{x^{(i)}_j \,| \, i \in \{1,...,n \} \}
\hspace{2ex}\text{and}\hspace{2ex}
\overline{\Gamma}_j(X) := \Gamma_j(X) \cup \{1\},
\end{equation*}
and put
\begin{equation*}
\Gamma(X) := \Gamma_1(X) \times \cdots \times \Gamma_d(X)
\hspace{2ex}\text{and}\hspace{2ex}
\overline{\Gamma}(X) := \overline{\Gamma}_1(X) \times \cdots \times \overline{\Gamma}_d(X).
\end{equation*}
We refer to $\Gamma(X)$ and to $\overline{\Gamma}(X)$ as \emph{grids induced by} $X$.

\begin{lemma}\label{Disfor}
Let $X=(x^{(i)})^n_{i=1}$ be a sequence in $[0,1)^d$. Then
\begin{equation}
\label{disfor}
d^*_\infty(X) = \max \ \left\{ \ \max_{y \in \overline{\Gamma}(X)} 
\delta(y,X)
\ ,\ \max_{y \in \Gamma(X)} \overline{\delta}(y,X) 
\right\}.
\end{equation}
\end{lemma}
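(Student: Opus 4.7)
The plan is to prove both inequalities in \eqref{disfor}. The easy direction $d^*_\infty(X) \geq$ RHS is immediate: $\overline{\Gamma}(X),\Gamma(X) \subseteq [0,1]^d$, and both $\delta(y,X)$ and $\overline{\delta}(y,X)$ are bounded above by $\delta^*(y,X)$, so the right-hand maximum is at most $\sup_{y \in [0,1]^d} \delta^*(y,X) = d^*_\infty(X)$. I will dispatch this in a single line. The substance lies in the other direction, where I fix an arbitrary $y \in [0,1]^d$ and have to promote $\delta^*(y,X)$ to an evaluation on one of the two grids.

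The main tool will be a coordinatewise snapping argument, applied in opposite directions to the two terms in $\delta^*$. For $\delta(y,X) = V_y - \tfrac{1}{n} A(y,X)$ I will round $y$ \emph{up} to the grid by setting $y'_j := \min\{z \in \overline{\Gamma}_j(X) : z \geq y_j\}$, which is well-defined because $1 \in \overline{\Gamma}_j(X)$. Then $y' \in \overline{\Gamma}(X)$, $V_{y'} \geq V_y$, and by minimality of $y'_j$ no sample coordinate $x^{(i)}_j$ can fall in the half-open interval $[y_j, y'_j)$; consequently $A(y',X) = A(y,X)$, and combining the two observations yields $\delta(y',X) \geq \delta(y,X)$. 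For $\overline{\delta}(y,X) = \tfrac{1}{n}\overline{A}(y,X) - V_y$ I will do the symmetric thing and round $y$ \emph{down}, setting $y''_j := \max\{z \in \Gamma_j(X) : z \leq y_j\}$; then $y'' \in \Gamma(X)$, $V_{y''} \leq V_y$, and maximality of $y''_j$ ensures $\overline{A}(y'',X) = \overline{A}(y,X)$, giving $\overline{\delta}(y'',X) \geq \overline{\delta}(y,X)$.

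The one obstacle worth flagging is the edge case in the downward-rounding step: when $y_j < \min \Gamma_j(X)$ for some $j$, the set $\Gamma_j(X) \cap [0,y_j]$ is empty and $y''_j$ is undefined. I plan to dispose of this separately: in that situation $[0,y]$ is disjoint from $X$, so $\overline{A}(y,X) = 0$ and $\overline{\delta}(y,X) = -V_y \leq 0$, while the right-hand side of \eqref{disfor} is automatically non-negative because $(1,\ldots,1) \in \overline{\Gamma}(X)$ and $\delta((1,\ldots,1),X) = 1 - \tfrac{1}{n}A((1,\ldots,1),X) \geq 0$. Hence such a $y$ is trivially dominated by the right-hand side, and the proof closes.
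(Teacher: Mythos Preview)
Your proof is correct and follows the same snapping argument as the paper: round each coordinate up to $\overline{\Gamma}_j(X)$ for the $\delta$-term and down to $\Gamma_j(X)$ for the $\overline{\delta}$-term, with the degenerate downward case handled separately (the paper allows $x_j=0$ as a sentinel value rather than your non-negativity argument, but the effect is identical). The only other wrinkle is that you treat the inequality $\text{RHS}\le d^*_\infty(X)$ as immediate from the identity $d^*_\infty(X)=\sup_y \delta^*(y,X)$, whereas the paper actually supplies an $\varepsilon$-limit argument to bound the $\overline{\delta}$-part (since $\overline{\delta}$ involves closed boxes, not the half-open ones in the original definition); since the paper states that identity as a fact just before the lemma, your shortcut is legitimate.
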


Formulas similar to (\ref{disfor}) can be found in several places
in the literature---the first reference we are aware of is 
\cite[Thm.~2]{Nie72a}.

\begin{proof}
Consider an
arbitrary test box $[0,y)$, $y\in [0,1]^d$, see Figure~\ref{Fig_1}.
\begin{figure}
	\begin{center}
	\begin{tikzpicture}
		\def\s{6}; 
		\draw (0,0) -- (\s,0) -- (\s,\s) -- (0,\s) -- (0,0);
		\draw[style=dashed] (0.755*\s,0) -- (0.755*\s,0.776*\s) -- (0,0.776*\s);
		\draw (0.163*\s,0) -- (0.163*\s,\s);
		\draw (0.296*\s,0) -- (0.296*\s,\s);
		\draw (0.490*\s,0) -- (0.490*\s,\s);
		\draw (0.714*\s,0) -- (0.714*\s,\s);
		\draw (0.878*\s,0) -- (0.878*\s,\s);
		\draw (0,0.163*\s) -- (\s,0.163*\s);
		\draw (0,0.398*\s) -- (\s,0.398*\s);
		\draw (0,0.622*\s) -- (\s,0.622*\s);
		\draw (0,0.816*\s) -- (\s,0.816*\s);
		\draw (0,0.955*\s) -- (\s,0.955*\s);
		\draw[fill=black] (0.163*\s,0.816*\s) circle (0.05);
		\draw[fill=black] (0.296*\s,0.163*\s) circle (0.05);
		\draw[fill=black] (0.490*\s,0.398*\s) circle (0.05);
		\draw[fill=black] (0.714*\s,0.955*\s) circle (0.05);
		\draw[fill=black] (0.878*\s,0.622*\s) circle (0.05);
		\draw (0.714*\s,0.622*\s) circle (0.05);
		\draw (0.878*\s,0.816*\s) circle (0.05);
		\draw (-0.2,-0.2) node {0};
		\draw (\s,-0.2) node {1};
		\draw (-0.2,\s) node {1};
		\draw (0.755*\s+0.2,0.776*\s) node {$y$};
		\draw (0.714*\s-0.2,0.622*\s-0.2) node {$x$};
		\draw (0.878*\s+0.2,0.816*\s+0.2) node {$z$};
		\draw (0.163*\s+0.4,0.816*\s+0.3) node {$x^{(1)}$};
		\draw (0.296*\s+0.4,0.163*\s+0.3) node {$x^{(2)}$};
		\draw (0.490*\s+0.4,0.398*\s+0.3) node {$x^{(3)}$};
		\draw (0.714*\s-0.4,0.955*\s-0.3) node {$x^{(4)}$};
		\draw (0.878*\s+0.4,0.622*\s+0.3) node {$x^{(5)}$};
	\end{tikzpicture}
	\end{center}
\caption{\label{Fig_1}Some set $X=(x^{(i)})^5_{i=1}$ in $[0,1)^2$, a test
box $[0,y)$, and $x\in \Gamma(X)$, $z\in \overline{\Gamma}(X)$ with $x\le y\le z$.}
\end{figure}
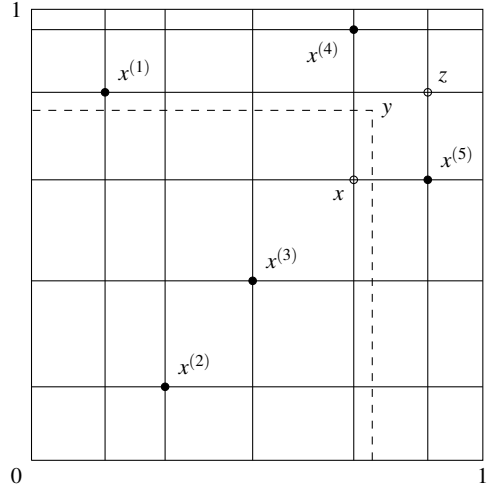

Then for every 
$j\in \{1,\ldots,d\}$
we find a maximal $x_j\in\Gamma_j(X)\cup \{0\}$ and a minimal
$z_j\in\overline{\Gamma}(X)$ satisfying $x_j\le y_j\le z_j$. 
Put $x=(x_1,\ldots,x_d)$ and $z=(z_1,\ldots,z_d)$.
We get the inequalities
\begin{equation*}
V_y -\frac{1}{n} A(y,X) = V_y -\frac{1}{n} A(z,X) 
\le V_z -\frac{1}{n} A(z,X),
\end{equation*}
and 
\begin{equation*}
\frac{1}{n} A(y,X) - V_y = \frac{1}{n}\overline{A}(x,X) - V_y 
\le \frac{1}{n}\overline{A}(x,X) - V_x.
\end{equation*}
Observing that $\overline{A}(x,X) = 0$ if $x_j=0\notin \Gamma_j(X)$ for some 
$j\in \{1,\ldots,d\}$, we see that the right hand side of (\ref{disfor}) is at least as large as $d^*_\infty(X)$. 

Let us now show that it cannot be larger than $d^*_\infty(X)$. So let
$y\in\Gamma(X)$ be given. Then we may consider for a small 
$\varepsilon>0$ the vector $y(\varepsilon)$, defined
by $y(\varepsilon)_j = \min\{y_j+\varepsilon, 1\}$ for 
$j=1,\ldots,d$. Clearly, $y(\varepsilon)\in [0,1]^d$ and
\begin{equation*}
\lim_{\varepsilon\to 0} \left( \frac{1}{n}
 A(y(\varepsilon),X) - V_{y(\varepsilon)} \right)
= \frac{1}{n} \overline{A}(y,X) - V_y.
\end{equation*} 
These arguments show that (\ref{disfor}) is valid.
\qed
\end{proof}

Lemma~\ref{Disfor} shows that an enumeration algorithm would provide us with the exact 
value of $d^*_\infty(X)$. But since the cardinality of $\Gamma(X)$ for
almost all $X$ is $n^d$, such an algorithm would
be infeasible for large values of $n$ and $d$. 
Indeed, for a random $n$-point set $X$ we have
almost surely $|\Gamma(X)|=n^d$, resulting in $\Omega(n^d)$
test boxes that we have to take into account to calculate (\ref{disfor}).
This underlines that $(\ref{disfor})$ is in general impractical if 
$n$ and $d$ are large. There are some more sophisticated methods
known to calculate the star discrepancy, which are especially 
helpful in low dimensions. In the one-dimensional case H.~Niederreiter derived
the following formula, see \cite{Nie72a} or \cite{Nie72b}.

\begin{theorem}[{\cite[Thm.~1]{Nie72a}}]
Let $X=(x^{(i)})^n_{i=1}$ be a sequence in $[0,1)$. If
$x^{(1)}\le x^{(2)} \le \cdots \le x^{(n)}$, then 
\begin{equation}
\label{nie}
d^*_\infty(X) = \max^n_{i=1} \max \left\{
\frac{i}{n}-x^{(i)} \,,\, x^{(i)} - \frac{i-1}{n} \right\} 
= \frac{1}{2n} + \max^n_{i=1} \left| x^{(i)} - \frac{2i-1}{2n} \right|.
\end{equation}
\end{theorem}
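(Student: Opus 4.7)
The plan is to specialize Lemma~\ref{Disfor} to dimension one and carry out the resulting maximizations explicitly. In dimension $1$ the volume function reduces to $V_y = y$, and for a sorted sequence $x^{(1)} \le \cdots \le x^{(n)}$ the point-counting functions become trivial: $\Gamma(X) = \{x^{(1)},\ldots,x^{(n)}\}$ and $\overline{\Gamma}(X) = \Gamma(X) \cup \{1\}$. Assuming first that the points are distinct, evaluating the counts at $y = x^{(i)}$ gives $A(x^{(i)}, X) = i-1$ (the strictly smaller points) and $\overline{A}(x^{(i)}, X) = i$ (the points $\le x^{(i)}$). At the extra grid point $y = 1$, we have $A(1, X) = n$, which contributes $1 - 1 = 0$ and is thus dominated.

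Substituting these values into (\ref{disfor}), I would obtain
\begin{equation*}
\max_{y \in \overline{\Gamma}(X)} \delta(y, X) = \max_{i=1}^n \left( \frac{i}{n} - x^{(i)} \right)
\quad\text{and}\quad
\max_{y \in \Gamma(X)} \overline{\delta}(y, X) = \max_{i=1}^n \left( x^{(i)} - \frac{i-1}{n} \right),
\end{equation*}
and combining both maxima yields the first claimed equality. To pass from the first to the second equality, I would use the elementary identity
\begin{equation*}
\max\{a - t, t - b\} = \tfrac{1}{2}(a - b) + \bigl| t - \tfrac{1}{2}(a+b) \bigr|
\quad \text{for } a \ge b,
\end{equation*}
applied with $a = i/n$, $b = (i-1)/n$, $t = x^{(i)}$, so that $\tfrac{1}{2}(a-b) = \tfrac{1}{2n}$ and $\tfrac{1}{2}(a+b) = \tfrac{2i-1}{2n}$. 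Taking the maximum over $i$ then produces the second form.

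The only genuine subtlety—and the step I would treat most carefully—is handling possible ties among the $x^{(i)}$, where my clean identities $A(x^{(i)},X) = i-1$ and $\overline{A}(x^{(i)},X) = i$ may fail for individual indices. The remedy is to observe that if $x^{(i)} = x^{(i+1)} = \cdots = x^{(i+k)}$, then the values of $\overline{A}$ and $A$ at this common point are $i+k$ and $i-1$ respectively, so among the candidate bounds $i/n - x^{(i)}, \ldots, (i+k)/n - x^{(i+k)}$ and $x^{(i)} - (i-1)/n, \ldots, x^{(i+k)} - (i+k-1)/n$, the maximum of each block still agrees with the maxima coming from $\overline{\delta}$ and $\delta$ at this grid point; the other indices in the tied block contribute values that are no larger, so their appearance in the formula is harmless. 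With this accounting in place, the reduction to Lemma~\ref{Disfor} is complete and the theorem follows.
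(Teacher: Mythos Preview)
Your approach is essentially the same as the paper's: apply Lemma~\ref{Disfor} in dimension one, observe that the grid point $y=1$ contributes $0$ to the $\delta$-maximum, and then use the elementary identity $\max\{a-t,t-b\} = \tfrac12(a-b) + |t - \tfrac12(a+b)|$ termwise to pass to the second form. The paper's proof is simply a terse two-line version of exactly this.

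One slip to fix: in your displayed equation you have swapped the two maxima. Since $\delta(y,X) = V_y - \tfrac1n A(y,X)$ and you correctly computed $A(x^{(i)},X)=i-1$, you get $\delta(x^{(i)},X) = x^{(i)} - \tfrac{i-1}{n}$, not $\tfrac{i}{n}-x^{(i)}$; likewise $\overline{\delta}(x^{(i)},X) = \tfrac{i}{n}-x^{(i)}$. The combined maximum is unaffected, so the argument still goes through, but the labels should be interchanged. Your treatment of ties is more careful than the paper's (which silently ignores the issue) and is correct.
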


\begin{proof}
The first identity follows directly from (\ref{disfor}), since for 
$y=1\in \overline{\Gamma}(X)$ we have $V_y - \frac{1}{n} A(y,X) =0$.
The second identity follows from
\begin{equation*}
 \max \left\{ \frac{i}{n} -x^{(i)}\,,\,
x^{(i)}-\frac{i-1}{n} \right\} 
= \left| x^{(i)} - \frac{2i-1}{2n} \right| + \frac{1}{2n},
\hspace{3ex}i=1,\ldots,n.
\end{equation*}
\qed
\end{proof}

Notice that (\ref{nie}) implies immediately that for $d=1$ the set 
$\{\frac{1}{2n}, \frac{3}{2n}, \ldots, \frac{2n-1}{2n} \}$ is the uniquely determined
$n$-point set that achieves the minimal star discrepancy $1/2n$.

In higher dimension the calculation of the star discrepancy unfortunately
becomes more complicated.

In dimension $d=2$ a reduction of the number of steps to calculate
(\ref{disfor}) was achieved by L.~De~Clerk \cite{DeC86}. In 
\cite{BZ93} her formula was slightly extended and simplified by
P.~Bundschuh and Y.~Zhu. 

\begin{theorem}[{\cite[Sect.~II]{DeC86}},{\cite[Thm. 1]{BZ93}}]
Let $X=(x^{(i)})^n_{i=1}$ be a sequence in $[0,1)^2$.
Assume that $x^{(1)}_1\le x^{(2)}_1 \le \cdots \le x^{(n)}_1$ and rearrange
for each $i\in \{1,\ldots,n\}$ the numbers
$0, x^{(1)}_2,\ldots, x^{(i)}_2,1$ in increasing order and rewrite them as
$0=\xi_{i,0} \le \xi_{i,1} \le \cdots \le \xi_{i,i} \le \xi_{i,i+1}
=1$. Then
\begin{equation}
 \label{dec}
d^*_\infty(X) = \max^n_{i=0} \max^i_{k=0} \max
\left\{  \frac{k}{n} -x^{(i)}_1\xi_{i,k} \,,\,
x^{(i+1)}_1\xi_{i,k+1} - \frac{k}{n} \right\}.
\end{equation}
\end{theorem}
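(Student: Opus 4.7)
The plan is to specialize Lemma~\ref{Disfor} to $d=2$ and exploit the sorted structure of the first coordinates. Lemma~\ref{Disfor} gives $d^*_\infty(X)$ as the larger of $\max_{y\in\overline{\Gamma}(X)}\delta(y,X)$ and $\max_{y\in\Gamma(X)}\overline{\delta}(y,X)$. Adopting the conventions $x^{(0)}_1:=0$ and $x^{(n+1)}_1:=1$, I will show that these two maxima are each attained on a smaller, structured collection of grid points indexed by pairs $(i,k)$ with $0\le k\le i\le n$, which directly yields the two expressions inside the max in (\ref{dec}).

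For the $\overline{\delta}$ term, I would fix a candidate $y=(y_1,y_2)\in\Gamma(X)$ with $y_1=x^{(i)}_1$, $i\in\{1,\ldots,n\}$. By the sorting of first coordinates, the points with $x^{(j)}_1\le x^{(i)}_1$ are exactly $x^{(1)},\ldots,x^{(i)}$ (up to the usual care with tie classes), so $\overline{A}(y,X)$ depends only on how $y_2$ sits among the sorted second coordinates $\xi_{i,1}\le\cdots\le\xi_{i,i}$ of these $i$ points. If $\overline{A}(y,X)=k$, then decreasing $y_2$ to the smallest value still producing count $k$, namely $y_2=\xi_{i,k}$, can only increase $\overline{\delta}(y,X)=k/n-x^{(i)}_1 y_2$, since $V_y$ shrinks while the count is preserved. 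The resulting maximal contribution is $k/n-x^{(i)}_1\xi_{i,k}$, giving the first expression in (\ref{dec}).

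A symmetric argument handles $\delta(y,X)$ over $\overline{\Gamma}(X)$. Writing $y_1=x^{(i+1)}_1$ with $i\in\{0,\ldots,n\}$, the count $A(y,X)$ is determined by the position of $y_2$ relative to $\xi_{i,1},\ldots,\xi_{i,i}$; if this count is $k$, then $\delta(y,X)=x^{(i+1)}_1 y_2-k/n$ is maximized by pushing $y_2$ up to the largest value still compatible with count $k$, namely $y_2=\xi_{i,k+1}$. This produces the second expression $x^{(i+1)}_1\xi_{i,k+1}-k/n$. Taking the maximum over all admissible pairs $(i,k)$ and combining the two cases via Lemma~\ref{Disfor} yields (\ref{dec}).

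The main bookkeeping obstacle is handling ties among first or second coordinates, which I would address by choosing the maximal index in each tie class as the representative, so that the set of indices with $x^{(j)}_1\le x^{(i)}_1$ remains exactly $\{1,\ldots,i\}$ and the counts for $A$ and $\overline{A}$ stay correct. The degenerate boundary cases must also be checked: for $k=0$ the value $\xi_{i,0}=0$ lies outside $\Gamma_2(X)$ in general, but the corresponding term $k/n-x^{(i)}_1\xi_{i,k}=0$ is a harmless (non-positive limit) contribution to the maximum, and for $k=i$ in the second expression the appearance of $\xi_{i,i+1}=1\in\overline{\Gamma}_2(X)$ is consistent with Lemma~\ref{Disfor}.
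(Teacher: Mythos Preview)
Your argument is correct and follows essentially the same route as the paper's account: the paper explains that De~Clerk's derivation rests on the critical-point decomposition (\ref{discrit2}), i.e., on grouping grid points by their count $k$ and then characterizing, in dimension~$2$, the extremal $\overline{\delta}(X)$-critical point in $\overline{\mathcal{C}}^k(X)$ and the extremal $\delta(X)$-critical point in $\mathcal{C}^k(X)$ for each~$k$. Your proof does exactly this---fixing the first coordinate, fixing the count, and then pushing $y_2$ to the appropriate order statistic $\xi_{i,k}$ or $\xi_{i,k+1}$---only you start directly from Lemma~\ref{Disfor} rather than first passing through the critical-point machinery; the paper also notes that handling ties (your ``main bookkeeping obstacle'') is precisely what distinguishes the more involved argument of Bundschuh and Zhu from De~Clerk's original one.
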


The derivation of this formula is mainly based on the observation that in dimension
$d\ge 2$ the discrepancy functions $\delta(\cdot,X)$ and $\overline{\delta}(\cdot,X)$ can attain
their maxima only in some of the points  $y\in \overline{\Gamma}(X)$ and 
$y\in \Gamma(X)$, respectively, which we shall call \emph{critical points}. 
The formal definition we state here is
equivalent to the one given in \cite[Sect.~4.1]{GWW12}.

\begin{definition}
\label{def:critical}
 Let $y\in [0,1]^d$. The point $y$ and the test box $[0,y)$ are
\emph{$\delta(X)$-critical}, if we have for all $0 \neq \varepsilon\in 
[0,1-y_1]\times \cdots \times [0,1-y_d]$ that $A(y+\varepsilon, X) > A(y,X)$.
The point $y$ and the test box $[0,y]$ are
\emph{$\overline{\delta}(X)$-critical}, if we have for all $\varepsilon\in [0,y)\setminus
\{0\}$ that $\overline{A}(y-\varepsilon, X) < \overline{A}(y,X)$.
We denote the set of $\delta(X)$-critical points by $\mathcal{C}(X)$ and the set of
$\overline{\delta}(X)$-critical points by $\overline{\mathcal{C}}(X)$, and we put
$\mathcal{C}^*(X) := \mathcal{C}(X) \cup \overline{\mathcal{C}}(X)$.
\end{definition}
 
In Figure~\ref{Fig_1} we have, e.g., that $y=(x^{(5)}_1, x^{(4)}_2)$ and 
$y'=(x^{(5)}_1, 1)$ are $\delta(X)$-critical points, while
$y''=(x^{(5)}_1, x^{(3)}_2)$ is not.
Furthermore, $\overline{y}=(x^{(3)}_1, x^{(3)}_2)$ and 
$\overline{y}'=(x^{(5)}_1, x^{(1)}_2)$ are $\overline{\delta}(X)$-critical points,
while $\overline{y}''=(x^{(4)}_1, x^{(2)}_2)$ is not.
This shows that, in contrast to the one-dimensional situation, for $d\ge 2$ not all
points in $\Gamma(X)$ and $\overline{\Gamma}(X)$ are critical points.

With a similar argument as in the proof of Lemma~\ref{Disfor}, the following
lemma can be established.

\begin{lemma}
Let $X = (x^{(i)})_{i=1}^n$ be a sequence in $[0,1]^d$. Then $\mathcal{C}(X) \subseteq \overline{\Gamma}(X)$ and 
$\overline{\mathcal{C}}(X) \subseteq \Gamma(X)$, as well as 
\begin{equation}
 \label{discrit1}
d^*_\infty(X) = \max
\left\{ \max_{y\in \mathcal{C}(X)} \delta(y,X)\,,\, \max_{y\in\overline{\mathcal{C}}(X)} 
\overline{\delta}(y,X) \right\}.
\end{equation}
\end{lemma}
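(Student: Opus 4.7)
The plan is to separate the set-theoretic inclusions from the identity (\ref{discrit1}) and treat each in turn; the identity then follows from Lemma~\ref{Disfor} combined with an extremal-volume argument built on top of the inclusions.

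For the inclusions I would argue by contrapositive. If $y \in [0,1]^d$ has some coordinate $y_j < 1$ with $y_j \notin \Gamma_j(X)$, then for sufficiently small $\varepsilon_0 > 0$ the half-open interval $[y_j, y_j + \varepsilon_0)$ contains no value of $\Gamma_j(X)$; the nonzero perturbation supported only in coordinate $j$, with $\varepsilon_0$ there, lies in $[0,1-y_1] \times \cdots \times [0,1-y_d]$ and satisfies $A(y+\varepsilon, X) = A(y, X)$, certifying $y \notin \mathcal{C}(X)$. Thus $\mathcal{C}(X) \subseteq \overline{\Gamma}(X)$. The analogous contrapositive, shrinking rather than growing the $j$-th coordinate, yields $\overline{\mathcal{C}}(X) \subseteq \Gamma(X)$.

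For (\ref{discrit1}), the direction ``$\geq$'' is immediate from the inclusions and Lemma~\ref{Disfor}. For ``$\leq$'' I focus on the $\delta$-term (the $\overline{\delta}$-term is symmetric). Set $M := \max_{y \in \overline{\Gamma}(X)} \delta(y, X)$ and, among the maximizers, pick one $y^*$ with the \emph{largest} volume $V_{y^*}$. I claim $y^* \in \mathcal{C}(X)$. Suppose not: then there is a witness $\varepsilon \neq 0$ in the admissible range with $A(y^* + \varepsilon, X) = A(y^*, X)$. Applying the coordinate-wise upward rounding used in the proof of Lemma~\ref{Disfor} to $y^* + \varepsilon$ produces $z \in \overline{\Gamma}(X)$ with $z \geq y^* + \varepsilon$ and $A(z, X) = A(y^* + \varepsilon, X) = A(y^*, X)$, so
\begin{equation*}
\delta(z, X) \;=\; V_z - \tfrac{1}{n} A(y^*, X) \;\geq\; V_{y^*+\varepsilon} - \tfrac{1}{n} A(y^*, X) \;\geq\; V_{y^*} - \tfrac{1}{n} A(y^*, X) \;=\; M,
\end{equation*}
forcing equality throughout. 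When $M > 0$, positivity of $V_{y^*}$ forces every $y^*_k > 0$, so $V_{y^*+\varepsilon} > V_{y^*}$ and hence $V_z > V_{y^*}$, contradicting the maximality of $V_{y^*}$. The degenerate case $M \leq 0$ is dispatched by observing that $(1,\ldots,1) \in \mathcal{C}(X)$ is vacuously critical and gives $\delta((1,\ldots,1), X) = 0 \geq M$. The $\overline{\delta}$-term on $\Gamma(X)$ is handled symmetrically by picking a maximizer of \emph{smallest} volume and invoking the downward rounding from the proof of Lemma~\ref{Disfor}.

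The main obstacle I anticipate is the strict volume increase $V_{y^*+\varepsilon} > V_{y^*}$, which breaks if some coordinate of $y^*$ vanishes; splitting off the regime $M \leq 0$ (handled by the trivial critical point $(1,\ldots,1)$) from the main case $M > 0$ (where positivity of every $y^*_k$ is automatic) circumvents this cleanly. A related subtlety for the $\overline{\delta}$-case is that the downward rounding can land on a point with a zero coordinate lying outside $\Gamma(X)$, but any such point has $\overline{A} = 0$, hence $\overline{\delta} \leq 0$, and therefore cannot improve upon a positive maximum.
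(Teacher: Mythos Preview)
Your proof is correct and follows essentially the approach the paper indicates: the paper does not give a detailed proof but states that the lemma ``can be established with a similar argument as in the proof of Lemma~\ref{Disfor}'' and refers to \cite{DeC86} and \cite{GWW12}, and your argument---coordinate-wise rounding to the grid, combined with an extremal-volume selection among maximizers to force criticality---is precisely such an adaptation. Your explicit handling of the degenerate cases (vanishing coordinates, $M\le 0$, and downward rounding landing outside $\Gamma(X)$) goes beyond what the paper sketches and is sound; note in particular that $M<0$ cannot actually occur since $(1,\ldots,1)\in\overline{\Gamma}(X)$ gives $\delta((1,\ldots,1),X)=0$, so your case $M\le 0$ is really $M=0$.
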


For a rigorous proof see \cite[Sect.~II]{DeC86} or \cite[Lemma~4.1]{GWW12}.

The set of critical test boxes may be subdivided further. 
For $j=0,1,\ldots,n$ put 
\begin{equation*}
 \mathcal{C}^k(X) := \left\{ y\in \mathcal{C}(X) \,|\, A(y,X) = k \right\},
\end{equation*}
and 
\begin{equation*}
 \overline{\mathcal{C}}^k(X) := \left\{ y\in \overline{\mathcal{C}}(X) \,|\, 
\overline{A}(y,X) = k \right\}.
\end{equation*}
Then
\begin{equation}
 \label{discrit2}
d^*_\infty(X) = \max_{k=1}^n \max 
\left\{ \max_{y\in \mathcal{C}^{k-1}(X)} \delta(y,X)\,,\, \max_{y\in\overline{\mathcal{C}}^{k}(X)} 
\overline{\delta}(y,X) \right\},
\end{equation}
see \cite[Sect.~II]{DeC86}. For $d=2$ De~Clerk characterized the points in
$\mathcal{C}^k(X)$ and $\overline{\mathcal{C}}^k(X)$ and derived (\ref{dec})
under the assumption that $|\Gamma_j(X)|=n$ for $j=1,\ldots,d$ \cite[Sect.~II]{DeC86}.
Bundschuh and Zhu got rid of this assumption, which resulted in a technically more involved
proof \cite[Sect.~2]{BZ93}. 

De~Clerck used her formula (\ref{dec}) to provide explicit formulas for the discrepancies
of \emph{Hammersley point sets} \cite{Ham60} for arbitrary basis $b$ and $n=b^m$, $m\in \N$, see \cite[Sect.~III]{DeC86}. Her results generalize the formulas for the star discrepancy of two-dimensional Hammersley sets in basis $2$ provided by Halton and Zaremba \cite{HZ69}.
To establish the explicit formula, she used a recursion property of two-dimensional Hammersley point
sets $X$ of size $b^m$ \cite{Fau81} and the facts that these sets are symmetric with respect to the main diagonal of the unit square \cite{Pea82} and that their discrepancy function $\delta(\cdot,X)$ is never positive \cite{Gab63}, which implies that
for the calculation of the discrepancy of these sets one only has to consider $\overline{\delta}(X)$-critical test boxes.

In dimension $d=3$ Bundschuh and Zhu provided a formula similar to (\ref{dec}). 

\begin{theorem}[{\cite[Thm.~2]{BZ93}}]
Let $X=(x^{(i)})^n_{i=1}$ be a sequence in $[0,1)^3$.
Put $x^{(0)} := (0,0,0)$ and $x^{(n+1)}:= (1,1,1)$, 
and assume that $x^{(1)}_1\le x^{(2)}_1 \le \cdots \le x^{(n)}_1$. For $i\in \{1,\ldots,n\}$  rearrange
the second components
$x^{(0)}_2, x^{(1)}_2,\ldots, x^{(i)}_2, x^{(n+1)}_2$ in increasing order and rewrite them as
$0=\xi_{i,0} \le \xi_{i,1} \le \cdots \le \xi_{i,i} \le \xi_{i,i+1}
=1$ and denote the corresponding third components $x^{(i)}_3$, $i=0,1,\ldots, i, n+1$
by $\tilde{\xi}_{i,0}, \tilde{\xi}_{i,1},\ldots, \tilde{\xi}_{i,i+1}$. Now for fixed
$i$ and $k =0,1,\ldots,i$ rearrange $\tilde{\xi}_{i,0}, \tilde{\xi}_{i,1},\ldots, \tilde{\xi}_{i,k},\tilde{\xi}_{i,i+1}$ and rewrite them as
$0 = \eta_{i,k,0} \le \eta_{i,k,1} \le \cdots \le \eta_{i,k,k} \le \eta_{i,k,k+1} =1$.
Then 
\begin{equation}
 \label{buz}
d^*_\infty(X) = \max^n_{i=0} \max^i_{k=0} \max^k_{\ell=0} \max
\left\{  \frac{k}{n} -x^{(i)}_1\xi_{i,k}\eta_{i,k,\ell} \,,\,
x^{(i+1)}_1\xi_{i,k+1}\eta_{i,k,\ell+1} - \frac{k}{n} \right\}.
\end{equation}
\end{theorem}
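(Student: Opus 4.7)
The approach is to generalize the two-dimensional argument that yielded formula~(\ref{dec}) one coordinate at a time, using the characterization of critical test boxes as the only candidates at which the suprema in (\ref{disfor}) can be attained. The starting point is Lemma~\ref{Disfor} together with (\ref{discrit1}) and (\ref{discrit2}), which reduce the problem to maximizing $\delta(y,X)$ over $\mathcal{C}(X)\subseteq \overline{\Gamma}(X)$ and $\overline{\delta}(y,X)$ over $\overline{\mathcal{C}}(X)\subseteq \Gamma(X)$.

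First I would sort the points by first coordinate, introduce $x^{(0)}=(0,0,0)$ and $x^{(n+1)}=(1,1,1)$ as in the statement, and fix $i\in\{0,1,\ldots,n\}$. An argument analogous to the proof of Lemma~\ref{Disfor} shows that in every $\delta(X)$-critical test box the first coordinate may be chosen to be $y_1=x^{(i+1)}_1$, since pushing $y_1$ up to the next data coordinate strictly increases $V_y$ without changing $A(y,X)$; dually, in every $\overline{\delta}(X)$-critical test box we take $y_1=x^{(i)}_1$. This fixes the set of ``first-coordinate eligible'' points to be $x^{(1)},\ldots,x^{(i)}$.

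Second, with $y_1$ fixed, I would look at the second coordinates of the first-coordinate eligible points, together with the two dummy values $0$ and $1$ contributed by $x^{(0)}$ and $x^{(n+1)}$, to obtain the sorted sequence $0=\xi_{i,0}\le \cdots\le \xi_{i,i+1}=1$ (with the third coordinates $\tilde{\xi}_{i,\cdot}$ simply dragged along by the sort). A direct two-dimensional extremality argument then shows that for a critical box whose second-coordinate slab contains exactly $k$ of these points one must choose $y_2=\xi_{i,k+1}$ in the $\delta$ case or $y_2=\xi_{i,k}$ in the $\overline{\delta}$ case. Repeating the same argument in the third dimension with the $k$ selected points and the two dummies $x^{(0)}$, $x^{(n+1)}$ — which provide the sorted values $0=\eta_{i,k,0}\le \cdots \le \eta_{i,k,k+1}=1$ — yields the critical choices $y_3=\eta_{i,k,\ell+1}$ and $y_3=\eta_{i,k,\ell}$ in the two cases. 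Substituting these extremal coordinates into $V_y-A(y,X)/n$ and $\overline{A}(y,X)/n-V_y$ and taking the maximum over $i,k,\ell$ produces the two expressions displayed in~(\ref{buz}), where the counts $A(y,X)$ (respectively $\overline{A}(y,X)$) are read off directly from the construction.

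The main obstacle, as in Bundschuh and Zhu's original derivation and in contrast to De~Clerck's cleaner treatment, is to dispense with the simplifying assumption $|\Gamma_j(X)|=n$ for $j=1,2,3$. One has to verify that ties among the first, second, or third coordinates of the $x^{(j)}$ do not cause critical boxes to be missed by the enumeration. This is handled by showing that any ``degenerate'' critical box (where several coordinates coincide) is captured as a boundary case of the extremal choices above, using the limiting argument from the proof of Lemma~\ref{Disfor} in each coordinate separately; this bookkeeping is where the proof becomes technically heavy, even though the geometric picture remains the straightforward inductive extension of the two-dimensional case.
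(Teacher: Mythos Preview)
The paper does not actually prove this theorem; it only states the result with a citation to \cite{BZ93} and then remarks that ``the method can be generalized to arbitrary dimension $d$.'' What the paper does supply is the general framework---critical points (Definition~\ref{def:critical}) and identities (\ref{discrit1}) and (\ref{discrit2})---together with a description of how De~Clerck obtained the two-dimensional formula (\ref{dec}) and the comment that Bundschuh and Zhu removed her simplifying assumption $|\Gamma_j(X)|=n$ at the price of a ``technically more involved proof.'' Your proposal follows precisely that template, carrying the critical-box argument through one coordinate at a time and correctly flagging the tie-handling as the place where the bookkeeping becomes heavy. There is thus no separate proof in the paper to contrast with yours; your outline is exactly the method the paper alludes to.
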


The method can be generalized to arbitrary 
dimension $d$ and requires for generic
point sets roughly $O(n^d/d!)$ elementary operations.
This method was, e.g., used in \cite{WF97} to calculate the 
exact discrepancy of particular point sets, so-called rank-$1$ 
lattice rules (cf. \cite{DP10, Nie92, SJ94}), up to size $n=236$ in dimension
$d=5$ and to $n=92$ in dimension $d=6$.
But as pointed out by P.~Winker and K.-T.~Fang, for this method instances 
like, e.g., sets of 
size $n \ge 2000$ in $d=6$ are infeasible. This method can thus only be used
in a very limited number of dimensions. 

A method that calculates the exact star discrepancy of a point set in a running time
with a more favorable dependence on the dimension $d$, namely time $O(n^{1+d/2})$,
was proposed by D.~P.~Dobkin, D.~Eppstein, and D.~P.~Mitchell in \cite{DEM96}. 
We discuss this more elaborate algorithm in the next subsection. 

\subsection{The Algorithm of Dobkin, Eppstein, and Mitchell}
\label{DEM}

In order to describe the algorithm of Dobkin, Eppstein, and Mitchell~\cite{DEM96}, we
begin with a problem from computational geometry. In \emph{Klee's measure problem}, the
input is a set of $n$ axis-parallel rectangles in $\R^d$, and the problem is to compute
the volume of the union of the rectangles. The question of the best possible running time
started with V.~Klee~\cite{Klee77}, who gave an $O(n\log n)$-time algorithm for the
1-dimensional case, and asked whether this was optimal; it was later found that this is
the case~\cite{FW78}. The general case was considered by J.~L.~Bentley~\cite{Bentley77}, who 
gave an $O(n^{d-1}\log n)$-time algorithm for $d \geq 2$, thus (surprisingly) giving an
algorithm for the $d=2$ case with an asymptotic running time matching that of the $d=1$
case. 
By the lower bound for $d=1$, Bentley's algorithm is tight for $d=2$, but as we shall see,
not for $d \geq 3$. 

The essential breakthrough, which also lies behind the result of~\cite{DEM96}, was given
by Overmars and Yap~\cite{OY91}. They showed that~$\R^d$ can be partitioned (in an
input-dependent way) into~$O(n^{d/2})$ regions, where each region is an axis-parallel box,
such that the intersection of the region with the rectangles 
of the input behaves in a
particular regular way. 
Let us fix some terminology. Let $C=[a,b]$ be a region in the decomposition. 
A \emph{slab} in $C$ is an axis-parallel box contained in $C$ which has full length in all
but at most one dimension, i.e., a box $\prod_{j=1}^d I_j$ where $I_j=[a_j,b_j]$ for all
but at most one~$j$.
A finite union of slabs is called a \emph{trellis}. Overmars and Yap show the following
result. 

\begin{theorem}[\cite{OY91}]
Let~$d$ be a fixed dimension. Given a set of~$n$ rectangles in~$\R^d$, there is a
partitioning of the space into~$O(n^{d/2})$ regions where for every region, the
intersection of the region with the union of all rectangles of the input forms a trellis.
\end{theorem}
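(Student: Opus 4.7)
My plan is to construct the partition by induction on the dimension $d$, using a square-root decomposition along one axis as the central balancing tool. The idea is that after cutting along axis $d$ into $\Theta(\sqrt n)$ slabs, most rectangles span each slab fully in axis $d$ and are therefore effectively $(d-1)$-dimensional, while only $O(\sqrt n)$ rectangles have endpoints inside each slab and require genuine $d$-dimensional treatment.

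\emph{Base case} $d=2$. Partition $\R^2$ by vertical lines through the $x$-coordinates of all rectangle boundaries, giving $O(n)=O(n^{2/2})$ vertical strips. In each strip every input rectangle either misses the strip or spans it in the $x$-direction, so its intersection with the strip is an axis-parallel box of full $x$-extent—a slab of the strip. The union of these slabs is a trellis.

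\emph{Inductive step} $d\geq 3$. Project all rectangle boundaries onto the $d$-th axis, obtaining at most $2n$ values, and sort them. Group these into $\lceil\sqrt n\rceil$ consecutive groups of $O(\sqrt n)$ values each, producing $O(\sqrt n)$ \emph{super-slabs} $S_1,\ldots,S_k$ along the $d$-th axis. Inside a super-slab $S_i$ call a rectangle \emph{long} if it spans $S_i$ in axis $d$, so that $R\cap S_i = R' \times S_i[d]$ for some $(d-1)$-dimensional box $R'$, and \emph{short} otherwise; by construction each $S_i$ contains at most $O(\sqrt n)$ short rectangles. The long rectangles are effectively $(d-1)$-dimensional through their projections $R'$; applying the induction hypothesis in dimension $d-1$ to these projections yields a partition of the cross-section into $O(n^{(d-1)/2})$ cells in which the long-rectangle projections form a $(d-1)$-dim trellis. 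Each such cell, extended by $S_i[d]$, is a $d$-dim region in which the long rectangles appear as $d$-dim slabs (full extent in axis $d$, and a slab of the cross-section in the remaining axes). The short rectangles are handled by a further subdivision inside $S_i$ that uses only the $O(\sqrt n)$ short rectangles; a careful accounting shows that this contributes at most a constant-factor blow-up compared to the $O(n^{(d-1)/2})$ cells already produced for the long rectangles.

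Summing over the $O(\sqrt n)$ super-slabs yields $O(\sqrt n)\cdot O(n^{(d-1)/2}) = O(n^{d/2})$ regions, as required.

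The main obstacle is exactly the last step: combining the $(d{-}1)$-dim partition for the long rectangles with the additional refinement for the short rectangles without multiplying their cell counts. Naively taking the common refinement of the two partitions would give $O(n^{(d-1)/2}) \cdot T(\sqrt n, d)$ cells per super-slab, which is too large; the key technical content of Overmars and Yap's argument is to arrange the recursion so that the short rectangles' subproblem is absorbed into the $(d{-}1)$-dim analysis (exploiting that their number is already down to $O(\sqrt n)$) and so contributes only a lower-order term. Once this balancing is established, the $O(n^{d/2})$ bound falls out of the per-super-slab estimate multiplied by the number of super-slabs.
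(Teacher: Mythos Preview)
Your proposal correctly identifies the square-root balancing idea and the long/short dichotomy, but as you yourself note, the combination step is where the argument stalls. You have not actually proposed a mechanism for ``absorbing'' the short rectangles, and the naive common refinement does blow up: with $O(\sqrt n)$ short rectangles in a super-slab, a recursive call on them in $d$ dimensions gives $O((\sqrt n)^{d/2})=O(n^{d/4})$ cells, and intersecting with the $O(n^{(d-1)/2})$ cells for the long rectangles yields $O(n^{(3d-2)/4})$ cells per super-slab, which exceeds $O(n^{d/2})$ for $d\ge 3$. So the plan is incomplete at exactly its critical step, and ``a careful accounting shows\ldots'' is not yet an argument.

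The paper (following Overmars--Yap, described there in the dualized point setting) takes a structurally different route that avoids the long/short split altogether. Rather than inducting on the ambient dimension, it processes the coordinates $1,\ldots,d$ sequentially: a level-$i$ region is a box $[a_1,b_1]\times\cdots\times[a_i,b_i]\times[0,1]^{d-i}$, and each such region is subdivided along coordinate $i{+}1$ into $O(\sqrt n)$ level-$(i{+}1)$ regions. Two invariants are maintained: (1) every object is \emph{internal} (strictly between $a_j$ and $b_j$) in at most one already-processed coordinate; (2) in each region and each processed coordinate, at most $O(\sqrt n)$ objects are internal. When subdividing along coordinate $i{+}1$, the cut positions are chosen to include the $(i{+}1)$-th coordinate of every currently internal object (preserving invariant (1)), together with enough extra cuts so that each new slab contains at most $O(\sqrt n)$ objects internal in coordinate $i{+}1$ (preserving invariant (2)). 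Since $d$ is fixed, the total number of internal objects in a region is $O(d\sqrt n)=O(\sqrt n)$, so $O(\sqrt n)$ cuts suffice at every level, and one ends with $O(\sqrt n)^d=O(n^{d/2})$ cells. Invariant (1) at level $d$ is precisely the trellis property.

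The key difference from your plan is that there is no separate ``short-rectangle subproblem'' to recurse on in dimension $d$; the short objects are absorbed coordinate by coordinate via the invariant that their number stays $O(\sqrt n)$ per coordinate per region. That invariant, and the sequential-coordinate recursion that sustains it, is the missing idea in your sketch.
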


An algorithm to enumerate this partition, and a polynomial-time algorithm
for computing the volume of a trellis, would now combine into an~$O(n^{d/2+c})$-time
algorithm for Klee's measure problem, for some constant~$c>0$; Overmars and Yap further
improve it to $O(n^{d/2}\log n)$ time (and~$O(n)$ space) by using dynamic data structures and
careful partial evaluations of the decomposition. Recently, T.~M.~Chan~\cite{Chan10} gave a
slightly improved running time of $n^{d/2}2^{O(\log^* n)}$, where~$\log^* n$ denotes the
iterated logarithm of~$n$, i.e., the number of times the logarithm function must be
iteratively applied before the result is less than or equal to 1. 

While no direct reductions between the measure problem and discrepancy computation are
known, the principle behind the above decomposition is still useful. 
Dobkin et al.~\cite{DEM96} apply it via a dualization, turning each point $x$ into an
orthant $(x,\infty)$,
and each box $B=[0,y)$ into a point $y$,
so that $x$ is contained
in $B$ if and only if the point $y$ is contained in the orthant. The problem of star
discrepancy computation can then be solved by finding, for each $i \in \{1,\ldots,n\}$,
the ``largest'' and ``smallest'' point $y \in \overline{\Gamma}(X)$ contained in at most respectively at least $i$
orthants; here largest and smallest refer to the value~$V_y$ defined previously, 
i.e., the volume of the box $[0,y)$. Note that this is the dual problem to (\ref{discrit2}).
As Dobkin et al.~\cite{DEM96} show, the partitioning of~$\R^d$
of Overmars and Yap can be used for this purpose. In particular, the base case of a region
$B$ such that the intersections of the input rectangles with $B$ form a trellis, 
corresponds
for us to a case where for every point $x$ there is at most one coordinate $j \in \{1,\ldots,d\}$
such that for any $y \in B$ only the value of $y_j$ determines whether $x \in [0,y)$.
Given such a base case, it is not difficult to compute the maximum discrepancy relative to
points~$y\in B$ in polynomial time. 

We now sketch the algorithm in some more detail, circumventing the dualization step for a
more direct presentation. For simplicity of presentation, we are assuming that for each
fixed $j \in \{1,\ldots,d\}$, the coordinates $x_j^{(1)}, \ldots, x_j^{(n)}$ are pairwise
different. We will also ignore slight issues with points that lie exactly on a
region boundary. 

We will subdivide the space $[0,1]^d$ recursively into regions of the form $[a_1,b_1]
\times \ldots \times [a_i,b_i] \times [0,1]^{d-i}$; call this a \emph{region at level
  $i$}. We identify a region with $[a,b]$; if the region is at level $i<d$, then for $j>i$
we have $a_j=0$ and $b_j=1$. For $j \in \{1,\ldots,i\}$, we say that a point $x$ is
\emph{internal in dimension $j$}, relative to a region $[a,b]$ at level $i$, if $x$ is
contained in $[0,b)$ and $a_j < x_j < b_j$. 
We will maintain two invariants as follows.
\begin{enumerate}
\item For every point $x$ contained in the box $[0,b)$, there is at most one coordinate 
$j \in \{1,\ldots,i\}$ such that $x$ is internal in dimension $j$. 
\item For any region at level $i>0$ and any $j \in \{1,\ldots,i\}$ there are only
$O(\sqrt n)$ points internal in dimension $j$. 
\end{enumerate}
Once we reach a region at level $d$, called a \emph{cell}, the first condition ensures 
that we reach the above-described base case, i.e., that every point contained in
the box $[0,b)$ is internal in at most one dimension.
The second condition, as we will see, ensures that the decomposition can be performed
while creating only $O(n^{d/2})$ cells.


The process goes as follows. We will describe a recursive procedure, subdividing each
region at level $i<d$ into $O(\sqrt{n})$ regions at level $i+1$, ensuring
$O(\sqrt{n}^d)=O(n^{d/2})$ cells in the final decomposition. 
In the process, if $[a,b]$ is the region currently being subdivided, we let $X_b$ be the set 
of points contained in the box $[0,b)$, and $X_I$ the subset of those points which are
internal in some dimension $j\leq i$. 
We initialize the process with the region $[0,1]^d$ at level 0, with $X_b$ being the full
set of points of the input, and $X_I=\emptyset$. 

Given a region $[a,b]$ at level $i<d$, with $X_b$ and $X_I$ as described, we will 
partition along dimension $i+1$ into segments $[\xi_j,\xi_{j+1}]$, for some $\xi_j$,
$j\in \{1, \ldots, \ell\}$, where~$\ell$ is the number of subdivisions. 
Concretely, for each $j \in \{1,\ldots,\ell-1\}$
we generate a region $[a^{(j)},b^{(j)}]=[a_1,b_1] \times \ldots \times [a_i,b_i] \times
[\xi_j,\xi_{j+1}] \times [0,1]^{d-i-1}$ at level $i+1$, set $X_b^{(j)}=X_b \cap [0,b^{(j)})$
and $X_I^{(j)} = (X_I \cap [0,b^{(j)})) \cup \{x \in X_b^{(j)}: \xi_{j} < x_{i+1} < \xi_{j+1}\}$,
and recursively process this region, with point set $X_b^{(j)}$ and internal points $X_I^{(j)}$. 
Observe that the points added to $X_I^{(j)}$ indeed are internal points, in dimension $i+1$.
The coordinates $\xi_j$ are chosen to fulfill the following conditions.
\begin{eqnarray}
\text{For all}&&\!\!\!\! x \in X_I, \text{ we have } x_{i+1} \in \{\xi_1,\ldots, \xi_\ell\}. \label{eqn:prop1}
\\ 
\text{For all}&&\!\!\!\! j \in \{1,\ldots,\ell-1\}, \text{ we have } |\{x \in X_b: \xi_j < x_{i+1} < \xi_{j+1}\}|
= O(\sqrt{n}).\,
\label{eqn:prop2}
\end{eqnarray}
We briefly argue that this is necessary and sufficient to maintain our two invariants
while ensuring that~$\ell=O(\sqrt{n})$ at every level. Indeed, if condition (\ref{eqn:prop1})
is not fulfilled, then the point $x$ is internal in two dimensions in some region 
$[a^{(j)},b^{(j)}]$, 
and if condition (\ref{eqn:prop2}) is not fulfilled for some $j$,
then $|X_I^{(j)}|$ is larger than
$O(\sqrt{n})$ in the same region. 
To create a set of coordinates $\xi_j$ fulfilling these conditions is not difficult;
simply begin with the coordinate set $\{x_{i+1}: x \in X_I\}$ to
fulfill~(\ref{eqn:prop1}), and insert additional 
coordinates as needed to satisfy~(\ref{eqn:prop2}). This requires at most
$|X_I|+(n/\sqrt{n})$ coordinates; thus $\ell=O(\sqrt n)$ as required, and one finds
inductively that both invariants are maintained at every region created at level $i+1$. 


To finally sketch the procedure for computing the discrepancy inside a given cell,
let~$B=[a,b]$ be the cell, $X_b$ the points contained in $[0,b)$, and $X'$ the set of points
in $[0,b)$ which are not also contained in $[0,a]$. The points $X'$ must thus be internal
in at least one dimension, so by invariant 1, the points $X'$ are internal in exactly one
dimension $j \in \{1,\ldots,d\}$. Note that if a point $x \in X'$ is internal in
dimension $j$, and hence has $x_i \leq a_i$ for every $i \in \{1,\ldots,d\}$, $i \neq j$, 
then for any $y \in (a,b)$ we have that $x \in [0,y)$ if and only if $y_j > x_j$; that is,
if $x$ is internal in dimension $j$, then the membership of $x$ in $[0,y)$ is determined
only by $y_j$. Now, for each $j \in \{1,\ldots,d\}$, let~$X'_j=\{x \in X': a_j < x_j <
b_j\}$ be the points internal in dimension $j$; note that this partitions $X'$. 
We can then determine $X \cap [0,y)$ for any $y \in (a,b)$ by looking at the coordinates
$y_j$ independently, that is,
\[
[0,y) \cap X = ([0,a] \cap X_b) \cup \bigcup_{j=1}^d \{x \in X'_j: x_j < y_j\}.
\]
This independence makes the problem suitable for the algorithmic technique of \emph{dynamic
  programming} (see, e.g.,~\cite{CLRS09}).
Briefly, let $f_j(y) = |X_b \cap [0,a]| + \sum_{k=1}^j |\{x \in X'_k: x_k < y_k\}|$.
For $i \in \{1,\ldots,n\}$ and $j \in \{1,\ldots,d\}$, 
let $p(i,j)$ be the minimum value of $\prod_{k=1}^j y_k$ such that $f_j(y) \geq i$, and
let $q(i,j)$ be the maximum value of $\prod_{k=1}^j y_k$ such that $f_j(y) \leq i$ and $y \in (a,b)$. 
By sorting the coordinates $x_j$ of every set $X'_j$, it is easy both to compute
the values of $p(\cdot,1)$ and $q(\cdot,1)$, and to use the values of $p(\cdot,j)$ and
$q(\cdot,j)$ to compute the values of $p(\cdot,j+1)$ and $q(\cdot,j+1)$. The maximum
discrepancy attained for a box $[0,y)$ for $y \in (a,b)$ can then be computed from
$p(\cdot,d)$ and $q(\cdot,d)$; note in particular that for $y \in (a,b)$, we have
$f_d(y) = |X \cap [0,y)|$. For details, we refer to \cite{DEM96}.
Discrepancy for boxes $[0,y]$ with $y_j=1$ for one or several $j\in\{1,\ldots,d\}$
can be handled in a similar way. 

Some slight extra care in the analysis of the dynamic programming, and application of a
more intricate form of the decomposition of Overmars and Yap, will lead to a running time
of $O(n^{d/2+1})$ and $O(n)$ space, as shown in~\cite{DEM96}.

\subsection{Complexity Results}
\label{Complexity}

We saw in the previous sections that for fixed dimension $d$ the most efficient
algorithm for calculating the star discrepancy of arbitrary $n$-point sets in 
$[0,1)^d$, the algorithm of Dobkin, Eppstein, and Mitchell, has a running time
of order $O(n^{1+d/2})$. 

So the obvious question is if it is possible to construct a faster algorithm 
for the discrepancy calculation whose running time does \emph{not} depend exponentially  
on the dimension $d$. 

There are two recent complexity theoretical results that suggest that such an algorithm
does not exist---at least not if the famous hypotheses from complexity theory,
namely that P $\neq$ NP and the stronger \emph{exponential time hypothesis} \cite{IP99},
are true.

\subsubsection{Calculating the Star Discrepancy is NP-Hard}
\label{NP}

Looking at identity (\ref{disfor}), we see that 
the calculation of the star discrepancy of a given point set $X$ is
in fact a \emph{discrete optimization problem}, namely the problem to find
an $y\in\overline{\Gamma}$ that maximizes the function value 
\begin{equation*}
 \delta^*(y,X) = \max \left\{ \delta(y,X) \,,\, \overline{\delta}(y,X) \right\}.
\end{equation*}
In \cite{GSW09} it was proved that the calculation of the star discrepancy is
in fact an NP-hard optimization problem. Actually, a stronger statement was proved in 
\cite{GSW09}, namely that the calculation of the star discrepancy is NP-hard even
if we restrict ourselves to the easier sub-problem where all the coordinates of the input
have finite binary expansion, i.e., are of the form $k2^{-\kappa}$ for some $\kappa \in \N$
and some integer $0\le k \le 2^{\kappa}$. To explain this result in more detail let us
start by defining the \emph{coding length} of a real number from the interval $[0,1)$ to be
the number of digits in its binary expansion.

Informally speaking, the class NP is the class of all \emph{decision problems}, i.e., 
problems with a true-or-false answer, for which the instances with an affirmative
answer can be decided in polynomial time by a 
\emph{non-deterministic Turing machine}\footnote{NP stands for ``non-deterministic
polynomial time''.}; here ``polynomial'' means polynomial in the coding length of the input.
Such a non-deterministic Turing machine can be described as consisting of a
non-deterministic part, which generates for a given instance a polynomial-length candidate
(``certificate'') for the solution, and a deterministic part, which verifies in polynomial
time whether this candidate leads to a valid solution.

In general, the NP-hardness of an optimization problem $U$ is proved by
verifying that deciding the so-called \emph{threshold language} of $U$ is 
an NP-hard decision
problem (see, e.g., \cite[Sect.~2.3.3]{Hro03} or, for a more informal explanation,
\cite[Sect.~2.1]{GJ79}). Thus it was actually
shown in \cite{GSW09} that the following decision problem is NP-hard:

\vspace{1ex}

{\noindent
{\bf Decision Problem} {\sc Star Discrepancy}\\
{\it Instance:} Natural numbers $n,d \in \N$, sequence 
$X=(x^{(i)})_{i=1}^n$ in $[0,1)^d,  
\varepsilon \in (0,1]$\\
{\it Question:} Is $d_{\infty}^*(X) \geq \varepsilon$?}
\vspace{1ex}  

Notice that the input of {\sc Star Discrepancy} has only finite coding length if the binary expansion of all 
coordinates and of $\varepsilon$ is finite.
The standard approach to prove NP-hardness of some decision problem is to 
show that another decision problem that is known to be NP-hard can be reduced to it in polynomial
time.
This approach was also used in \cite{GSW09}, where the graph
theoretical problem {\sc Dominating Set} was reduced to {\sc Star Discrepancy}.
The decision problem {\sc Dominating Set} is defined as follows.

\begin{definition}
Let $G=(V,E)$ be a graph, where $V$ is the finite set of vertices and 
$E\subseteq \{\{v,w\} \,|\, v,w\in V\,,\, v\neq w \}$ the set of edges of $G$. 
Let $M$ be a subset of $V$. Then $M$ is called a 
\emph{dominating set of $G$} if for all $v\in V\setminus M$ there 
exists a $w\in M$ such that $\{v,w\}$ is contained in $E$.
\end{definition}

{\noindent
{\bf Decision Problem} {\sc Dominating Set}\\
{\it Instance:} Graph $G=(V,E)$, $m\in \{1,\ldots,|V|\}$\\
{\it Question:} Is there a dominating set $M\subseteq V$ of cardinality at most $m$?}
\vspace{1ex} 

The decision problem {\sc Dominating Set} is well studied in the literature and 
known to be NP-complete, see, e.g., \cite{GJ79}.

Before we explain how to reduce {\sc Dominating Set} to {\sc Star Discrepancy},
let us introduce another closely related decision problem, which will also be important
in Section~\ref{ILP}.

\vspace{1ex}

{\noindent
{\bf Decision Problem} {\sc Empty Half-Open Box}\\
{\it Instance:} Natural numbers $n,d\in \N$, 
                sequence $X=(x^{(i)})^n_{i=1}$ in $[0,1)^d$, 
                $\varepsilon \in (0,1]$
{\it Question:} Is there a $y\in \overline{\Gamma}(X)$ with $A(y,X) = 0$ and
$V_y \ge \varepsilon$?}
\vspace{1ex}

As in the problem {\sc Star Discrepancy} the coding length of the input of {\sc Empty Half-Open Box} is only 
finite if the binary expansion of all coordinates and of $\varepsilon$ is finite. For the reduction of 
{\sc Dominating Set} it is sufficient to consider these instances with finite coding length.
We now explain how to reduce {\sc Dominating Set} to {\sc Empty Half-Open Box} in polynomial time;
this proof step can afterwards be re-used to establish that {\sc Dominating Set}
can indeed be reduced to {\sc Star Discrepancy}.

\begin{theorem}[{\cite[Thm.~2.7]{GSW09}}]\label{EHOB}
The decision problem {\sc Empty Half-Open Box} is NP-hard.
\end{theorem}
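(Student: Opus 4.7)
The plan is to give a polynomial-time reduction from {\sc Dominating Set} to {\sc Empty Half-Open Box}. Given a {\sc Dominating Set} instance $(G=(V,E),m)$, I would write $V=\{v_1,\ldots,v_N\}$, introduce the closed neighborhoods $N[v]:=\{v\}\cup\{w:\{v,w\}\in E\}$, and construct the following discrepancy instance: set $n=d=N$, $\varepsilon=2^{-m}$, and, indexing coordinates by $V$, for each $v\in V$ place the point $x^{(v)}\in[0,1)^V$ with
\begin{equation*}
 x^{(v)}_w := \begin{cases} 1/2 & \text{if } w \in N[v], \\ 0 & \text{otherwise.} \end{cases}
\end{equation*}
All coordinates and $\varepsilon$ have binary expansions of length polynomial in the size of the {\sc Dominating Set} input, so the transformation clearly runs in polynomial time.

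Next I would establish the correspondence between candidate boxes and candidate dominating sets. Every $y\in\overline{\Gamma}(X)$ has each coordinate in $\{0,1/2,1\}$, and since we require $V_y\ge\varepsilon>0$ we may assume $y_w\in\{1/2,1\}$ for all $w$. Associating to each such $y$ the set $M(y):=\{w:y_w=1/2\}$, one has $V_y=2^{-|M(y)|}$, so the volume condition $V_y\ge 2^{-m}$ is equivalent to $|M(y)|\le m$. Moreover, $x^{(v)}\in[0,y)$ iff $x^{(v)}_w<y_w$ for every $w$: for $w\notin M(y)$ this is automatic, while for $w\in M(y)$ it forces $x^{(v)}_w=0$, i.e.\ $w\notin N[v]$. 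Hence $x^{(v)}\in[0,y)$ iff $M(y)\cap N[v]=\emptyset$, so the box $[0,y)$ is empty iff every $v\in V$ has some element of $M(y)$ in its closed neighborhood, i.e.\ iff $M(y)$ is a dominating set of~$G$.

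Combining these two observations, $(n,d,X,\varepsilon)$ is a YES instance of {\sc Empty Half-Open Box} iff $(G,m)$ is a YES instance of {\sc Dominating Set}, and the NP-hardness claim follows. The main point requiring care will be aligning the orientation of the two conditions so that \emph{emptiness} of the box corresponds to $M(y)$ hitting all closed neighborhoods (rather than missing them), while simultaneously choosing the non-trivial coordinate value so that (i) the volume decreases monotonically with $|M(y)|$ and (ii) the threshold $\varepsilon$ still admits a polynomial-length binary encoding; using the value $1/2$ makes these constraints fit together cleanly, and the symmetric choice of the only other grid value being $0$ ensures that the coordinate-wise strict inequality in $[0,y)$ translates exactly into the combinatorial membership condition $w\in N[v]$.
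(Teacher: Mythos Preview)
Your proposal is correct and follows essentially the same reduction as the paper: the paper constructs the point set with $x^{(i)}_j=\alpha$ if $j\in N[i]$ and $x^{(i)}_j=\beta$ otherwise (with the concrete suggestion $\alpha=1/2$, $\beta=0$), sets $\varepsilon=\alpha^m$, and argues the same correspondence between subsets $M=\{j:y_j=\alpha\}$ and dominating sets. The only cosmetic differences are that the paper phrases the two directions separately rather than via the bijection $y\leftrightarrow M(y)$, and allows a slightly more general pair $(\alpha,\beta)$ with $\beta<\alpha^n$.
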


\begin{proof}
Let $G=(V,E), m \in \{1,\ldots,|V|\}$ be an instance of {\sc Dominating Set}. 
We may assume that $V= \{1,\dots, n\}$ for $n:=|V|$. 
Let $\alpha, \beta \in [0,1)$ have finite binary expansions and satisfy $\beta < \alpha^n$; for instance,
we may choose $\alpha =1/2$ and $\beta =0$.
For $i,j \in \{1,\ldots,n\}$ put 
\begin{align*}
x^{(i)}_j :=
\begin{cases}
\alpha, &\text{ if } \{i,j\} \in E \text{ or } i=j,\\
\beta, &\text{ otherwise.}
\end{cases}
\end{align*}
and put
$x^{(i)}:=(x^{(i)}_j)_{j=1}^n \in [0,1)^n$ and $X:=(x^{(i)})_{i=1}^n$.
We shall show that there is a dominating set $M \subseteq V$ of cardinality at most $m$ 
if and only if there is a $y \in \overline{\Gamma}(X)$ such that $A(y,X)=0$ and
$V_y \geq \alpha^m$.

Firstly, assume that there is a dominating set $M \subseteq V$ 
of cardinality at most $m$. 
Put 
\begin{align*}
y_j :=
\begin{cases}
\alpha, &\text{ if } j \in M,\\
1, &\text{ otherwise.}
\end{cases}
\end{align*}
and $y:=(y_j)_{j=1}^n$.
Then $y \in \overline{\Gamma}(X)$ and 
$V_y = \alpha^{|M|} \ge \alpha^m$.
Hence it suffices to prove that $[0,y)\cap X = \emptyset$. Now for each $i \in M$ we have $x^{(i)}_i = \alpha$, i.e., 
$x^{(i)} \notin [0,y)$.
For every $i \in V \setminus M$ there is, by definition of a dominating set, 
a $\nu \in M$ such that $\{i,\nu\} \in E$, implying $x^{(i)}_\nu = \alpha$ which
in turn yields $x^{(i)} \notin [0,y)$. Therefore $A(y,X)=0$.

Secondly, assume the existence of a $y \in \overline{\Gamma}(X)$ such that $A(y,X)=0$ and $V_y \geq \alpha^m$. 
Recall that $y \in \overline{\Gamma}(X)$ implies that $y_j\in \{\beta,\alpha,1\}$ for all $j$. 
%
Since $\beta < \alpha^n \le V_y$, we have 
$|\left\{ j \in \{1,\ldots, n\} \,|\, y_j \geq \alpha \right\}| = n$. 
Putting $M:=\{ i \in \{1,\ldots, n\}\, |\, \ y_i = \alpha \}$, we have $|M| \le m$. 
Since $A(y,X)=0$, we obtain $|M| \ge 1$, and for each $i \in \{1,\ldots,n\}$ 
there exists a $\nu \in M$ such that $\{i,\nu\} \in E$ or $i \in M$. Hence $M$ is a dominating set of $G$ with size at most $m$.
\qed
\end{proof}

In \cite{GSW09} further decision problems of the type ``maximal half-open box for 
$k$ points'' and ``minimal closed box for $k$ points'' were studied; these problems
are relevant for an algorithm of E.~Thi\'emard that is based on integer linear programming,
see Section~\ref{ILP}.

\begin{theorem}[{\cite[Thm.~3.1]{GSW09}}]
\label{TheoStar}
{\sc Star Discrepancy} is NP-hard.
\end{theorem}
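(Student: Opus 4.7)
The plan is to prove NP-hardness via a polynomial-time reduction from \textsc{Dominating Set} to \textsc{Star Discrepancy}, reusing the point-set construction already introduced in the proof of Theorem~\ref{EHOB}. Given an instance $(G,m)$ with $|V(G)|=n$, I would form the set $X=(x^{(i)})_{i=1}^n\subset [0,1)^n$ defined by $x^{(i)}_j=\alpha$ if $\{i,j\}\in E$ or $i=j$, and $x^{(i)}_j=\beta$ otherwise, with $\alpha,\beta$ of finite binary length and $\beta<\alpha^n$ (say $\alpha=1/2$, $\beta=0$), and ask whether $d^*_\infty(X)\geq \alpha^m$.

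The forward implication is free: from a dominating set of size $\leq m$ the proof of Theorem~\ref{EHOB} produces a $y\in\overline{\Gamma}(X)$ with $A(y,X)=0$ and $V_y\geq\alpha^m$, so $\delta(y,X)\geq\alpha^m$ and hence $d^*_\infty(X)\geq\alpha^m$. For the converse, by Lemma~\ref{Disfor} there is a witness $y$ achieving either $\delta(y,X)\geq\alpha^m$ or $\overline{\delta}(y,X)\geq\alpha^m$. In the first sub-case, if $A(y,X)=0$ one applies the backward direction of Theorem~\ref{EHOB} directly. If instead $A(y,X)>0$, I would exploit the combinatorial rigidity of $X$: every $x^{(i)}\in[0,y)$ must have $y_i=1$, because $x^{(i)}_i=\alpha$ and $y_i\in\{\beta,\alpha,1\}$. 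Replacing $y_i=1$ by $y_i=\alpha$ removes the point $x^{(i)}$ from the box while scaling $V_y$ by exactly $\alpha$; iterating and tracking $\delta$ across these local moves will either reach an empty test box of volume at least $\alpha^m$, again invoking Theorem~\ref{EHOB}, or contradict the assumed discrepancy bound.

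The main obstacle will be the closed-box sub-case $\overline{\delta}(y,X)\geq\alpha^m$. Since $\overline{A}(y,X)\leq n$ one only gets the trivial bound $\overline{\delta}(y,X)\leq 1-V_y$, which is far too weak. The cleanest remedy I see is to tune $\alpha$, $\beta$, and the embedding dimension so that $\overline{A}(y,X)/n-V_y<\alpha^m$ for every $y\in\Gamma(X)$; if the bare construction does not achieve this, I would augment $X$ by a polynomial number of padding points engineered to preserve the forward direction (not populating any of the empty boxes of interest) while ``filling'' every $\Gamma(X)$-anchored closed box enough to suppress any spurious excess below $\alpha^m$. All parameters involved have polynomial bit-length, so the reduction runs in polynomial time. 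Combining the two sub-cases reduces any \textsc{Star Discrepancy}-witness to a dominating set of size at most $m$, which together with the NP-hardness of \textsc{Dominating Set} yields the theorem.
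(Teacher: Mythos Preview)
Your overall plan---reduce from \textsc{Dominating Set}, reuse the point set $X$ from Theorem~\ref{EHOB}, set the threshold $\varepsilon=\alpha^m$, and argue the converse direction via the case split coming from Lemma~\ref{Disfor}---is exactly the paper's strategy. The forward direction and your observation that any $x^{(i)}\in[0,y)$ forces $y_i=1$ are both correct.

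The gap is the parameter choice. With $\alpha=1/2$ your iterative argument in the sub-case $A(y,X)>0$ does not go through: passing from $y$ to $y'$ by lowering one coordinate to $\alpha$ gives only $\delta(y',X)\ge \alpha V_y-(k-1)/n=\alpha^{m+1}+\bigl(1-k(1-\alpha)\bigr)/n$, and for $\alpha=1/2$, $k=1$ this is $\ge\alpha^m$ only when $2^m\ge n$, which fails for small $m$. Likewise the closed-box sub-case is not handled; your proposed remedies (retuning $\alpha,\beta$ or adding padding points) are left as hopes rather than constructions.

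The paper resolves both issues in one stroke by taking $\alpha=1-2^{-(n+1)}$ (and $\beta=0$). Bernoulli's inequality then gives $\alpha^m\ge 1-m\,2^{-(n+1)}>1-1/n$ for $n\ge 2$, $m<n$. Consequently, if $\delta(y,X)=V_y-A(y,X)/n\ge\alpha^m>1-1/n$ and $A(y,X)\ge 1$, one would get $V_y>1$, a contradiction; so $A(y,X)=0$ is forced immediately and the backward direction of Theorem~\ref{EHOB} applies with no iteration. For the closed-box sub-case, $\overline\delta(y,X)\ge\alpha^m>1-1/n$ forces $\overline A(y,X)=n$, hence every $y_j\ge x^{(j)}_j=\alpha$; but $y\in\Gamma(X)$ means $y_j\in\{0,\alpha\}$, so $V_y=\alpha^n$, and then $\overline\delta(y,X)=1-\alpha^n\le n\,2^{-(n+1)}<\alpha^m$, a contradiction. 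Thus no padding and no iterative tracking are needed---the single parameter adjustment is the missing idea.
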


\emph{Sketch of the proof}.
Due to identity (\ref{disfor}) the decision problem {\sc Star Discrepancy} can be formulated in an equivalent way: Is there a $y \in \overline{\Gamma}(X)$ such 
that $\delta(y,X) \geq \varepsilon$ or 
$\overline{\delta}(y,X) \geq \varepsilon$? 
The NP-hardness of this equivalent formulation
can again be shown by polynomial time reduction from {\sc Dominating Set}. So let 
$V=\{1,\ldots,n\}$, and let $G=(V,E), m \in \{1,\ldots, n\}$ be an 
instance of {\sc Dominating Set}. We may 
assume without loss of generality $n \geq 2$ and $m<n$. Put $\alpha := 1- 2^{-(n+1)}$, $\beta := 0$, and
\begin{align*}
x^{(i)}_j :=
\begin{cases}
\alpha, &\text{ if } \{i,j\} \in E \text{ or } i=j,\\
\beta, &\text{ otherwise.}
\end{cases}
\end{align*}
The main idea is now to prove for $X:=(x^{(i)})_{i=1}^n$ that $d_{\infty}^*(X) \ge \alpha^m =: \varepsilon$
if and only if there is a dominating set $M\subseteq V$ for $G$ with 
$|M| \le m$.

From the proof of Theorem~\ref{EHOB} we know that the existence of such a dominating
set $M$ is equivalent to the existence of a $y \in \overline{\Gamma}(X)$ with $A(y,X) = 0$
and $V_y \ge \alpha^m$.
Since the existence of such a $y$ implies 
$d_{\infty}^*(X) \ge \alpha^m$, it remains only to show that 
$d_{\infty}^*(X) \ge \alpha^m$ implies in turn the existence of such a $y$.
This can be checked with the help of Bernoulli's inequality, for 
the technical details we refer to \cite{GSW09}.

\subsubsection{Calculating the Star Discrepancy is W[1]-Hard}
\label{W[1]}

Although NP-hardness (assuming P $\neq$ NP) excludes a running time of $(n+d)^{O(1)}$ for
computing the star discrepancy of an input of $n$ points in $d$ dimensions, this still
does not completely address our running time concerns. In a nutshell, we know that the
problem can be solved in polynomial time for every fixed $d$ 
(e.g., by the algorithm of Dobkin, Eppstein, and Mitchell), and that it is NP-hard for
arbitrary inputs, but we have no way of separating a running time of $n^{\Theta(d)}$ from,
say, $O(2^dn^2)$, which would of course be a huge breakthrough for computing
low-dimensional star discrepancy. 

The usual framework for addressing such questions is \emph{parameterized complexity}.
Without going into too much technical detail, a parameterized problem is a decision problem whose
inputs are given with a \emph{parameter} $k$. Such a problem is \emph{fixed-parameter
tractable} (FPT) if instances of total length $n$ and with parameter $k$ can be solved in
time $O(f(k)n^c)$, for some constant $c$ and an arbitrary function $f(k)$.  Observe that this
is equivalent to being solvable in $O(n^c)$ time for every fixed $k$, as contrasted to the
previous notion of polynomial time for every fixed $k$, which also includes running times such as $O(n^k)$.
We shall see in this section that, unfortunately, under standard complexity theoretical
assumptions, no such algorithm is possible (in fact, under a stronger assumption, not even
a running time of~$O(f(d)n^{o(d)})$ is possible). 

Complementing the notion of FPT is a notion of parameterized hardness. 
A \emph{parameterized reduction} from a parameterized problem $\cal{Q}$ to a parameterized
problem ${\cal Q'}$ is a reduction which maps an instance~$I$ with parameter $k$ of ${\cal
  Q}$ to an instance $I'$ with parameter $k'$ of ${\cal Q'}$, such that
\begin{enumerate}
\item $(I,k)$ is a true instance of ${\cal Q}$ if and only if $(I',k')$ is a true instance of ${\cal Q'}$,  
\item $k' \leq f(k)$ for some function~$f(k)$, and 
\item the total running time of the reduction is FPT (i.e., bounded by $O(g(k)||I||^c)$,
for some function $g(k)$ and constant $c$, where~$||I||$ denotes the total coding length of the input).
\end{enumerate}
It can be verified that such a reduction and an FPT-algorithm for ${\cal Q'}$ imply an
FPT-algorithm for ${\cal Q}$. 

The basic hardness class of parameterized complexity, analogous to the class NP of
classical computational complexity, is known as W[1], and can be defined as follows. 
Given a graph $G=(V,E)$, a \emph{clique} is a set $X \subseteq V$ such that for any $u,v
\in X$, $u \neq v$, we have $\{u,v\} \in E$. 
Let \textsc{$k$-Clique} be the following parameterized problem. 

\vspace{1ex}
{\noindent
{\bf Parameterized Problem} {\sc $k$-Clique}\\
{\it Instance:} A graph $G=(V,E)$; an integer $k$. \\
{\it Parameter:} $k$ \\
{\it Question:} Is there a clique of cardinality $k$ in $G$?
}
\vspace{1ex}

The class W[1] is then the class of problems reducible to \textsc{$k$-Clique} under
parameterized reductions, and a parameterized problem is W[1]-hard if there is a
parameterized reduction to it from \textsc{$k$-Clique}. (Note that, similarly, the
complexity class NP can be defined as the closure of, e.g.,
\textsc{Clique} or \textsc{3-SAT} under standard polynomial-time reductions.)
The basic complexity assumption of parameterized complexity is that FPT $\neq$ W[1], or
equivalently, that the \textsc{$k$-Clique} problem is not fixed-parameter tractable; this
is analogous to the assumption in classical complexity that P $\neq$ NP.

For more details, see the books of R.~G.~Downey and M.~R.~Fellows~\cite{DF99} or J.~Flum and
M.~Grohe~\cite{FlumGroheBook}. In particular, we remark that there is a different definition
of W[1] in terms of problems solvable by a class of restricted circuits (in fact, W[1] is
just the lowest level of a hierarchy of such classes, the so-called \emph{W-hierarchy}),
which arguably makes the class definition more natural, and the conjecture FPT $\neq$ W[1]
more believable. 

P.~Giannopoulus et al.~\cite{GKWW12} showed the following.

\begin{theorem}[\cite{GKWW12}]
There is a polynomial-time parameterized reduction from \textsc{$k$-Clique} with parameter
$k$ to \textsc{Star Discrepancy} with parameter $d=2k$.
\end{theorem}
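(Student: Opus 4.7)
The plan is to adapt the reduction strategy of Theorem~\ref{TheoStar} to the parameterized setting: we reduce \textsc{$k$-Clique} (with parameter $k$) to a variant of \textsc{Empty Half-Open Box} in dimension $2k$, and then lift the result to \textsc{Star Discrepancy} by the same argument Gnewuch--Srivastav--Winzen use to pass from \textsc{Empty Half-Open Box} to \textsc{Star Discrepancy}. The challenge compared to Section~\ref{NP} is to keep the dimension a function of the parameter $k$ alone rather than of the number of vertices $n$.

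First, I would fix the encoding. Given an instance $(G=(V,E),k)$ of \textsc{$k$-Clique}, I think of the $2k$ coordinates of $[0,1)^{2k}$ as being grouped into $k$ ``slots'' $\{(2i-1,2i) : 1 \le i \le k\}$, where slot $i$ is meant to encode the choice of the $i$-th vertex $v_i$ of a would-be clique. Using two coordinates per slot (rather than one) is what allows each slot simultaneously to carry vertex-identity information \emph{and} to interact with every other slot through edge-verification gadgets, without paying one extra dimension per pair $(i,j)$.

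Second, as in the proof of Theorem~\ref{EHOB}, I would choose coordinate values $\alpha,\beta \in [0,1)$ with $\beta$ extremely small compared to $\alpha$, and build a point set $X \subset [0,1)^{2k}$ with two kinds of ``blockers'': (i) vertex-selector points that force any ``large'' anchored box $[0,y)$ avoiding $X$ to correspond to a well-defined assignment $i \mapsto v_i \in V$ per slot, encoded in the pair $(y_{2i-1}, y_{2i})$; and (ii) for every pair of slots $(i,j)$ with $i \ne j$ and every non-edge $\{u,v\} \notin E$, a ``non-edge'' point $p_{u,v,i,j}$ placed so that it lies in $[0,y)$ precisely when the slot-assignment sends $i \mapsto u$ and $j \mapsto v$. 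With appropriate choices of $\alpha$ and thresholds, a box $[0,y) \in \overline{\Gamma}(X)$ of volume at least $\alpha^{2k}$ and containing no point of $X$ exists if and only if $G$ contains a clique of size $k$. The number of points produced is polynomial in $n$, and the whole construction is carried out in polynomial time; the parameter of the output instance is $d=2k$, so this is a valid parameterized reduction from \textsc{$k$-Clique} to the empty-box problem in dimension~$2k$.

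Finally, I would lift this to \textsc{Star Discrepancy} using the same Bernoulli-inequality argument sketched after Theorem~\ref{TheoStar}: by tuning $\alpha$ (and perturbing the non-edge blockers into generic positions if necessary), the existence of an empty anchored box of volume at least $\alpha^{2k}$ becomes equivalent to $d^*_\infty(X) \ge \varepsilon$ for an explicitly chosen $\varepsilon = \varepsilon(k,n)$, since any discrepancy contribution from the $\overline{\delta}$-term can be ruled out by the choice of $\alpha$ and the structure of $X$. The main obstacle in this plan is Step~(ii): realising the $\binom{k}{2}$ pairwise edge constraints using only the $k$ pairs of coordinates. A naive encoding would spend an extra dimension per unordered pair $\{i,j\}$ and blow the dimension up to $k+\binom{k}{2}$, which would only give W[1]-hardness with parameter $\Theta(k^2)$, not $2k$. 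The critical piece of the argument is therefore to show that the ``two slots $\times$ one anchored-box vertex'' view provides enough geometric freedom to simultaneously test all pair-constraints, so that boxes $[0,y)$ encoding a non-clique are always killed by at least one non-edge blocker.
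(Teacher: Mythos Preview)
The paper itself does not prove this theorem; it merely cites \cite{GKWW12} and records the consequences. So there is no in-paper proof to compare against. That said, your outline matches the strategy of the cited reduction, and the only thing missing is the resolution of the ``obstacle'' you flag at the end.

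That obstacle disappears once you use the two coordinates of a slot in \emph{opposite} orders. Order $V=\{1,\dots,n\}$ and encode ``slot $i$ selects vertex $u$'' by $y_{2i-1}=a_u$, $y_{2i}=b_u$ with $a_1<\cdots<a_n$ and $b_1>\cdots>b_n$. The vertex-selector points in slot $i$ (one point per $u$, with coordinates $(a_u,b_{u+1})$ in that slot and $\beta$ elsewhere) force any large empty anchored box to have its slot-$i$ corner at some legitimate pair $(a_w,b_w)$. Under this restriction, ``slot $i$ selects \emph{exactly} $u$'' is equivalent to the conjunction of the two one-sided thresholds $y_{2i-1}>a_{u-1}$ and $y_{2i}>b_{u+1}$, and a conjunction of one-sided thresholds is precisely what containment of a single point tests. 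Thus the non-edge blocker $p_{u,v,i,j}$ can be taken with $p_{2i-1}=a_{u-1}$, $p_{2i}=b_{u+1}$, $p_{2j-1}=a_{v-1}$, $p_{2j}=b_{v+1}$, and $\beta$ in all other coordinates: it lies in $[0,y)$ if and only if slot $i$ selects $u$ and slot $j$ selects $v$. No extra dimension per pair $\{i,j\}$ is needed; the $2k$ coordinates suffice. Your lifting step from the empty-box problem to \textsc{Star Discrepancy} via the Bernoulli-inequality argument of Theorem~\ref{TheoStar} then goes through as written.
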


Thus, the results of~\cite{GKWW12} imply that \textsc{Star Discrepancy} has no algorithm
with a running time of $O(f(d)n^c)$ for any function~$f(d)$ and constant~$c$, unless FPT
$=$ W[1]. 

A stronger consequence can be found by considering the so-called exponential-time
hypothesis (ETH). This hypothesis, formalized by Impagliazzo and Paturi~\cite{IP99},
states that \textsc{3-SAT} on $n$ variables cannot be solved in time $O(2^{o(n)})$; in a
related paper~\cite{IPZ01}, this was shown to be equivalent to similar statements about
several other problems, including that \textsc{$k$-SAT} on $n$ variables cannot be solved
in time $O(2^{o(n)})$ for $k\geq 3$ and that \textsc{3-SAT} cannot be solved in time $O(2^{o(m)})$,
where~$m$ equals the total number of clauses in an instance (the latter form is
particularly useful in reductions).

It has been shown by J.~Chen et al.~\cite{ChenCFHJKX05,ChenHKX06} that \textsc{$k$-Clique}
cannot be solved in $f(k)n^{o(k)}$ time, for any function~$f(k)$, unless ETH is false. 
We thus get the following corollary.

\begin{corollary}[\cite{GKWW12}]
\textsc{Star Discrepancy} for $n$ points in $d$ dimensions cannot be solved exactly in
$f(d)n^{o(d)}$ time for any function~$f(d)$, unless ETH is false.
\end{corollary}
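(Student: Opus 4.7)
The approach is a standard parameterized reduction argument: I would combine the parameterized reduction in the preceding theorem with the ETH-based lower bound of Chen et al.\ for \textsc{$k$-Clique}, and derive a contradiction from the hypothetical fast algorithm.

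Concretely, assume for contradiction that some algorithm $\mathcal{A}$ solves \textsc{Star Discrepancy} in time $f(d)\,n^{o(d)}$ for some function $f$. Given a \textsc{$k$-Clique} instance $(G,k)$ of total coding length $N$, I would first apply the parameterized reduction of the preceding theorem. Since it runs in polynomial time, it produces in time $N^{O(1)}$ a \textsc{Star Discrepancy} instance with $n \le N^c$ points in dimension $d = 2k$, for some absolute constant $c$. Running $\mathcal{A}$ on this output then decides the original \textsc{$k$-Clique} instance in total time
\begin{equation*}
N^{O(1)} + f(2k)\cdot (N^c)^{o(2k)} \;=\; g(k)\cdot N^{o(k)},
\end{equation*}
where I set $g(k) := f(2k)$ and use that the absolute constant $c$ and the factor $2$ can be absorbed into the little-$o$ exponent. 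This contradicts the Chen et al.\ theorem, which rules out an $h(k)\,N^{o(k)}$-time algorithm for \textsc{$k$-Clique} (for any function $h$) unless ETH is false.

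There is no genuine obstacle here; the only step warranting a second look is the exponent manipulation $(N^c)^{o(2k)} = N^{o(k)}$, which holds because $e(k)/k \to 0$ implies $2c\,e(k)/k \to 0$. Once the parameterized reduction of the preceding theorem and the ETH-based lower bound for \textsc{$k$-Clique} are in hand, the corollary follows immediately by composing them.
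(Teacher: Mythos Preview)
Your proposal is correct and matches the paper's approach exactly: the paper does not spell out a proof but simply states that the corollary follows by combining the polynomial-time parameterized reduction from \textsc{$k$-Clique} (with $d=2k$) with the ETH-based lower bound of Chen et al., and you have filled in precisely these details. Your handling of the exponent manipulation and the absorption of the polynomial reduction time into $h(k)N^{o(k)}$ is the standard and correct argument.
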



In fact, it seems that even the constant factor in the exponent of the running time of the
algorithm of Dobkin, Eppstein and Mitchell~\cite{DEM96} would be very difficult to improve;
e.g., a running time of $O(n^{d/3+c})$ for some constant~$c$ would imply a new faster
algorithm for \textsc{$k$-Clique}. (See~\cite{GKWW12} for more details, and for a
description of the reduction.)

\subsection{Approximation of the Star Discrepancy}
\label{approximation}

As seen in Section~\ref{NP} and~\ref{W[1]}, complexity theory tells us that the
exact calculation of the star discrepancy of large point sets in high dimensions is infeasible.

The theoretical bounds for the star discrepancy of low-discrepancy point sets 
that are available in the literature  describe the 
asymptotic behavior of the star discrepancy well only if the number of points $n$ tends to infinity. These 
bounds are typically useful for point sets of size $n \gg e^d$, but give no helpful information
for moderate values of $n$. To give concrete examples, we restate here some numerical results
provided in \cite{Joe12} for bounds based on inequalities of \emph{Erd\H os-Tur\'an-Koksma-type} (see, e.g., \cite{DP10, Nie92}).

If the point set $P_n(z)$ is an $n$-point rank-$1$ lattice in $[0,1]^d$ with generating
vector $z\in \mathbb{Z}^d$ (see, e.g., \cite{Nie92, SJ94}), then its discrepancy can be bounded by
\begin{equation}
\begin{split}
 d^*_\infty(P_n(z)) &\le 1 - \left( 1 - \frac{1}{n} \right)^d + T(z,n) \le 1 - \left( 1 - \frac{1}{n} \right)^d + W(z,n)\\
&\le 1 - \left( 1 - \frac{1}{n} \right)^d + R(z,n)/2; 
\end{split}
\end{equation}
here the quantities $W(z,n)$ and $R(z,n)$ can be calculated to a fixed precision in $O(nd)$ operations \cite{Joe12, JS92},
while the calculation of $T(z,n)$ requires $O(n^2 d)$ operations (at least there is so far no faster algorithm known).

S.~Joe presented in \cite{Joe12} numerical examples where he calculated the values of $T(z,n)$, $W(z,n)$, and $R(z,n)$ for 
generators $z$ provided by a \emph{component-by-component algorithm} (see, e.g., \cite{Kuo03, NC06, SKJ02, SR02}) 
from \cite[Sect. 4]{Joe12}. For dimension $d=2$ and $3$ he was able to compare the quantities with the exact values
\begin{equation}
 d^*_\infty(P_n(z)) - \left[ 1 - \left( 1 - \frac{1}{n} \right)^d \right] =: E(z,n).
\end{equation}
In $d=2$ for point sets ranging from $n=157$ to $10,007$, the smallest of the three quantities, $T(z,n)$, was $8$ to $10$ times larger than
$E(z,n)$. In dimension $d=3$ for point sets ranging from $n=157$ to $619$ it was even more than $18$ times larger. (Joe used for the computation
of $E(z,n)$ the algorithm proposed in \cite{BZ93} and was therefore limited to this range of examples.)

Computations for $d=10$ and $20$ and $n=320,009$ led to $T(z,n) = 1.29 \cdot 10^4$ and $5.29 \cdot 10^{13}$, respectively.
Recall that the star discrepancy and $E(z,n)$ are always bounded by $1$. 
The more efficiently computable quantities $W(z,n)$ and $R(z,n)$ led obviously to worse results, but $W(z,n)$ was at least very close
to $T(z,n)$. 

This example demonstrates that for good estimates of the star discrepancy for point sets of practicable size we can unfortunately not rely
on theoretical bounds.

Since the exact calculation of the star discrepancy is infeasible in high dimensions, the only remaining alternative is to consider approximation
algorithms. In the following we present the known approaches.

\subsubsection{An Approach Based on Bracketing Covers}
\label{bracketing}

An approach that approximates the star discrepancy of a given set $X$ up to a
user-specified error $\delta$ was presented by E.~Thi\'emard \cite{Thi00, Thi01a}.
It is in principle based on the generation of suitable
\emph{$\delta$-bracketing covers} (which were not named in this way in 
\cite{Thi00, Thi01a}).
Let us describe Thi\'emard's approach in detail.

The first step is to 
``discretize'' the star discrepancy at the cost of an approximation
error of at most $\delta$. The corresponding discretization is different from the one described
in Section~\ref{EAC}; in particular, it is completely independent of the input
set $X$. The discretization is done by choosing a 
suitable finite set of test boxes anchored in zero whose upper right 
corners form a so-called \emph{$\delta$-cover}. We repeat
here the definition from \cite{DGS05}. 

\begin{definition}
A finite subset $\Gamma$ of $[0,1]^d$ is called a \emph{$\delta$-cover}
of the class $\mathcal{C}_d$ of all axis-parallel half-open boxes anchored in zero 
(or of $[0,1]^d$) if for all $y\in [0,1]^d$ there exist
$x,z\in \Gamma\setminus \{0\}$ such that 
\begin{equation*}
x\le y\le z
\hspace{3ex}\text{and}\hspace{3ex} 
V_z - V_x \le \delta.
\end{equation*}
Put 
\begin{equation*}
N(\mathcal{C}_d, \delta) := \min\{|\Gamma| \,|\,\, \Gamma\hspace{1ex} 
\text{$\delta$-cover of $\mathcal{C}_d$.}\}
\end{equation*}
\end{definition}

Any $\delta$-cover $\Gamma$ of $\mathcal{C}_d$ satisfies the following 
approximation property: 

\begin{lemma}
\label{Approx}
Let $\Gamma$ be a $\delta$-cover of $\mathcal{C}_d$.
For all finite sequences $X$ in $[0,1]^d$
we have
\begin{equation}
\label{approx}
d^*_\infty(X) \le d^*_{\Gamma}(X) + \delta,
\end{equation}
where
\begin{equation*}
d^*_{\Gamma}(X) := \max_{y\in\Gamma} \left| V_y
- A(y,X) \right|
\end{equation*}
can be seen as a discretized version of the star discrepancy. 
\end{lemma}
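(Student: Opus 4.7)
The plan is to prove the inequality pointwise: I will show that for every $y\in[0,1]^d$, the quantity $\delta^*(y,X)=\max\{\delta(y,X),\overline{\delta}(y,X)\}$ is bounded by $d^*_{\Gamma}(X)+\delta$, and then take the supremum using the representation $d^*_\infty(X) = \sup_{y\in[0,1]^d}\delta^*(y,X)$ already recorded in Section~\ref{EAC}. The workhorse will be a sandwiching step: given $y$, invoke the $\delta$-cover property to extract $x,z\in\Gamma$ with $x\le y\le z$ and $V_z-V_x\le \delta$, and then exploit the monotonicity of both $V_{(\cdot)}$ and the counting functions in the coordinatewise order.

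Concretely, for the first half of $\delta^*$, the inclusion $[0,x)\subseteq[0,y)\subseteq[0,z)$ gives $A(x,X)\le A(y,X)$ and $V_y\le V_z$, so
\begin{equation*}
V_y - \tfrac{1}{n}A(y,X) \;\le\; V_z - \tfrac{1}{n}A(x,X) \;=\; (V_z-V_x) + \bigl(V_x - \tfrac{1}{n}A(x,X)\bigr) \;\le\; \delta + d^*_{\Gamma}(X).
\end{equation*}
An entirely symmetric chain, exchanging the roles of $x$ and $z$, bounds $\tfrac{1}{n}A(y,X)-V_y$ by the same quantity. This settles the half-open box contribution to $d^*_\infty(X)$ cleanly.

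The main obstacle I expect is the analogous bound for $\overline{\delta}(y,X) = \tfrac{1}{n}\overline{A}(y,X)-V_y$, because $\overline{A}$ counts points in the closed box $[0,y]$ and the inclusion $[0,y]\subseteq[0,z)$ is not automatic from $y\le z$ when some coordinates coincide. My plan is to handle this by a limiting argument: for small $\varepsilon>0$, set $y^{(\varepsilon)} := y+\varepsilon(1,\ldots,1)$ (capped at $1$), apply the $\delta$-cover property to $y^{(\varepsilon)}$ to get $x^{(\varepsilon)},z^{(\varepsilon)}\in\Gamma$ with the appropriate sandwich, and observe that $\overline{A}(y,X)\le A(y^{(\varepsilon)},X)\le A(z^{(\varepsilon)},X)$ for all sufficiently small $\varepsilon$. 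Running the same algebra as above and letting $\varepsilon\to 0^+$ (using $V_{x^{(\varepsilon)}}\to V_{x}$ or, more robustly, $V_{x^{(\varepsilon)}}\le V_{y^{(\varepsilon)}}\to V_y$) yields $\overline{\delta}(y,X)\le d^*_\Gamma(X)+\delta$.

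Combining the two cases gives $\delta^*(y,X)\le d^*_\Gamma(X)+\delta$ for every $y$, and taking the supremum over $y\in[0,1]^d$ establishes~(\ref{approx}). The only delicate point, as noted, is the half-open/closed discrepancy, which the perturbation absorbs harmlessly since $\Gamma$ is finite and the involved counting functions take only finitely many values.
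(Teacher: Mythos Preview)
Your sandwiching argument is correct and is exactly the straightforward route the paper has in mind (the paper does not spell out a proof, merely calls it ``straightforward'' and refers to \cite{DGS05}). One simplification: the entire paragraph devoted to $\overline{\delta}(y,X)$ and the perturbation $y^{(\varepsilon)}$ is unnecessary. The star discrepancy is \emph{defined} via half-open boxes only, namely $d^*_\infty(X)=\sup_{y\in[0,1]^d}\bigl|V_y-\tfrac{1}{n}A(y,X)\bigr|$; the representation $d^*_\infty(X)=\sup_y\delta^*(y,X)$ invoked from Section~\ref{EAC} is an equivalent reformulation, not an additional requirement. Your first chain together with its symmetric counterpart already yields $\bigl|V_y-\tfrac{1}{n}A(y,X)\bigr|\le d^*_\Gamma(X)+\delta$ for every $y$, and taking the supremum finishes the proof without any appeal to closed boxes or limiting arguments.
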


A proof of Lemma~\ref{Approx} is straightforward. (Nevertheless, it is, e.g., 
contained in \cite{DGS05}.)

In \cite{Gne08a} the notion of $\delta$-covers was related to the 
concept of bracketing entropy, which is well known in the theory
of empirical processes. We state here the definition for the set
system $\mathcal{C}_d$ of anchored axis-parallel boxes (a general definition
can, e.g., be found in \cite[Sect. 1]{Gne08a}):

\begin{definition}
A closed axis-parallel box $[x,z]\subset [0,1]^d$ is a 
\emph{$\delta$-bracket}
of $\mathcal{C}_d$ if $x\le z$ and 
$V_z-V_x \le \delta$. A 
\emph{$\delta$-bracketing cover} of $\mathcal{C}_d$ is a set of 
$\delta$-brackets whose union is 
$[0,1]^d$. By $N_{[\,]}(\mathcal{C}_d,\delta)$ we denote the 
\emph{bracketing number} of $\mathcal{C}_d$ (or of $[0,1]^d$), i.e., the smallest number
of $\delta$-brackets whose union is $[0,1]^d$. The quantity 
$\ln  N_{[\,]}(\mathcal{C}_d,\delta)$ is called the \emph{bracketing entropy}.
\end{definition}

The bracketing number and the quantity $N(d,\delta)$ are related to
the \emph{covering} and the \emph{$L_1$-packing number}, see, e.g., \cite[Rem.~2.10]{DGS05}. 

It is not hard to verify that 
\begin{equation}
\label{relbra}
N(\mathcal{C}_d,\delta) \le 2 N_{[\,]}(\mathcal{C}_d,\delta) \le N(\mathcal{C}_d,\delta)
(N(\mathcal{C}_d, \delta)+1)
\end{equation}
holds. Indeed, if $\mathcal{B}$ is a $\delta$-bracketing cover, then it is easy to see that
\begin{equation}
\label{gamma_b}
\Gamma_{\mathcal{B}} := \{x\in [0,1]^d\setminus \{0\} \,|\,
\exists y\in [0,1]^d: [x,y]\in\mathcal{B}\hspace{1ex}\text{or}\hspace{1ex}
[y,x]\in\mathcal{B} \}
\end{equation}
is a $\delta$-cover. If $\Gamma$ is a $\delta$-cover, then
\begin{equation*}
\mathcal{B}_\Gamma := \{[x,y] \,|\, x,y\in \Gamma\cup \{0\} \,,\, 
[x,y] \hspace{1ex}\text{is a $\delta$-bracket}\,,\, x\neq y\}
\end{equation*}
is a $\delta$-bracketing cover. These two observations imply (\ref{relbra}).

In \cite{Gne08a} it is shown that
\begin{equation}
\label{brac}
\delta^{-d}(1-O_d(\delta)) \le N_{[\,]}(\mathcal{C}_d,\delta) \le
2^{d-1} (2\pi d)^{-1/2}{\rm e}^d (\delta^{-1} + 1)^d,
\end{equation}
see \cite[Thm. 1.5 and 1.15]{Gne08a}.
The construction that leads to the upper bound in (\ref{brac})
implies also 
\begin{equation}
\label{braasym}
N_{[\,]}(\mathcal{C}_d, \delta) \le (2\pi d)^{-1/2}{\rm e}^d \delta^{-d} + 
O_d(\delta^{-d+1})
\end{equation}
(see \cite[Remark 1.16]{Gne08a}) and 
\begin{equation}
\label{good}
N(\mathcal{C}_d,\delta) \le 2^d (2\pi d)^{-1/2} {\rm e}^d (\delta^{-1}+1)^d.
\end{equation}

For more information about $\delta$-covers and $\delta$-bracketing covers
we refer to the original articles \cite{DGS05, Gne08a, Gne08b} and the 
survey article \cite{Gne12b}; \cite{Gne08b} and \cite{Gne12b} contain also
several figures showing explicit two-dimensional constructions.

The essential idea of Thi\'emard's algorithm from \cite{Thi00, Thi01a}
is to generate for a given point set $X$ and a user-specified error 
$\delta$ a small $\delta$-bracketing cover 
$\mathcal{B} =\mathcal{B}_\delta$ of $[0,1]^d$ and to approximate 
$d^*_\infty(X)$ by $d^*_{\Gamma}(X)$, where $\Gamma = \Gamma_{\mathcal{B}}$ as in 
(\ref{gamma_b}), up to an error of at most $\delta$,
see Lemma~\ref{Approx}.

The costs of generating $\mathcal{B}_\delta$
are of order $\Theta(d|\mathcal{B}_\delta|)$. If we count the 
number of points in $[0,y)$ for each
$y\in\Gamma_{\mathcal{B}}$ in a naive way, this results in an
overall running time of $\Theta(dn|\mathcal{B}_\delta|)$ for the
whole algorithm. As Thi\'emard pointed out in \cite{Thi01a}, this
\emph{orthogonal range counting} can be done in moderate dimension $d$ 
more effectively by 
employing data structures based on \emph{range trees}, see, e.g., 
\cite{BCKO08, Meh84}. 
This approach reduces in moderate dimension $d$ the time 
$O(dn)$ per test box that is needed for the naive counting to 
$O(\log^d n)$. 
Since a range tree for $n$ points can be
generated in $O(C^d n\log^d n)$ time, $C>1$ some constant,
this results in an overall running time of
\begin{equation*}
O((d+\log^d n) |\mathcal{B}_\delta| + C^d n \log^d n)\,.
\end{equation*}
As this approach of orthogonal range counting is obviously not very beneficial
in higher dimension (say, $d>5$), we do not further explain it here, but refer for 
the details to \cite{Thi01a}. 

The smallest bracketing covers $\mathcal{T}_\delta$ used by Thi\'emard can be found
in \cite{Thi01a}; they differ from the constructions provided in \cite{Gne08a, Gne08b},
see \cite{Gne08b, Gne12}.
He proved for his best constructions the upper bound
\begin{equation*}
|\mathcal{T}_\delta| \leq
e^d \left( \frac{\ln \delta^{-1}}{\delta} 
+ 1 \right)^d\,,
\end{equation*}
a weaker bound than (\ref{braasym}) and (\ref{good}), which both hold for the 
$\delta$-bracketing covers constructed in \cite{Gne08a}.
Concrete comparisons of Thi\'emard's bracketing
covers with other constructions in dimension $d=2$ can be found in
\cite{Gne08b}, where also optimal two-dimensional bracketing covers are 
provided.

The lower bound in (\ref{brac}) proved in \cite{Gne08a} immediately 
implies a lower bound for the running time of Thi\'emard's algorithm,
regardless how cleverly the $\delta$-bracketing covers are chosen.
That is because the dominating factor in the running time is the construction
of the $\delta$-bracketing cover $|\mathcal{B}_\delta|$, which
is of order $\Theta(d|\mathcal{B}_\delta|)$. Thus (\ref{brac})
shows that the running time of the algorithm is exponential in $d$.
(Nevertheless smaller $\delta$-bracketing covers, which may,
e.g., be generated by extending the ideas from \cite{Gne08b} to
arbitrary dimension $d$, would widen the range of applicability of 
Thi\'emard's algorithm.)
Despite its limitations, Thi\'emard's algorithm is a helpful tool
in moderate dimensions, as was reported, e.g., in \cite{DGW10, Thi00,Thi01a} or \cite{OSG12},
see also Section~\ref{QUALTEST}.

For more specific details we refer to \cite{PC05, Thi00, Thi01a}.
For a modification of Thi\'emard's approach to approximate 
$L_\infty$-extreme discrepancies see \cite[Sect.~2.2]{Gne08a}.

\subsubsection{An Approach Based on Integer Linear Programming}
\label{ILP}

Since the large scale enumeration problem (\ref{disfor}) is infeasible in high dimensions, a number of algorithms have been developed that are based on heuristic approaches. 
One such approach was suggested by E.~Thi\'emard in~\cite{Thi01b} (a more detailed description of his algorithm can be found in his PhD thesis~\cite{Thi00PhD}). 
Thi\'emard's algorithm is based on \emph{integer linear programming}, a concept that we shall describe below. 
His approach is interesting in that, despite being based on heuristics in the initialization phase, it allows for an arbitrarily good approximation of the star discrepancy of any given point set. 
Furthermore, the user can decide on the fly which approximation error he is willing to tolerate. This is possible because the algorithm outputs, during the optimization of the star discrepancy approximation, upper and lower bounds for the exact $d^*_{\infty}(X)$-value.  
The user can abort the optimization procedure once the difference between the lower and upper bound are small enough for his needs, or he may wait until the optimization procedure is finished, and the exact star discrepancy value $d^*_{\infty}(X)$ is computed. 

Before we describe a few details of the algorithm, let us mention that numerical tests in~\cite{Thi01a, Thi01b} 
suggest that the algorithm
from \cite{Thi01b}     outperforms the one from \cite{Thi01a} (see Section~\ref{bracketing}  
for a description of the latter algorithm). 
In particular, instances that are 
infeasible for the algorithm from~\cite{Thi01a} can be solved using the integer linear programming approach described below, see also the discussion in Section~\ref{QUALTEST}.

The basic idea of Thi\'emard's algorithm is to split optimization problem (\ref{disfor}) into $2n$ optimization problems similarly as done in equation (\ref{discrit2}), and to transform these problems into integer linear programs. 
To be more precise, he considers for each value $k \in \{0,1,2,\ldots, n\}$ the volume of the smallest and the largest box containing exactly $k$ points of $X$. These values are denoted $V^k_{\min}$ and $V^k_{\max}$, respectively. 
It is easily verified, using similar arguments as in Lemma~\ref{Disfor}, that these values (if they exist) are obtained by the grid points of $X$, i.e., there exist grid points $y^{k}_{\min} \in \Gamma(X)$, $y^{k}_{\max} \in \overline{\Gamma}(X)$ such that 
\begin{align*}
& \overline{A}(y^{k}_{\min}, X)=k \text{ and } V_{y^{k}_{\min}}=V^{k}_{\min}\,,\\
& A(y^{k}_{\max}, X)=k \text{ and } V_{y^{k}_{\max}}=V^{k}_{\max}\,.
\end{align*}
It follows from (\ref{discrit2}) that 
\begin{align}
\label{thiebound}
d^*_{\infty}(X) = \max_{k \in \{0,1,2,\ldots, n\}} \max
\left\{  
\frac{k}{n}- V^{k}_{\min},
V^{k}_{\max} - \frac{k}{n}
\right\}\,.
\end{align}
As noted in~\cite{GSW09}, boxes containing \emph{exactly} $k$ points may not exist, even if the $n$ points are pairwise different. However, they do exist if for at least one dimension $j \in \{1,\ldots, d\}$ the coordinates $(x_j)_{x \in X}$ are pairwise different. 
If they are pairwise different for all $j \in \{1,\ldots, d\}$, in addition we have 
\begin{align*}
V^1_{\min} \leq \ldots \leq V^n_{\min}
\quad \text{ and } \quad
V^0_{\max} \leq \ldots \leq V^{n-1}_{\max}.
\end{align*}
Note that we obviously have $V^0_{\min}=0$ and $V^n_{\max}=1$. We may therefore disregard these two values in equation (\ref{thiebound}).

As mentioned in Section~\ref{NP} (cf. Theorem~\ref{EHOB} and the text thereafter), 
already the related problem 
``Is $V^0_{\max} \geq \epsilon$?'' is an NP-hard one.
By adding ``dummy points'' at or close to the origin $(0, \ldots, 0)$ one can easily generalize this result to all questions of the type ``Is $V^k_{\max} \geq \epsilon$?''.
Likewise, it is shown in~\cite{GSW09} that the following decision problem is NP-hard. 
\vspace{1ex}

\noindent
{\bf Decision Problem} {\sc $V^k_{\min}$-Box}\\
{\it Instance:} Natural numbers $n,d \in \N$, $k \in \{0,1,\ldots, n\}$, sequence 
$X=(x^{(i)})_{i=1}^n$ in $[0,1)^d,  
\varepsilon \in (0,1]$\\
{\it Question:} Is there a point $y \in \Gamma(X)$ such that 
$\overline{A}(y,X)\geq k$ and 
$V_y \leq \varepsilon$?
\vspace{1ex} 

This suggests that the $V^k_{\max}$- and $V^k_{\min}$-problems are difficult to solve to optimality. 
As we shall see below, in Thi\'emard's integer linear programming ansatz, we will not have to solve all $2n$ optimization problems in (\ref{thiebound}) to optimality. Instead it turns out that for most practical applications of his algorithms only very few of them need to be solved exactly, whereas for most of the problems it suffices to find good upper and 
lower bounds. 
We shall discuss this in more detail below. 

For each of the $2n$ subproblems of computing $V^k_{\min}$ and $V^{k}_{\max}$, respectively, Thi\'emard formulates, by taking the logarithm of the volumes, an integer linear program with $n+d(n-k)$ binary variables. The size of the linear program is linear in the size $nd$ of the input $X$. 
We present here the integer linear program (ILP) for the $V^k_{\min}$-problems. The ones for $V^k_{\max}$-problems are similar. 
However, before we are ready to formulate the ILPs, we need to fix some notation.

For every $n \in \N$ we abbreviate by $S_n$ the set of permutations of $\{1, \ldots, n\}$. 
For all $j \in \{1,\ldots,d\}$ put $x_j^{(n+1)} := 1$ and let $\sigma_j \in S_{n+1}$ such that
\begin{align*}
x_j^{(\sigma_j(1))} \leq \ldots \leq x_j^{(\sigma_j(n))} \leq x_j^{(\sigma_j(n+1))}=1.
\end{align*}
With $\sigma_j$ at hand, we can define, for every index 
$\delta = (\delta_1,\ldots,\delta_d) \in \{1,\ldots,n+1\}^d$ 
the closed and half-open boxes \emph{induced by} $\delta$, 
\begin{align*}
[0,\delta] := \prod_{j=1}^d [0,x_j^{(\sigma_j(\delta_j))}] 
\quad \mbox{ and } \quad 
[0,\delta) \ = \prod_{j=1}^d [0,x_j^{(\sigma_j(\delta_j))}).
\end{align*}
One of the crucial observations for the formulation of the ILPs is the fact that 
$x^{(i)} \in [0,\delta]$ (resp. $x^{(i)} \in [0,\delta)$), 
if and only if for all $j \in \{1,\ldots,d\}$ it holds that 
$ \sigma_j^{-1}(i) \leq \delta_j$ 
(resp. $\sigma_j^{-1}(i) \leq \delta_j - 1$). 
We set
\begin{align*}
z^i_j (\delta) := 
\begin{cases}
1, &\text{ if } \sigma_j^{-1}(i) \leq \delta_j,\\
0, &\text{ otherwise.}
\end{cases}
\end{align*}
Every $\delta$ induces exactly one sequence $z=((z^{(i)}_j(\delta)))_{j=1}^d)_{i=1}^n$ in $(\{0,1\}^{d})^n$, 
and, likewise, for every \emph{feasible} sequence $z$ 
there is exactly one 
$\delta(z) \in \{1,\ldots,n+1\}^d$ with $z=z(\delta(z))$. 
In the following linear program formulation we introduce also the variables $y^{(1)}, \ldots, y^{(n)}$, and we shall have $y^{(i)} = 1$ if and only if $x^{(i)} \in [0,\delta(z)]$.

The integer linear program for the $V^k_{\min}$-problem can now be defined as follows.
\begin{align}
\label{LPmin}
\ln(V^k_{\min}) = \min \ \sum_{j=1}^d [ \ln(x_j^{(\sigma_j(1))}) + \sum_{i=2}^n z^{(\sigma_j(i))}_j (\ln(x_j^{(\sigma_j(i))}) - \ln(x_j^{(\sigma_j(i-1))})) ] 
\end{align}
 \nopagebreak
subject to \nopagebreak
\begin{longtable}{cll}
 (i)& $1=z_j^{(\sigma_j(1))} = \ldots = z_j^{(\sigma_j(k))} 
 \geq \ldots \geq z_j^{(\sigma_j(n))}$ &  $\forall j \in \{1,\ldots,d\}$\\
(ii)& $z_j^{(\sigma_j(i))} = z_j^{(\sigma_j(i+1))}$ & $\forall j \in \{1,\ldots,d\} \forall i \in \{1,\ldots,n\}:$ \\
& & \quad $x_j^{(\sigma_j(i))} = x_j^{(\sigma_j(i+1))}$\\
(iii)& $y^{(i)} \leq z^{(i)}_j$ & $\forall j \in \{1,\ldots,d\} \forall i \in \{1,\ldots,n\}$\\
(iv) & $y^{(i)} \geq 1 - d + \sum_{j=1}^d z^{(i)}_j$ & $\forall i \in \{1,\ldots,n\}$\\
(v)  & $\sum_{i=1}^n y^{(i)} \geq k$ & \\
(vi) & $y^{(i)} \in \{0,1\}$ & $\forall i \in \{1,\ldots,n\}$ \\
(vii)& $z^{(i)}_j \in \{0,1\}$ & $\forall j \in \{1,\ldots,d\} \forall i \in \{1,\ldots,n\}$
\end{longtable}

We briefly discuss the constraints of the integer linear program (\ref{LPmin}).
\begin{itemize}
	\item Since we request at least $k$ points to lie in the box $[0,\delta(z)]$, 
	the inequality $x_j^{(\sigma_j(\delta(z)_j))} \geq x_j^{(\sigma_j(k))}$ must hold for all $j \in \{1,\ldots,d\}$. We may thus fix the values 
	$1 = z_j^{(\sigma_j(1))} = \ldots = z_j^{(\sigma_j(k))}$.
	\item The second constraint expresses that for two points with the same coordinate $x_j^{(\sigma_j(i))} = x_j^{(\sigma_j(i+1))}$ in the $j$th dimension, we must satisfy $z_j^{(\sigma_j(i))} = z_j^{(\sigma_j(i+1))}$. 
	\item The third and fourth condition say that $y^{(i)} = 1$ if and only if $x^{(i)} \in [0,\delta(z)]$. 
	For $x^{(i)} \in [0,\delta(z)]$ we have $\sigma_j^{-1}(i) \leq \delta(z)_j$ and thus  $z^{(i)}_j = 1$, $j \in \{1,\ldots,d\}$. 
	According to condition (iv) this implies $y^{(i)} \geq 1 - d + \sum_{j=1}^d z^{(i)}_j = 1$, and thus $y^{(i)} =1$.
	If, on the other hand, $x^{(i)} \notin [0,\delta(z)]$, there exists a coordinate $j \in \{1,\ldots,d\}$ with $x^{(i)}_j  > \delta(z)_j$. Thus, $z^{(i)}_j = 0$ and condition (iii) implies $y^{(i)} \leq z^{(i)}_j = 0$.
	\item Condition (v) ensures the existence of at least $k$ points inside $[0,\delta(z)]$. 
	\item Conditions (vi) and (vii) are called the \emph{integer} or \emph{binary constraints}. Since only integer (binary) values are allowed, the linear program (\ref{LPmin}) is called a \emph{(binary) integer linear program}. 
	We shall see below that by changing these conditions to $y^{(i)} \in [0,1]$ and $z^{(i)}_j \in [0,1]$, we get the \emph{linear relaxation} of the integer linear program (\ref{LPmin}). The solution of this linear relaxation is a lower bound for the true
 $V^k_{\min}$-solution.
\end{itemize}

Using the $V^k_{\min}$- and $V^k_{\max}$-integer linear programs, 
Thi\'emard computes the star discrepancy of a set $X$ in a sequence of optimization steps, each of which possibly deals with a different $k$-box problem. 
Before the optimization phase kicks in, there is an 
\emph{initialization phase}, in which 
for each $k$ an upper bound 
$\overline{V}^k_{\min}$ for $V^k_{\min}$ and a lower bound 
$\underline{V}^k_{\max}$ for $V^k_{\max}$ is computed. 
This is done by a simple greedy strategy followed by a local optimization procedure that helps to improve the initial value of the greedy strategy. 
Thi\'emard reports that the estimates obtained for the $V^k_{\max}$-problems are usually quite good already, whereas the estimates for the $V^k_{\min}$-problems are usually too pessimistic. 
A lower bound $\underline{V}^k_{\min}$ for the $V^k_{\min}$-problem 
is also computed, using the simple observation that for each dimension $j\in \{1,\ldots, d\}$, the $j$th coordinate of the smallest $V^k_{\min}$-box must be at least as large as the $k$th smallest coordinate in $(x_j)_{x \in X}$. 
That is, we have $V^k_{\min} \geq \prod_{j=1}^{d}{x_j^{(\sigma_j(k))}}$. We initialize the lower bound $\underline{V}^k_{\min}$ by setting it equal to this expression. 
Similarly, $\prod_{j=1}^{d}{x_j^{(\sigma_j(k+1))}}$ is a lower bound for the $V^k_{\max}$-problem, but this bound is usually much worse than the one provided by the heuristics.
For an initial upper bound of $V^k_{\max}$, Thi\'emard observes that the $V^{n-1}_{\max}$-problem can be solved easily. 
In fact, we have $V^{n-1}_{\max}=\max \{ x_j^{(\sigma_j(n))} \mid j \in \{1,\ldots, d\} \}$. 
As mentioned in the introduction to this section, if for all $j \in \{1, \ldots, d \}$ the $j$th coordinates $x^{(1)}_j, \ldots, x^{(n)}_j$ of the points in $X$ are pairwise different, we have 
$V^0_{\max} \leq \ldots \leq V^{n-1}_{\max}$. 
Thus, in this case, we may initialize $\overline{V}^k_{\max}$, $k=1, \ldots, n-1$, by $V^{n-1}_{\max}$.

From these values 
(we neglect a few minor steps in Thi\'emard's computation) 
we compute the following estimates 
\begin{align*}
& \underline{D}^k_{\min}(X) := \tfrac{k}{n}-\overline{V}^k_{\min}
\text{ and } 
\overline{D}^k_{\min}(X) := \tfrac{k}{n}-\underline{V}^k_{\min}\,,\\
& \underline{D}^k_{\max}(X) := \underline{V}^k_{\max} - \tfrac{k}{n} \text{ and }
\overline{D}^k_{\max}(X) := \overline{V}^k_{\max}  - \tfrac{k}{n}\,.\\
\end{align*}
Clearly, 
$\underline{D}^k_{\min}(X) \leq \tfrac{k}{n}-V^k_{\min} \leq \overline{D}^k_{\min}(X)$ 
and 
$\underline{D}^k_{\max}(X) \leq V^k_{\max}-\tfrac{k}{n} \leq \overline{D}^k_{\max}(X)$.

After this initialization phase, the \emph{optimization phase} begins. It proceeds in rounds. In each round, the $k$-box problem yielding the largest estimate 
$$
\overline{D}^*_{\infty}(X) :=
\max
\left\{ 
\max_{k \in \{1,2,\ldots, n\}} 
{\overline{D}^k_{\min}(X)}, 
\max_{k \in \{0,1,\ldots, n-1\}} 
{\overline{D}^k_{\max}(X)}
\right\}
$$
is investigated further. 

If we consider a $V^k_{\min}$- or a $V^k_{\max}$-problem for the first time, we regard the \emph{linear relaxation} of the integer linear program for $\ln(V^k_{\min})$ or $\ln(V^k_{\max})$, respectively. 
That is---cf. the comments below the formulation of the ILP for $V^k_{\min}$-problem above---instead of requiring the variables $y^{(i)}$ and $z^{(i)}_j$ to be either $0$ or $1$, we only require them to be in the interval $[0,1]$. 
This turns the integer linear program into a linear program. 
Although it may seem that this relaxation does not change much, linear programs are known to be polynomial time solvable, and many fast readily available solving procedures, e.g., commercial tools such as CPLEX, are available. 
Integer linear programs and binary integer linear programs such as ours, on the other hand, are known to be NP-hard in general, and are usually solvable only with considerable computational effort.
Relaxing the binary constraints (vi) and (vii) in (\ref{LPmin}) can thus be seen as a heuristic to get an initial approximation of the $V^k_{\min}$- and $V^k_{\max}$-problems, respectively.

In case of a $V^k_{\min}$-problem the value of this relaxed program is 
a lower bound for $\ln(V^k_{\min})$---we thus obtain new estimates for the two values $\underline{V}^k_{\min}$ and $\overline{D}^k_{\min}$.
If, on the other hand, we regard a $V^k_{\max}$-problem, the solution of the relaxed linear program establishes an upper bound for $\ln(V^k_{\max})$; and we thus get new estimates for $\overline{V}^k_{\max}$ and $\overline{D}^k_{\max}$. 
We may be lucky that we get an integral solution, in which case we have determined $V^k_{\min}$ or $V^k_{\max}$, respectively, and do not need to consider this problem in any further iteration of the algorithm. 

If we consider a $V^k_{\min}$- or $V^k_{\max}$-problem for the second time, we solve the integer linear program itself, using a standard \emph{branch and bound} technique. 
Branch and bound resembles a divide and conquer approach: the problem is divided into smaller subproblems, for each of which upper and lower bounds are computed. 

Let us assume that we are, for now, considering a fixed $V^k_{\min}$-problem ($V^k_{\max}$-problems are treated the same way). As mentioned above, we divide this problem into several subproblems, and we compute upper and lower bounds for these subproblems. 
We then investigate the most ``promising'' subproblems 
(i.e., the ones with the largest upper bound and smallest lower bound for the value of $V^k_{\min}$)
further, until the original $V^k_{\min}$-problem at hand has been solved to optimality or until the bounds for $V^k_{\min}$ are good enough to infer that this $k$-box problem does not cause the maximal discrepancy value in (\ref{thiebound}). 

A key success factor of the branch and bound step is a further strengthening of the integer linear program at hand. 
Thi\'emard introduces further constraints to the ILP, some of which are based on straightforward combinatorial properties of the $k$-box problems and others which are based on more sophisticated techniques such as \emph{cutting planes} and \emph{variable forcing} (cf. Thi\'emard's PhD thesis~\cite{Thi00PhD} for details). 
These additional constraints and techniques strengthen the ILP in the sense that the solution to the linear relaxation is closer to that of the integer program. 
Thi\'emard provides some numerical results indicating that 
these methods frequently yield solutions 
based on which we can exclude the $k$-box problem at hand from our considerations for optimizing (\ref{thiebound}). 
That is, only few of the $2n$ many $k$-box problems in (\ref{thiebound}) need to be solved to optimality, cf.~\cite{Thi01b} for the details. 

As explained above, Thi\'emard's approach 
computes upper and lower bounds for the star discrepancy of a given point set at the same time. 
Numerical experiments indicate that the lower bounds are usually quite strong from the beginning, whereas the initial upper bounds are typically 
too large, and decrease only slowly during the optimization phase, cf.~\cite[Section 4.2]{Thi01b} for a representative graph of the convergence behavior. Typical running times of the algorithms can be found in~\cite{Thi01b} and in~\cite{GWW12}. The latter report contains also a comparison to the alternative approach described in the next section.

\subsubsection{Approaches Based on Threshold-Accepting}
\label{threshold}

In the next two sections we describe three heuristic approaches to compute lower bounds for the star discrepancy of a given point set $X$. 
All three algorithms are based on randomized local search heuristics; two of them on a so-called \emph{threshold accepting approach}, see this section, and one of them on a \emph{genetic algorithm}, see Section~\ref{GenAlg}. 

Randomized local search heuristics are problem-independent algorithms that can be used as frameworks for the optimization of inherently difficult problems, such as combinatorial problems, graph problems, etc. 
We distinguish between Monte-Carlo algorithms and Las Vegas algorithms. 
Las Vegas algorithms are known to converge to the optimal solution, but their exact running time cannot be determined in advance. 
Monte-Carlo algorithms, on the other hand, have a fixed running time (usually measured by the number of iterations or the number of function evaluations performed), but we usually do not know the quality of the final output. 
The two threshold accepting algorithms presented next are Monte-Carlo algorithms for which the user may specify the number of iterations he is willing to invest for a good approximation of the star discrepancy value.
The genetic algorithm presented in Section~\ref{GenAlg}, on the other hand, is a Monte-Carlo algorithm with unpredictable running time (as we shall see below, in this algorithm, unconventionally, the computation is aborted when no improvement has happened for some $t$ iterations in a row). 

This said, it is clear that the lower bounds computed by both the threshold accepting algorithms as well as the one computed by the genetic algorithm may be arbitrarily bad. 
However, as all reported numerical experiments suggest, they are usually quite good approximations of the true discrepancy value---in almost all cases for which the correct discrepancy value can be computed the same value was also reported by the improved threshold accepting heuristic~\cite{GWW12} described below. 
We note that these heuristic approaches allow the computation of lower bounds for the star discrepancy also in those settings where the running time of exact algorithms like the one of Dobkin, Eppstein, and Mitchell described in Section~\ref{DEM} are not feasible.

\begin{algorithm2e}
\textbf{Initialization:}
Select $y \in \overline{\Gamma}(X)$ uniformly at random and compute 
$d^*_{\infty}(y,X)$\label{init}\;
\For{$i=1,2, \ldots, I$}{
\textbf{Mutation Step:}
Select a random neighbor $z$ of $y$ and compute $d^*_{\infty}(z,X)$\label{mutation}\;
\textbf{Selection step:}
\lIf{$d^*_{\infty}(z,X) - d^*_{\infty}(y,X) \geq T$}
{$y \leftarrow z$\label{selection}\;}
}
Output $d^*_{\infty}(y,X)$\;
\caption{Simplified scheme of a threshold accepting algorithm for the computation of star discrepancy values. 
$I$ is the runtime of the algorithm (number of iterations), and 
$T$ is the threshold value for the acceptance of a new candidate solution $z$.}
\label{alg:rsh}
\end{algorithm2e}

\emph{Threshold accepting} is based on a similar idea as the well-known 
simulated annealing algorithm \cite{KGV83}. 
In fact, it can be seen as a simulated annealing algorithm in which the selection step is derandomized, cf. Algorithm~\ref{alg:rsh} for the general scheme of a threshold accepting algorithm. 
In our application of computing star discrepancy values, we accept a new candidate solution $z$ if its local discrepancy $d^*_{\infty}(z,X)$ is not much worse than that of the previous step, and we discard $z$ otherwise. 
More precisely, we accept $z$ if and only if the difference $d^*_{\infty}(z,X) - d^*_{\infty}(y,X)$ is at least as large as some \emph{threshold value} $T$. 
The threshold value is a parameter to be specified by the user. We typically have $T<0$. $T<0$ is a reasonable choice as it prevents the algorithm from getting stuck in some local maximum of the local discrepancy function.
In the two threshold accepting algorithms presented below, 
$T$ will be updated frequently during the run of the algorithm (details follow).

The first to apply threshold accepting to the computation of star discrepancies were P.~Winker and K.-T.~Fang~\cite{WF97}.
Their algorithm was later improved in~\cite{GWW12}. 
In this section, we briefly present the original algorithm from~\cite{WF97}, followed by a short discussion of the modifications made in~\cite{GWW12}.

The algorithm of Winker and Fang uses the grid structure $\overline{\Gamma}(X)$. 
As in line~\ref{init} of Algorithm~\ref{alg:rsh}, they initialize the algorithm by selecting a grid point $y \in \overline{\Gamma}(X)$ uniformly at random.
In the mutation step (line~\ref{mutation}), a point $z$ is sampled uniformly at random from the neighborhood $\NN^{mc}_k(y)$ of $y$.
For the definition of $\NN^{mc}_k(y)$ let us first introduce the functions $\varphi_j$, $j \in \{1,\ldots,d\}$, which order the elements in $\overline{\Gamma}_j(X)$; i.e., 
for $n_j:=|\overline{\Gamma}_j(X)|$
the function $\varphi_j$ is a permutation of $\{1, \ldots, n_j\}$ with
$x^{(\varphi_j(1))}_j \leq \ldots \leq x^{(\varphi_j(n_j))}_j=1$.
For sampling a neighbor $z$ of $y$ we first draw $mc$ coordinates 
$j_1, \ldots, j_{mc}$ from $\{1, \ldots, d\}$ uniformly at random. 
We then select, independently and uniformly at random, for each $j_i$, $i=1, \ldots, mc$, a value 
$k_i \in \{-k, \ldots, -1, 0, 1, \ldots, k\}$. 
Finally, we let 
$z=(z_1, \ldots, z_d)$ with
\begin{align*}
z_j:=
\begin{cases}
y_j, &\text{ for } j \notin \{j_1, \ldots, j_{mc}\}\,,\\
y_j+k_j, &\text{ for } j \in \{j_1, \ldots, j_{mc}\}\,.
\end{cases}
\end{align*}
Both the values $mc \in \{1, \ldots, d\}$ and $k \in \{1, \ldots, n/2\}$ are inputs of the algorithm to be specified by the user. For example, if we choose $mc=3$ and $k=50$, then in the mutation step we change up to three coordinates of $y$, and for each such coordinate we allow to do up to $50$ steps on the grid $\overline{\Gamma}_j(X)$, either to the ``right'' or to the ``left''.

In the selection step (line~\ref{selection}), the search point $z$ is accepted if its discrepancy value is better than that of $y$ or if it is at least not worse than $d^*_{\infty}(y,X)+T$, for some threshold $T\leq 0$ that is determined in a precomputation step of the algorithm. 
Winker and Fang decided to keep the same threshold value for $\sqrt{I}$ iterations, and to replace it every $\sqrt{I}$ iterations with a new value $0 \geq T'>T$. 
The increasing sequence of threshold values guarantees that the algorithm has enough flexibility in the beginning to explore the search space, and enough stability towards its end so that it finally converges to a local maximum.
This is achieved by letting $T$ be very close to zero towards the end of the algorithm.

The algorithm by Winker and Fang performs well in numerical tests 
on rank-1 lattice rules, and it frequently computes the correct star discrepancy values in cases where this can be checked. 
However, as pointed out in~\cite{GWW12}, their algorithm does not perform very well in dimensions $10$ and larger. 
For this reason, a number of modifications have been introduced in~\cite{GWW12}. 
These modification also improve the performance of the algorithm in small dimensions.
 
The main differences of the algorithm presented in~\cite{GWW12} include a refined neighborhood structure that takes into account the topological structure of the point set $X$ and the usage of the concept of critical boxes as introduced in Definition~\ref{def:critical}. 
Besides this, there are few minor changes such as a variable size of the neighborhood structures and splitting the optimization process of $d^*_{\infty}(\cdot, X)$ into two separate processes for $\delta(\cdot, X)$ and $\overline{\delta}(\cdot,X)$, respectively.
Extensive numerical experiments are presented in~\cite{GWW12}. 
As mentioned above, in particular for large dimension this refined algorithm 
seems to compute better lower bounds for $d^*_{\infty}(\cdot, X)$ than the 
basic one from~\cite{WF97}.

We briefly describe the refined neighborhoods used in~\cite{GWW12}.
To this end, we first note that the neighborhoods used in Winker and Fang's algorithm do not take into account the absolute size of the gaps $x_j^{(\varphi(i+1))}-x_j^{(\varphi(i))}$ between two successive coordinates of grid points. This is unsatisfactory since large gaps usually indicate large differences in the local discrepancy function. 
Furthermore, for a grid cell $[y,z]$ in $\overline{\Gamma}(X)$
(i.e., 
$y,z\in \overline{\Gamma}(X)$ and 
$(z_j=x^{(\varphi_j(i_j))}_j ) \Rightarrow (y_j=x^{(\varphi_j(i_j+1))}_j )$
for all $j\in [d]$)
with large volume, we would expect that $\overline{\delta}(y,X)$ or $\delta(z,X)$ are also 
rather large.
For this reason, the following continuous neighborhood is considered. 
As in the algorithm by Winker and Fang we sample $mc$ coordinates $j_1, \ldots, j_{mc}$ from $\{1, \ldots, d\}$ uniformly at random.
The neighborhood of $y$ is the set 
$\NN_k^{mc}(y):=[\ell_1, u_1] \times \ldots \times [\ell_d, u_d]$ with 
\begin{align*}
[\ell_j, u_j]:=
\begin{cases}
\{y_j\}, & \text{ for } j \notin \{j_1, \ldots, j_{mc}\}\,,\\
[x^{(\varphi_j(\varphi_j^{-1}(y_j)-k\, \vee\, 1) )},
 x^{(\varphi_j(\varphi_j^{-1}(y_j)+k\,   \wedge\, n_j ))}], & \text{ for } j \in \{j_1, \ldots, j_{mc}\}\,,
\end{cases}
\end{align*}
where we abbreviate 
$\varphi_j^{-1}(y_j)-k\, \vee\, 1:=\max \{ \varphi_j^{-1}(y_j)-k, 1\}$ and, likewise, 
$\varphi_j^{-1}(y_j)+k\, \wedge\, n_j:=\min \{ \varphi_j^{-1}(y_j)+k, n_j\}$.
That is, for each of the coordinates $j \in \{j_1, \ldots, j_{mc}\}$ we do $k$ steps to the ``left'' and $k$ steps to the ``right''. 
We sample a point $\tilde{z} \in \NN_k^{mc}(y)$ (not uniformly, but according to some probability function described below) and we round $\tilde{z}$ once up and once down to the nearest critical box. For both these points $\tilde{z}^-$ and $\tilde{z}^+$ we compute the local discrepancy value, and we set as neighbor of $y$ the point 
$z \in \arg\max\{\overline{\delta}(\tilde{z}^-,X), \delta(\tilde{z}^+, X)\}$. 
The rounded grid points $\tilde{z}^+$ and $\tilde{z}^-$ are obtained by the \emph{snapping procedure} described in~\cite[Section 4.1]{GWW12}. 
We omit the details but mention briefly that rounding down to $\tilde{z}^-$ can be done deterministically ($\tilde{z}^-$ is unique), whereas for the upward rounding to $\tilde{z}^+$ there are several choices. The strategy proposed in~\cite{GWW12} is based on a randomized greedy approach.

We owe the reader the explanation of how to sample the point $\tilde{z}$.
To this end, we need to define the functions
\begin{equation*}
\Psi_j : 
[\ell_j, u_j] \to [0,1], 
r \mapsto \frac{r^d - (\ell_j)^d}{(u_j)^d - (\ell_j)^d}\,, 
j \in \{j_1, \ldots, j_{mc}\}
\end{equation*}
whose inverse functions are 
\begin{equation*}
\Psi^{-1}_j : [0,1] \to [\ell_j, u_j]\,, \, 
s\mapsto \Big( \big( (u_j)^d - (\ell_j)^d \big) s + (\ell_j)^d \Big)^{1/d}\,.
\end{equation*}
To sample $\tilde{z}$, we first sample values $s_1, \ldots, s_{mc} \in [0,1]$ independently and uniformly at random. We set 
$\tilde{z}_j:=\Psi^{-1}_j (s_j)$ for $j \in \{j_1, \ldots, j_{mc}\}$ and we set
$\tilde{z}_j:=y_j$ for $j \notin \{j_1, \ldots, j_{mc}\}$.
The intuition behind this probability measure is the fact that it favors larger coordinates than the uniform distribution. 
To make this precise, observe that in the case where $mc=d$, the probability measure on 
$\NN_k^{mc}(y)$ is induced by the affine transformation from 
$\NN_k^{mc}(y)$ to $[0,1]^d$ and the polynomial product measure
\begin{equation*}
\pi^d(\,dx) = \otimes^d_{j=1}  f(x_j)\,\lambda(\,dx_j) 
\text{ with density function }
f: [0,1] \to \R\,, \,r \mapsto d r^{d-1}
\end{equation*} on $[0,1]^d$.
The expected value of a point selected according to $\pi^d$ is $d/(d+1)$, whereas the expected value of a point selected according to the uniform measure (which implicitly is the one employed by Winker and Fang) has expected value $1/2$. Some theoretical and  experimental justifications for the choice of this probability measure are given in~\cite[Section 5.1]{GWW12}. 
The paper also contains numerical results for the computation of rank-1 lattice rules, Sobol' sequences, Faure sequences, and Halton sequences up to dimension $50$. The new algorithm based on threshold accepting outperforms all other algorithms that we are aware of. 
For more recent applications of this algorithm we refer the reader to Section~\ref{GENOPT}, where we present one example that indicates the future potential of this algorithm.

\subsubsection{An Approach Based on Genetic Algorithms}
\label{GenAlg}

A different randomized algorithm to calculate lower bounds for the star discrepancy of a given point set was proposed
by M.~Shah in~\cite{Sha10}. 
His algorithm is a \emph{genetic algorithm}.
Genetic algorithms are a class of local search heuristics that have been introduced in the sixties and seventies of the last century, cf.~\cite{Hol75} for the seminal work on evolutionary and genetic algorithms.
In the context of geometric discrepancies, genetic algorithms have also been successfully applied to the design of low-discrepancy sequences (cf. Section~\ref{GENOPT} for more details). 
In this section, we provide a very brief introduction into this class of algorithms, and we outline its future potential in the analysis of discrepancies.

While threshold accepting algorithms take their inspiration from physics, genetic algorithms are inspired by biology.
Unlike the algorithms presented in the previous section, 
in genetic algorithms, we typically do not keep only one solution candidate at a time, 
but we maintain a whole set of candidate solutions instead. 
This set is referred to as a \emph{population} in the genetic algorithms literature.
Algorithm~\ref{alg:ga} provides a high-level pseudo-code for genetic algorithms, adjusted again to the problem of computing lower bounds for the star discrepancy of a given point configuration. More precisely, this algorithm is a so-called $(\mu+\lambda)$ evolutionary algorithm (with $\lambda=C+M$ in this case). 
Evolutionary Algorithms are genetic algorithms that are based on Darwinian evolution principles. 
We discuss the features of such algorithms further below. 

\begin{algorithm2e}[h!]
\textbf{Initialization:}
Select 
$y^{(1)}, \ldots, y^{(\mu)} \in \overline{\Gamma}(X)$ uniformly at random and compute 
$d^*_{\infty}(y^{(1)},X), \ldots, d^*_{\infty}(y^{(\mu)},X)$\label{ga:init}\;
\For{$i=1,2, \ldots, I$}{
\textbf{Crossover Step:}
	\For{$j=1,2, \ldots, C$}{
		Select two individuals 
		$y, y' \in \{y^{(1)}, \ldots, y^{(\mu)}\}$ at random 
		and create from $y$ and $y'$ a new individual $z^{(j)}$ by recombination\;
		Compute $d^*_{\infty}(z^{(j)},X)$\label{ga:crossover}\;
		}
\textbf{Mutation Steps:}
	\For{$j=1,2, \ldots, M$}{
		Select an individual 
		$y \in \{y^{(1)}, \ldots, y^{(\mu)}, z^{(1)}, \ldots, z^{(C)}\}$ at random\;
		Sample a neighbor $n^{(j)}$ from $y$ and 
		compute $d^*_{\infty}(n^{(j)},X)$\label{ga:mutation}\;	
		}
\textbf{Selection step:}\\
\Indp\Indp
\label{ga:selection}
From $\{y^{(1)}, \ldots, y^{(\mu)}, z^{(1)}, \ldots, z^{(C)}, n^{(1)}, \ldots, n^{(M)}\}$ select---based on their local discrepancy values $d^*_{\infty}(\cdot,X)$---a subset of size $\mu$\;
Rename these individuals $y^{(1)}, \ldots, y^{(\mu)}$\;
\Indm\Indm
}
Output $d^*_{\infty}(y,X)$\;
\caption{Simplified scheme of a $(\mu+\lambda)$ evolutionary algorithm for the computation of star discrepancy values. 
$I$ is the runtime of the algorithm (i.e., the number of iterations), 
$C$ is the number of crossover steps per generation, and 
$M$ is the number of mutation steps.}
\label{alg:ga}
\end{algorithm2e}

As mentioned above, the nomenclature used in the genetic algorithms literature deviates from the standard one used in introductory books to algorithms. 
We briefly name a few differences. 
In a high-level overview, a genetic algorithm runs in several \emph{generations} (steps, iterations), 
in which the solution candidates (\emph{individuals}) from the current population are being \emph{recombined} and \emph{mutated}. 

We initialize such an algorithm by selecting $\mu$ individuals at random. They form the \emph{parent population} (line~\ref{ga:init}). 
To this population we first apply a series of \emph{crossover} steps (line~\ref{ga:crossover}), through which two (or more, depending on the implementation) individuals from the parent population are recombined. 
A very popular recombination operator is the so-called \emph{uniform crossover} through which two search points $y, y' \in \overline{\Gamma}(X)$ are recombined to some search point $z$ by setting $z_j:=y_j$ with probability $1/2$, and by setting $z_j=y'_j$ otherwise. Several other recombination operators exist, and they are often adjusted to the problem at hand. 
The random choice of the parents to be recombined must not be uniform, and it may very well depend on the local discrepancy values $d^*_{\infty}(y^{(1)},X), \ldots, d^*_{\infty}(y^{(\mu)},X)$, which are also referred to as the \emph{fitness} of these individuals.

Once $C$ such recombined individuals $z^{(1)}, \ldots, z^{(C)}$ have been created and evaluated, we enter the \emph{mutation} step, 
in which we compute for a number $M$ of search points one neighbor each, cf. line~\ref{ga:mutation}.  
Similarly as in the threshold accepting algorithm, Algorithm~\ref{alg:rsh}, it is crucial here to find a meaningful notion of neighborhood. This again depends on the particular application. For our problem of computing lower bounds for the star discrepancy value of a given point configuration, we have presented two possible neighborhood definitions in Section~\ref{threshold}. 
The newly sampled search points are evaluated, and from the set of old and new search points a new population is selected in the \emph{selection} step, line~\ref{ga:selection}. The selection typically depends again on the fitness values of the individuals. If always the $\mu$ search points of largest local discrepancy value are selected, we speak of an \emph{elitist selection scheme}. This is the most commonly used selection operator in practice. However, to maintain more diversity in the population, it may also be reasonable to use other selection schemes, or to randomize the decision.

In the scheme of Algorithm~\ref{alg:ga}, the algorithm runs for a fixed number $I$ of iterations. However, as we mentioned in the beginning of Section~\ref{threshold}, Shah's algorithm works slightly different. 
His algorithm stops when no improvement has happened for some $t$ iterations in a row, where $t$ is a parameter to be set by the user.

The details of Shah's implementation can be found in~\cite{Sha10}. 
His algorithm was used in~\cite{OSG12} for the approximation of the star discrepancy value of 10-dimensional permuted Halton sequences. 
Further numerical results are presented in~\cite{Sha10}.
The results reported in~\cite{Sha10}, however, are not demanding enough to make a proper comparison between his algorithm and the ones presented in the previous section.
On the few instances where a comparison seems meaningful, the results based on the threshold accepting algorithms outperform the ones of the genetic algorithm, cf.~\cite[Section 6.4]{GWW12} for the numerical results. 
Nevertheless, it seems that the computation of star discrepancy values with genetic and 
evolutionary algorithms is a promising direction and further research would be of interest.

\subsubsection*{Notes}
In the literature one can find some attempts to compute for $L_\infty$-discrepancies
the smallest possible discrepancy value of all $n$-point configurations. 
For the star discrepancy  B.~White determined in \cite{Whi77} the smallest possible
discrepancy values for $n = 1,2,\ldots,6$ in dimension $d=2$, and T.~Pillards, B.~Vandewoestyne,
and R.~Cools in \cite{PVC06} for $n=1$ in arbitrary dimension $d$.
G.~Larcher and F.~Pillichshammer provided in \cite{LP07} for the star and the extreme discrepancy
the smallest discrepancy values for $n=2$ in arbitrary dimension $d$.
Furthermore, they derived for the isotrope discrepancy the smallest value for $n=3$ in dimension
$d=2$ and presented good bounds for the smallest value for $n=d+1$ in arbitrary dimension
$d\ge 3$. (Note that the isotrope discrepancy of $n<d+1$ points in dimension $d$ is necessarily
the worst possible discrepancy $1$.)

\section{Calculation of $L_p$-Discrepancies for $p\notin \{2,\infty\}$}
\label{L_P}

This section is the shortest section in this book chapter. The reason for this is not that the computation
of $L_p$-discrepancies, $p\notin \{2,\infty\}$, is an easy task which is quickly explained, but rather that not much work has been done so far and that therefore, unfortunately, 
not much is known to date.
We present here a generalization of Warnock's formula for even $p$. 

Let $\gamma_1 \ge \gamma_2 \ge \cdots \ge \gamma_d \ge 0$, and let $(\gamma_u)_{u\subseteq \{1,\ldots,d\}}$ be the corresponding
product weights; i.e., $\gamma_u = \prod_{j\in u} \gamma_j$ for all $u$. For this type of weights 
G.~Leobacher and F.~Pillichshammer derived  a formula for the weighted $L_p$-star discrepancy $d^*_{p, \gamma}$ for arbitrary even positive integers $p$
that generalizes the formula (\ref{war_weight}):
\begin{equation}\label{leo_pil}
\begin{split}
 &(d^*_{p,\gamma}(X))^p = \\
&\sum^p_{\ell = 0} \binom{p}{\ell} 
\left( - \frac{1}{n} \right)^{\ell} 
\sum_{(i_1,\ldots,i_{\ell}) \in \{1,\ldots,n\}^{\ell}} \prod^d_{j = 1} 
\left( 1 + \gamma_j \frac{1-\max_{1\le k \le \ell} (x^{(i_k)}_j)^{p-\ell +1}}{p-\ell +1} \right),
\end{split}
\end{equation}
see \cite[Thm.~2.1]{LP03} (notice that in their definition of the weighted discrepancy they replaced the weights $\gamma_u$ appearing in our definition (\ref{weighted_disc}) 
by $\gamma_u^{p/2}$). Recall that in the special case where $\gamma_1 = \gamma_2 = \cdots = \gamma_d = 1$, the weighted $L_p$-star discrepancy $d^*_{p,\gamma}(X)$
coincides with Hickernell's modified $L_p$-discrepancy. Using the approach from \cite{LP03} one may derive analogous formulas for the $L_p$-star and $L_p$-extreme discrepancy.
The formula (\ref{leo_pil}) can be evaluated directly at cost $O(d n^p)$, where the implicit constant in the big-$O$-notation depends on $p$. Obviously, the computational
burden will become infeasible even for moderate values of $n$ if $p$ is very large. 

Apart from the result in \cite{LP03} we are not aware of any further results that are helpful for the calculation or approximation of weighted $L_p$-discrepancies. 

\subsubsection*{Notes}
The calculation of \emph{average $L_p$-discrepancies} of Monte Carlo point sets attracted 
reasonable attention in the literature. One reason for this is that  $L_p$-discrepancies
such as, e.g., the $L_p$-star or $L_p$-extreme discrepancy,
converge to the corresponding $L_\infty$-discrepancy if $p$ tends to infinity, see, e.g., \cite{Gne05, HNWW01}. Thus the $L_p$-discrepancies 
can be used to derive results for the corresponding $L_\infty$-discrepancy.
In the literature one can find explicit representations of  average $L_p$-discrepancies in terms of sums involving Stirling numbers of the first 
and second kind as well as upper bounds and formulas for their asymptotic behavior, see, e.g., \cite{Gne05, HNWW01, HW12, LP03, Stei10}.

\section{Some Applications}
\label{APPLICATIONS}

We present some applications of algorithms that approximate discrepancy measures.
The aim is to show here some more recent examples of how the algorithms are used in 
practice, what typical instances are, and what kind of problems occur. Of course, the selection of topics
reflects the interest of the authors and is far from being complete. Further
applications can, e.g., be found in the design of computer experiments (``experimental
design''), see \cite{FLWZ00, Lem09}, the generation of pseudo-random numbers,
see \cite{Knu81, LH98, Nie92, Tez95}, or in computer graphics, see \cite{DEM96}. 

\subsection{Quality Testing of Point Sets}
\label{QUALTEST}

A rather obvious application of discrepancy approximation algorithms is to
estimate the quality of low-discrepancy point sets or, more generally, deterministic or randomized quasi-Monte Carlo point configurations.

Thi\'emard, e.g., used his algorithm from \cite{Thi01a}, which we described in Section~\ref{bracketing},
to provide upper and lower bounds
for the star discrepancy of Faure $(0,m,s)$-nets \cite{Fau82} with sample sizes varying from $1,048,576$ points in the smallest dimension $d=2$ to $101$ points in the largest dimension $100$ (where, not very surprisingly, the resulting discrepancy is almost $1$).
He also uses his algorithm to compare the performance of two sequences of pseudo-random numbers, generated by Rand() and MRG32k3a \cite{LEc99}, and Faure, Halton \cite{Hal60}, and 
Sobol' \cite{Sob67} sequences by calculating bounds for their star discrepancy for sample
sizes between $30$ and $250$ points in dimension $7$. 

For the same instances Thi\'emard was able to calculate the exact star discrepancy of the Faure, Halton and Sobol' sequences by using his algorithm from \cite{Thi01b}, which we described in Section~\ref{ILP}, see \cite[Sect.~4.3]{Thi01b}. 

In the same paper he provided the exact star discrepancy of Faure $(0,m,s)$-nets ranging from sample sizes of $625$ points in dimension $4$ to $169$ points in dimension $12$.
These results complement the computational results he achieved for (less demanding)
instances in \cite{Thi00} with the help of the algorithm presented there.

Algorithms to approximate discrepancy measures were also used to judge the 
quality of different types of generalized Halton sequences. Since these sequences are also
important for our explanation in Section~\ref{GENOPT}, we give a definition here.

Halton sequences are a generalization of the one-dimensional van der Corput sequences. 
For a prime base $p$ and a positive integer $i \in \N$ let 
$i=d_{k}d_{k-1}\ldots d_2d_1$ be the digital expansion of $i$ in base $p$. That is, let $d_1, \ldots, d_k$ be such that $i=\sum_{\ell=1}^k{d_{\ell}p^{\ell-1}}$. 
Define the \emph{radical inverse function} $\phi_p$ in base $p$ by
\begin{equation}\label{rad_inv}
\phi_p(i):=\sum_{\ell=1}^k{d_{\ell}p^{-\ell}}\,.
\end{equation}
Let $p_j$ denote the $j$th prime number. 
The $i$th element of the $d$-dimensional \emph{Halton sequence} is defined as
\begin{align*}
x^{(i)}:=(\phi_{p_1}(i), \ldots, \phi_{p_d}(i))\,.
\end{align*}
The Halton sequence is a low-discrepancy sequence, i.e., its first $n$ points
$X=(x^{(i)})^n_{i=1}$ in dimension $d$ satisfy the star discrepancy bound
\begin{equation}\label{halton_bound}
 d^*_\infty(X) = O \big( n^{-1}\,\ln(n)^d \big).
\end{equation}
In fact, the Halton sequence was the first construction for which (\ref{halton_bound}) 
was verified for any dimension $d$ \cite{Hal60}, and up to now there is no sequence
known that exhibits a better asymptotical behavior than (\ref{halton_bound}).

\begin{figure}
\includegraphics[scale=0.9]{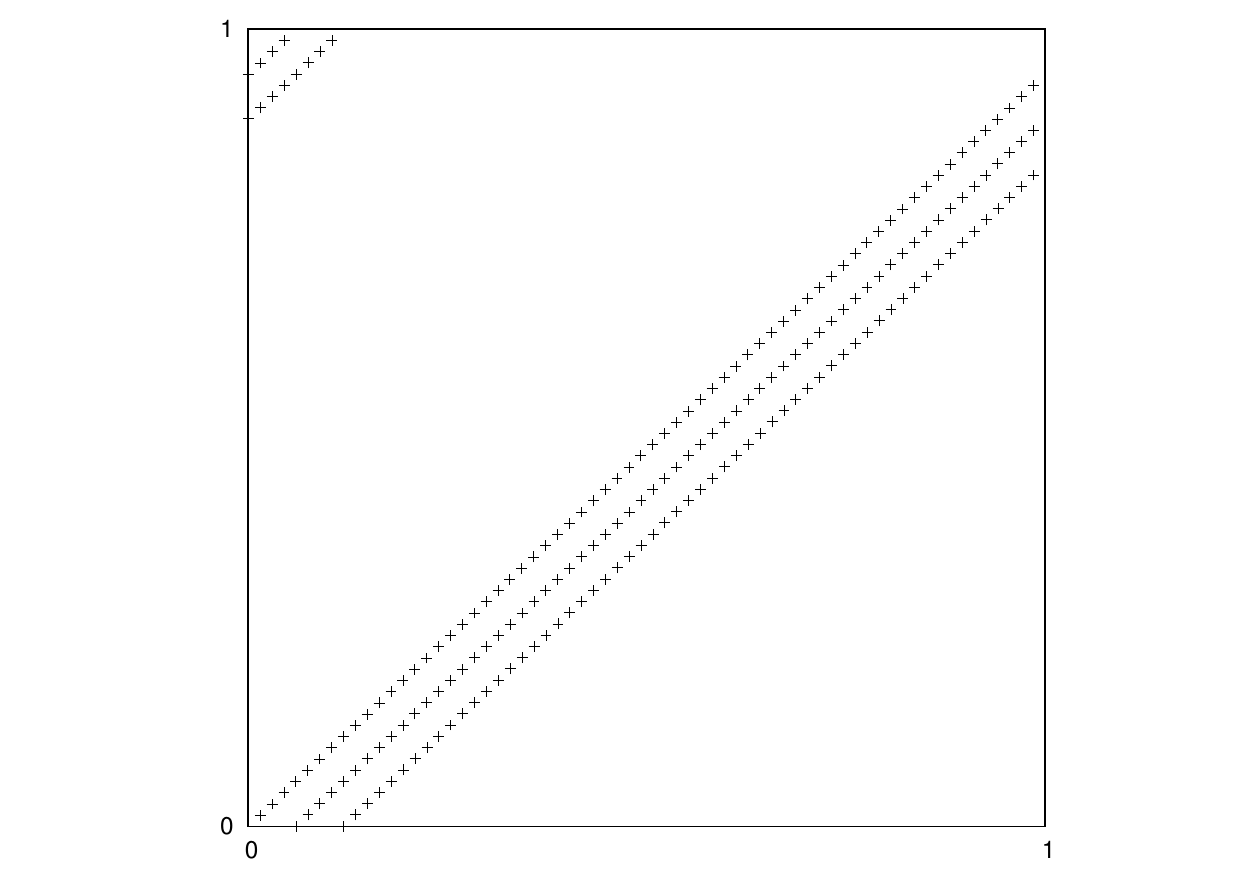}
\caption{The first 200 points of the 20-dimensional Halton sequence, projected to
  dimensions 19 and 20.}
\label{fig:halton}
\end{figure}

Nevertheless, for higher dimension $d$ the use of larger prime $p_j$ bases leads to 
some irregularity phenomenon, which is often referred to as 
\emph{high correlation between higher bases} \cite{BW79}. 
This phenomenon can easily be visualized by looking at two-dimensional projections
of Halton sequences over higher coordinates, see Figure~\ref{fig:halton}.

To reduce these undesirable effects, Braaten and Weller \cite{BW79} suggested to 
use generalized Halton sequences.  To obtain such a sequence, one applies digit 
permutations to the Halton sequence:
For a permutation $\pi_p$ of $\{0,1,\ldots,p-1\}$ with fixpoint $\pi_p(0)=0$
define in analogy to (\ref{rad_inv}) the \emph{scrambled radical inverse function}
$\phi^{\pi}_p$ by
\begin{equation*}
\phi^{\pi}_p(i) :=\sum_{\ell=1}^k{\pi_p(d_{\ell})p^{-\ell}}.
\end{equation*}
The $i$th element of the \emph{generalized} (or \emph{scrambled}) \emph{Halton sequence} 
in $d$ dimensions is then defined by
\begin{align}
\label{generalHalton}
x^{(i)}(\Pi):=(\phi^{\pi_{p_1}}_{p_1}(i), \ldots, \phi^{\pi_{p_d}}_{p_d}(i))\,,
\end{align}
where we abbreviate $\Pi:=(\pi_{p_1}, \ldots, \pi_{p_d})$.

In several publications different permutations were proposed to shuffle the digits of 
the Halton sequence, see, e.g., \cite{Ata04, CMW05, DGTL12, FL09, OSG12, VC06, WH00}
and the literature mentioned therein.
Many authors tried to compare the quality of some of these permutations by considering
theoretical or numerical discrepancy bounds or other numerical tests.

Vandewoestyne and Cools \cite{VC06}, e.g., calculated the $L_2$-star and $L_2$-extreme
discrepancy of several generalized Halton sequences by using Warnock's formula (\ref{warnock}) 
and the corresponding modification (\ref{war_extreme}), respectively.
The instances they studied ranged from $10,000$ sample points in dimension $8$
to $1,000$ points in dimension $64$. They reported that those generalized Halton sequences
performed best which are induced by the simple \emph{reverse permutations}
$\pi_p(0)=0$ and $\pi_p(j)=p-j$ for $j=1,\ldots,p-1$. These sequences
showed usually a smaller $L_2$-star and $L_2$-extreme discrepancy as, e.g., 
the original Halton sequences, the generalized Halton sequences proposed in
\cite{ Ata04,BW79} or the randomized Halton sequences from \cite{WH00}; for more
details see \cite{VC06}.

\"Okten, Shah, and Goncharov \cite{OSG12} tried to compare different generalized Halton sequences
by calculating bounds for their $L_\infty$-star discrepancy. For the calculations of the
upper bounds for the star discrepancy they used Thi\'emard's algorithm presented in
\cite{Thi01a}. For the calculation of the lower bounds they used Shah's algorithm
\cite{Sha10}, which gave consistently better lower bounds than Thi\'emard's algorithm.
They did the calculation for instances of $100$ points in dimension $5$ and $10$
and studied different cases of prime bases. So they considered, e.g., different generalized Halton sequences in $d=5$ where the prime bases are the $46$th to $50$th
prime numbers $p_{46},\ldots,p_{50}$ (which corresponds to the study of the projections
over the last $5$ coordinates of $50$-dimensional sequences induced by the first $50$ primes).
\"Okten et al. found that, apart from the original Halton sequence, the reverse
permutations led to the highest discrepancy bounds, in contrast to their very low
$L_2$-star and $L_2$-extreme discrepancy values. (This indicates again that the conventional
$L_2$-star and $L_2$-extreme discrepancy are of limited use for judging the uniformity of point sets,
cf. also the notes at the end of Section~\ref{Ltwo}.)
Furthermore, they reported that the 
average star discrepancy bounds of generalized Halton sequences induced by random
permutations were rather low. For more details we refer to \cite{OSG12}.

Algorithms to approximate star discrepancies were used in \cite{DGW10} to compare
the quality of small samples of classical low-discrepancy points and Monte Carlo
points with new point sets generated in a randomized and in a deterministic fashion.
Here, ``small samples'' has to be understood as point sets whose size is small compared
to the dimension, say, bounded by a constant times $d$ or $d^{3/2}$. Notice that for this sample sizes asymptotic bounds like (\ref{halton_bound}) do not provide any helpful 
information. The classical point sets were Halton-Hammersley points \cite{DP10, Hal60, Nie92}, Sobol' points \cite{JK08, Sob67, SAKK11}, and Faure points shuffled by a Gray code
\cite{Fau82,Tez95, Thi98}. The algorithms that generated the new point sets rely on certain random experiments based on randomized rounding with hard constraints \cite{Doe06, Sri01}
and large deviation bounds that guarantee small discrepancies with high probability.
The deterministic versions of these algorithms make use of derandomization techniques
from \cite{Doe06,DW09}. The concrete algorithms are randomized versions of the component-by-component (CBC) construction proposed in \cite{DGKP08} and implemented in \cite{DGW09} and a randomized and derandomized version of the algorithm proposed in
\cite{DG08, DGW10}. The CBC construction has the advantage that it is faster and can
be used to extend given low-discrepancy point sets in the dimension, but its theoretical
error bound is worse than the one for the latter algorithm.

In the numerical experiments different methods to approximate the discrepancy were used, depending on the instance sizes. For instances of $145 - 155$ points in dimension $7$ and 
$85 - 95$ points in dimension $9$ exact discrepancy calculations were performed by using
the method of Bundschuh and Zhu \cite{BZ93}. For larger instances this was infeasible, so
for instances ranging from $145 - 155$ points in dimension $9$ to $65 - 75$ points in dimension $12$ a variant of the algorithm of Dobkin, Eppstein, and Mitchell \cite{DEM96} was used that gained speed by allowing for an imprecision of order $d/n$. 
It should be noted that since the publication of these experiments, the implementation of
this algorithm has been improved; a version that attains the same speed without making
the $d/n$-order sacrifice of precision is available at the third author's homepage at
\texttt{http://www.mpi-inf.mpg.de/$\sim$wahl/}. 
For the final set of instances, ranging from $145 - 155$ points in dimension $12$ to
$95 - 105$ points in dimension $21$, the authors relied on upper bounds from Thi\'emard's algorithm from \cite{Thi01a} and on lower bounds from a randomized algorithm based on threshold accepting from \cite{Win07}, which is a precursor of the algorithm from \cite{GWW12}. 
Similarly as in \cite{OSG12}, the randomized algorithm led consistently to better lower
bounds than Thi\'emard's algorithm. For the final instances of $95 - 105$ points in dimension $21$ the gaps between the upper and lower discrepancy bounds were roughly
of the size $0.3$, thus these results can only be taken as very coarse indicators for the quality of the considered point sets.

As expected, the experiments indicate that for small dimension and relatively large sample
sizes the Faure and Sobol' points are superior, but the derandomized algorithms (and also
their randomized versions) performed better than Monte Carlo, Halton-Hammersley, and
Faure points in higher dimension for relatively small sample sizes. From the classical 
low-discrepancy point sets the Sobol' points performed best and were competitive 
for all instances. For more details see \cite[Sect.~4]{DGW10}.

\subsection{Generating Low-Discrepancy Points via an Optimization Approach}
\label{GENOPT}

In Section~\ref{GenAlg} we have seen that genetic algorithms can be used for an approximation of the star discrepancy values of a given point configuration. 
Here in this section we present another interesting application of biology-inspired algorithms in the field of geometric discrepancies. 

Whereas the works presented in Sections~\ref{approximation} focus mainly on the computation of star discrepancy values, the authors of~\cite{DGTL12}, 
F.-M. de Rainville, C. Gagn\'e, O. Teytaud, and D. Laurendeau, apply evolutionary algorithms (cf. Section~\ref{GenAlg} for a brief introduction)
to generate \emph{low discrepancy point configurations.} 
Since the fastest known algorithms to compute the exact star discrepancy values have running time exponential in the dimension (cf. Section~\ref{DEM} for a discussion of this algorithm and Section~\ref{Complexity} for related complexity-theoretic results), and the authors were---naturally---interested in a fast computation of the results (cf.~\cite[Page 3]{DGTL12}), the point configurations considered in~\cite{DGTL12} are optimized for Hickernell's modified $L_2$-star discrepancies, see equation (\ref{hickernell_disc}) in the introduction to this chapter. 
As mentioned in Section~\ref{Ltwo}, $L_2$-star discrepancies can be computed efficiently via Warnock's formula, cf. equation~(\ref{warnock}).
Similarly, Hickernell's modified $L_2$-discrepancy can be computed efficiently with $O(dn^2)$ arithmetic operations. 

The point sets generated by the algorithm of de Rainville, Gagn\'e, Teytaud, and Laurendeau are generalized Halton sequences.
As explained in Section~\ref{QUALTEST}, generalized Halton sequences are digit-permuted versions of the Halton sequence, cf. equation (\ref{generalHalton}).
The aim of their work is thus to find permutations $\pi_{p_1}, \ldots, \pi_{p_d}$ such that the induced generalized Halton sequence has small modified $L_2$-discrepancy.
They present numerical results for sequences with $2,500$ points in dimensions 20, 50, and 100 and they compare the result of their evolutionary algorithm with that of standard algorithms from the literature. 
Both for the modified $L_2$-discrepancy as well as for the $L_2$-star discrepancy the results indicate that the evolutionary algorithm is at least comparable, if not superior, to classic approaches.

The evolutionary algorithm of de Rainville, Gagn\'e, Teytaud, and Laurendeau uses both the concept of crossover and mutation (as mentioned above, see Section~\ref{GenAlg} for a brief introduction into the notations of genetic and evolutionary algorithms). 
The search points are permutations, or, to be more precise configurations of permutations. 
The mutation operator shuffles the values of the permutation by deciding independently for each value in the domain of the permutation if its current value shall be swapped, and if so, with which value to swap it. 
The crossover operator recombines two permutations by iteratively swapping  pairs of values in them. 

We recall that in the definition of the generalized Halton sequence, equation (\ref{generalHalton}), we need a vector (a \emph{configuration}) $\Pi=(\pi_{p_1}, \ldots, \pi_{p_d})$ of permutations of different length. 
The length of permutation $\pi_{p_i}$ is determined by the $i$th prime number $p_i$. 
These configurations are extended component-by-component. 
That is, the evolutionary algorithm first computes an optimal permutation $\pi_2$ on $\{0,1\}$ and sets $\Pi:=(\pi_2)$. 
It then adds to $\Pi$, one after the other, 
the optimal permutation $\pi_3$ for $\{0,1,2\}$, the one $\pi_5$ on $\{0,1,2,3,4\}$, and so on. 
Here, ``optimal'' is measured with respect to the fitness function, which is simply the modified (Hickernell) $L_2$-discrepancy of the point set induced by $\Pi$. We recall from equation (\ref{generalHalton}) that a vector $\Pi$ of permutations fully determines the resulting generalized Halton sequence.

The iterative construction of $\Pi$ allows the user to use, for any two positive integers $d<D$, the same permutations $\pi_{p_1},\ldots,\pi_{p_d}$ in the first $d$ dimensions of the two sequences in dimensions $d$ and $D$, respectively.
 
The population sizes vary from 500 individuals for the 20-dimensional point sets to 750 individuals for the 50- and 100-dimensional point configurations. 
Similarly, the number of generations that the author allow the algorithm for finding the best suitable permutation ranges from 500 in the 20-dimensional case to 750 for the 50-dimensional one, and 1,000 for the 100-dimensional configuration.  

It seems evident that combining the evolutionary algorithm of de Rainville, Gagn\'e, Teytaud, and Laurendeau with the approximation algorithms presented in Section~\ref{approximation}
is a promising idea to generate low star-discrepancy point sequences. This is ongoing work of Doerr and de Rainville~\cite{DW12}. 
In their work, Doerr and de Rainville use the threshold accepting algorithm from~\cite{GWW12} (cf. Section~\ref{threshold}) for the intermediate evaluation of the candidate permutations. 
Only the final evaluation of the resulting point set is done by the exact algorithm of Dobkin, Eppstein, and Mitchell~\cite{DEM96} (cf. Section~\ref{DEM}).
This allows the computation of low star-discrepancy sequences in dimensions where an exact evaluation of the intermediate permutations of the generalized Halton sequences would be far too costly.
Using this approach, one can, at the moment, not hope to get results for such dimensions where the algorithm of Dobkin, Eppstein, and Mitchell does not allow for a final evaluation of the point configurations. However, due to the running time savings during the optimization process, one may hope to get good results for moderate dimensions for up to, say, 12 or 13. 
Furthermore, it seems possible to get a reasonable indication of good generating permutations in (\ref{generalHalton}) for dimensions much beyond this, if one uses only the threshold accepting algorithm to guide the search. As is the case for all applications of the approximation algorithms presented in Section~\ref{approximation}, in such dimensions, however, we do not have a \emph{proof} that the computed lower bounds are close to the exact discrepancy values.

We conclude this section by emphasizing that, in contrast to the previous subsection, where 
the discrepancy approximation algorithms were only used to \emph{compare} the quality 
of different point sets, here in this application the calculation of the discrepancy is an integral part of the optimization
process to \emph{generate} good low discrepancy point sets.

\subsubsection*{Notes} 
Also other researchers used algorithms to approximate discrepancy measures in combination with optimization 
approaches to generate low discrepancy point sets. A common approach described in \cite{DEM96} is to 
apply a multi-dimensional optimization algorithm  repeatedly to randomly jiggled versions of low discrepancy
point sets to search for smaller local minima of the discrepancy function.
In \cite{FLWZ00, LSW10} the authors used the optimization heuristic threshold accepting to generate experimental
designs with small discrepancy. The discrepancy measures considered in \cite{FLWZ00} are the $L_\infty$- and 
the $L_2$-star discrepancy, as well as the centered, the symmetric, and the modified $L_2$-discrepancy.
The discrepancy measure considered in \cite{LSW10} is the central composite discrepancy for a flexible region
as defined in \cite{CH10}.

\subsection{Scenario Reduction in Stochastic Programming}
\label{SCENARIO}

Another interesting application of discrepancy theory is in the area of \emph{scenario reduction}. We briefly describe the underlying problem, and we illustrate the importance of discrepancy computation in this context. 

Many real-world optimization problems, e.g., of financial nature, are subject to stochastic uncertainty. 
\emph{Stochastic programming} is a suitable tool to model such problems. 
That is, in stochastic programming we aim at minimizing or maximizing the expected value of a random process, typically taking into account several constraints. 
These problems, of both continuous and discrete nature, are often infeasible to solve optimally. 
Hence, to deal with such problems in practice, one often resorts to approximating the underlying (continuous or discrete) probability distribution by a discrete one of much smaller support. 
In the literature, this approach is often referred to as \emph{scenario reduction}, cf.~\cite{HKR09, Rom09} and the numerous references therein.

For most real-world situations, the size of the support of the approximating probability distribution has a very strong influence on (i.e., ``determines'') the complexity of solving the resulting stochastic program. 
On the other hand, it is also true for all relevant similarity measures between the original and the approximating probability distribution that a larger support of the approximating distribution allows for a better approximation of the original one. 
Therefore, we have a fundamental trade-off between the running time of the stochastic program and the quality of the approximation. 
This trade-off has to be taken care of by the practitioner, and his decision typically depends on his time constraints and the availability of computational resources. 

To explain the scenario reduction problem more formally, we need to define a suitable measure of similarity between two distributions. 
For simplicity, we regard here only distance measures for two discrete distributions whose support is a subset of $[0,1]^d$. 
Unsurprisingly, a measure often regarded in the scenario reduction literature is based on discrepancies, cf. again~\cite{HKR09, Rom09} and references therein. 
Formally, let $P$ and $Q$ be two discrete Borel distributions on $[0,1]^d$ and let $\BB$ be a system of Borel sets of $[0,1]^d$. 
The $\BB$-discrepancy between $P$ and $Q$ is 
\begin{align*} 
\disc_\infty(\BB;P,Q)= \sup_{B \in \BB} |P(B) - Q(B)|\,.
\end{align*}
The right choice of the set $\BB$ of test sets depends on the particular application. Common choices for $\BB$ are 
\begin{itemize}
\item $\CC_d$, the class of all axis-parallel
half-open boxes,
\item $\RR_d$, the class of all half-open axis-parallel boxes, 
\item $\PP_{d,k}$, the class of all polyhedra having at most $k$ vertices, and
\item $\II_d$, the set of all closed, convex subsets,
\end{itemize}
which were introduced in Section~\ref{INTRO}. 

In the scenario reduction literature, the discrepancy measure associated with $\CC_d$ is referred to as star discrepancy, \emph{uniform}, or \emph{Kolmogorov metric};
the one associated with $\II_d$ is called \emph{isotrope discrepancy}, whereas
the distance measure induced by 
$\RR_d$ is simply called the \emph{rectangle discrepancy} measure, and the one induced by $\PP_{d,k}$ as \emph{polyhedral discrepancy}.

With these distance measures at hand, we can now describe the resulting approximation problem. 
For a given distribution $P$ of support $\{x^{(1)}, \ldots, x^{(N)}\}$ we aim at finding, for a given integer $n<N$, a distribution $Q$ such that 
(i) the support $\{y^{(1)}, \ldots, y^{(n)} \}$ of $Q$ is a subset of the support of $P$ of size $n$ and that 
(ii) $Q$ minimizes the distance between $P$ and $Q$. 

Letting 
$\delta(z)$ denote the Dirac measure placing mass one at point $z$, 
$p^{(i)}:=P(x^{(i)})$, $1 \leq i \leq N$,
and 
$q^{(i)}:=P(x^{(i)})$, $1 \leq i \leq n$, we can formulate this minimization problem as
\begin{align}
\label{eq:scenario}
& \text{minimize } \quad
	\disc_\infty(\BB;P,Q)=\disc_\infty(\BB; \sum_{i=1}^N{p^{(i)} \delta(x^{(i)})}, 	
											 \sum_{i=1}^n{q^{(i)} \delta(y^{(i)})})\\
&\nonumber  \text{subject to }
\quad
\{y^{(1)}, \ldots, y^{(n)} \}  \subseteq \{x^{(1)}, \ldots, x^{(N)} \}\,,	\\		&\nonumber \quad\quad\quad\quad\quad\quad\quad\quad\quad\quad q^{(i)} >0, \quad 1 \leq i \leq n\,, \\
&\nonumber \quad\quad\quad\quad\quad\quad\quad\quad\quad \sum_{i=1}^n{q^{(i)}}=1.			 
\end{align}
This optimization problem can be decomposed into an \emph{outer} optimization problem of finding the support $\{y^{(1)}, \ldots, y^{(n)} \}$ and an \emph{inner} optimization problem of finding---for fixed support $\{y^{(1)}, \ldots, y^{(n)} \}$---the optimal probabilities $q^{(1)}, \ldots, q^{(n)}$.

Some heuristics for solving both the inner and the outer optimization problems have been suggested in~\cite{HKR09}. In that paper, Henrion, K\"uchler, and R\"omisch mainly regard the star discrepancy measure, but results for other distance measures are provided as well; see also the paper~\cite{HKR08} by the same set of authors for results on minimizing the distance with respect to polyhedral discrepancies. 

We present here a few details about the algorithms that Henrion, K\"uchler, and R\"omisch developed for the optimization problem (\ref{eq:scenario}) with respect to the star discrepancy. A more detailed description can be found in their paper~\cite{HKR09}. 

Two simple heuristics for the outer optimization problem are \emph{forward} and \emph{backward selection.}
In forward selection we start with an empty support set $Y$, and we add to $Y$, one after the other, the element from $X=\{x^{(1)}, \ldots, x^{(N)}\}$ that minimizes the star discrepancy between $P$ and $Q$, for an optimal allocation $Q$ of probabilities $q^{(1)}, \ldots, q^{(|Y|)}$ to the points in $Y$. We stop this forward selection when $Y$ has reached its desired size, i.e., when $|Y|=n$.

Backward selection follows an orthogonal idea. We start with the full support set $Y=X$ and we remove from $Y$, one after the other, the element such that an optimal probability distribution $Q$ on the remaining points of $Y$ minimizes the star discrepancy $\disc_\infty(\CC_d;P,Q)$. Again we stop once $|Y|=n$.
It seems natural that forward selection is favorable for values $n$ that are much smaller than $N$, and, likewise, backward selection is more efficient when the difference $N-n$ is small. 

For the inner optimization problem of determining the probability distribution $Q$ for a fixed support $Y$, Henrion, K\"uchler, and R\"omisch formulate a linear optimization problem.
Interestingly, independently of the discrepancy community, the authors develop to this end the concept of \emph{supporting} boxes---a concept that coincides with the critical boxes introduced in Definition~\ref{def:critical}, Section~\ref{EAC}. 
Using these supporting boxes, they obtain a linear program that has much less constraints than the natural straightforward formulation resulting from problem (\ref{eq:scenario}).
However, the authors remark that not the solution to the reduced linear program itself is computationally challenging, but the computation of the supporting (i.e., critical) boxes. 
Thus, despite significantly reducing the size of the original problem (\ref{eq:scenario}), introducing the concept of critical boxes alone is not sufficient to considerably reduce the computational effort required to solve problem (\ref{eq:scenario}). 
The problems considered in~\cite{HKR09} are thus only of moderate dimension and moderate values of $N$ and $n$. 
More precisely, results for four and eight dimensions with $N=100$, $N=200$, and $N=300$ points are computed. The reduced scenarios have $n=10$, $20$, and $30$ points. 

Since most of the running time is caused by the computation of the star discrepancy values, it seems thus promising to follow a similar approach as in Section~\ref{GENOPT} and to use one of the heuristic approaches for star discrepancy estimation presented in Section~\ref{approximation} for an intermediate evaluation of candidate probability distributions $Q$. This would allow us to compute the exact distance of $P$ and $Q$ only for the resulting approximative distribution $Q$. 

We conclude this section by mentioning that similar approaches could be useful also for the other discrepancy measures, e.g., the rectangle or the isotrope discrepancy of $P$ and $Q$. However, for this to materialize, more research is needed to develop good approximations of such discrepancy values. (This is particularly the case for the isotrope discrepancy---see the comment regarding the isotrope discrepancy at the beginning of Section~\ref{L_INFTY}.) 


\begin{acknowledgement}
The authors would like to thank Sergei Kucherenko, Shu Tezuka, Tony Warnock, Greg Wasilkowski, Peter Winker, and an 
anonymous referee for their valuable comments.

Carola Doerr is supported by a Feodor Lynen postdoctoral research fellowship of the Alexander von Humboldt Foundation and by the Agence Nationale de la Recherche under the project ANR-09-JCJC-0067-01.

The work of Michael Gnewuch was supported by the German Science Foundation DFG
under grant GN-91/3 and the Australian Research Council ARC.

The work of Magnus Wahlstr\"om was supported by the German Science Foundation DFG via its
priority program ``SPP 1307: Algorithm Engineering'' under grant DO 749/4-1.
\end{acknowledgement}

%
%
%

\bibliographystyle{aps-nameyear}
\bibliography{doerr_gnewuch_wahlstroem_references}

\begin{thebibliography}{145}
\ifx \bisbn   \undefined \def \bisbn  #1{ISBN #1}\fi
\ifx \binits  \undefined \def \binits#1{#1} \fi
\ifx \bauthor  \undefined \def \bauthor#1{#1} \fi
\ifx \bjtitle  \undefined \def \bjtitle#1{\textrm{#1}}\fi
\ifx \batitle  \undefined \def \batitle#1{#1} \fi
\ifx \bctitle  \undefined \def \bctitle#1{#1} \fi
\ifx \bvolume  \undefined \def \bvolume#1{\textbf{#1}}\fi
\ifx \byear  \undefined \def \byear#1{#1} \fi
\ifx \bissue  \undefined \def \bissue#1{#1} \fi
\ifx \bfpage  \undefined \def \bfpage#1{#1} \fi
\ifx \blpage  \undefined \def \blpage #1{#1} \fi
\ifx \burl  \undefined \def \burl#1{#1} \fi
\ifx \doiurl  \undefined \def \doiurl#1{#1} \fi
\ifx \betal  \undefined \def \betal{et al.} \fi
\ifx \binstitute  \undefined \def \binstitute#1{#1} \fi
\ifx \beditor  \undefined \def \beditor#1{#1} \fi
\ifx \bpublisher  \undefined \def \bpublisher#1{#1} \fi
\ifx \bbtitle  \undefined \def \bbtitle#1{\textit{#1}} \fi
\ifx \bedition  \undefined \def \bedition#1{#1} \fi
\ifx \bseriesno  \undefined \def \bseriesno#1{#1} \fi
\ifx \blocation  \undefined \def \blocation#1{#1} \fi
\ifx \bsertitle  \undefined \def \bsertitle#1{#1} \fi
\ifx \bsnm \undefined \def \bsnm#1{#1} \fi
\ifx \bsuffix \undefined \def \bsuffix#1{#1} \fi
\ifx \bparticle \undefined \def \bparticle#1{#1} \fi
\ifx \barticle \undefined \def \barticle#1{#1} \fi
\ifx \botherref \undefined \def \botherref #1{#1} \fi
\ifx \url \undefined \def \url#1{#1} \fi
\ifx \bchapter \undefined \def \bchapter#1{#1} \fi
\ifx \bbook \undefined \def \bbook#1{#1} \fi
\ifx \bcomment \undefined \def \bcomment#1{#1} \fi
\ifx \oauthor \undefined \def \oauthor#1{#1} \fi
\ifx \citeauthoryear \undefined \def \citeauthoryear#1{#1} \fi
\ifx \texttildelow  \undefined \def \texttildelow{\symbol{126}} \fi
\def \endbibitem {}
\ifx \bconflocation  \undefined \def \bconflocation#1{#1} \fi

\bibitem[\protect\citeauthoryear{Aho et~al.}{1974}]{AHU74}
\begin{bbook}
\bauthor{\binits{A.V.} \bsnm{Aho}},
\bauthor{\binits{J.E.} \bsnm{Hopcroft}},
\bauthor{\binits{J.D.} \bsnm{Ullman}},
\bbtitle{The {D}esign and {A}nalysis of {C}omputer {A}lgorithms}
(\bpublisher{Addison-Wesley},
\blocation{Reading, Massachusetts}, \byear{1974})
\end{bbook}
\endbibitem

\bibitem[\protect\citeauthoryear{Atanassov}{2004}]{Ata04}
\begin{barticle}
\bauthor{\binits{E.I.} \bsnm{Atanassov}},
\batitle{On the discrepancy of the {H}alton sequences}.
\bjtitle{Math. Balkanica (N. S.)}
\bvolume{18},
\bfpage{15}--\blpage{32}
(\byear{2004})
\end{barticle}
\endbibitem

\bibitem[\protect\citeauthoryear{Beck}{1984}]{Bec84}
\begin{barticle}
\bauthor{\binits{J.} \bsnm{Beck}},
\batitle{Some upper bounds in the theory of irregulatities of distribution}.
\bjtitle{Acta. Arith.}
\bvolume{43},
\bfpage{115}--\blpage{130}
(\byear{1984})
\end{barticle}
\endbibitem

\bibitem[\protect\citeauthoryear{Beck}{1988}]{Bec88}
\begin{botherref}
\oauthor{\binits{J.} \bsnm{Beck}},
On the discrepancy of convex plane sets.
Monatsh. Math.,
91--106
(1988)
\end{botherref}
\endbibitem

\bibitem[\protect\citeauthoryear{Beck and Chen}{1987}]{BC87}
\begin{bbook}
\bauthor{\binits{J.} \bsnm{Beck}},
\bauthor{\binits{W.W.L.} \bsnm{Chen}},
\bbtitle{Irregularities of Distribution}
(\bpublisher{Cambridge University Press},
\blocation{Cambridge}, \byear{1987})
\end{bbook}
\endbibitem

\bibitem[\protect\citeauthoryear{Bentley}{1977}]{Bentley77}
\begin{botherref}
\oauthor{\binits{J.L.} \bsnm{Bentley}},
1977,
Algorithms for {Klee's} rectangle problem.
Unpublished notes, Dept. of Computer Science, Carnegie Mellon Uiversity
\end{botherref}
\endbibitem

\bibitem[\protect\citeauthoryear{Bilyk}{2012}]{Bil12PD}
\begin{bchapter}
\bauthor{\binits{D.} \bsnm{Bilyk}},
\bctitle{Roth's orthogonal function method in discrepancy theory and some new
  connections},
in \bbtitle{A Panorama of Discrepancy Theory},
ed. by \beditor{\binits{W.W.L.} \bsnm{Chen}},
\beditor{\binits{A.} \bsnm{Srivastav}},
\beditor{\binits{G.} \bsnm{Travaglini}}
(\bpublisher{Springer},
\blocation{Berlin}, \byear{2012})
\end{bchapter}
\endbibitem

\bibitem[\protect\citeauthoryear{Bilyk et~al.}{2009}]{BLPV09}
\begin{barticle}
\bauthor{\binits{D.} \bsnm{Bilyk}},
\bauthor{\binits{M.T.} \bsnm{Lacey}},
\bauthor{\binits{I.} \bsnm{Parissis}},
\bauthor{\binits{A.} \bsnm{Vagharshakyan}},
\batitle{Exponential sqared integrability of the discrepancy function in two
  dimensions}.
\bjtitle{Mathematika}
\bvolume{55},
\bfpage{1}--\blpage{27}
(\byear{2009})
\end{barticle}
\endbibitem

\bibitem[\protect\citeauthoryear{Braaten and Weller}{1979}]{BW79}
\begin{barticle}
\bauthor{\binits{E.} \bsnm{Braaten}},
\bauthor{\binits{G.} \bsnm{Weller}},
\batitle{An improved low-discrepancy sequence for multidimensional
  quasi-{M}onte {C}arlo integration}.
\bjtitle{J. Comput. Physics}
\bvolume{33},
\bfpage{249}--\blpage{258}
(\byear{1979})
\end{barticle}
\endbibitem

\bibitem[\protect\citeauthoryear{Brandolini et~al.}{2011}]{BCCGST11}
\begin{botherref}
\oauthor{\binits{L.} \bsnm{Brandolini}},
\oauthor{\binits{C.} \bsnm{Choirat}},
\oauthor{\binits{L.} \bsnm{Colzani}},
\oauthor{\binits{G.} \bsnm{Gigante}},
\oauthor{\binits{R.} \bsnm{Seri}},
\oauthor{\binits{G.} \bsnm{Travaglini}},
\textit{Quadrature rules and distribution of points on manifolds},
2011.
To appear in: Ann. Scuola Norm. Sup. Pisa Cl. Sci.
\end{botherref}
\endbibitem

\bibitem[\protect\citeauthoryear{Bundschuh and Zhu}{1993}]{BZ93}
\begin{barticle}
\bauthor{\binits{P.} \bsnm{Bundschuh}},
\bauthor{\binits{Y.C.} \bsnm{Zhu}},
\batitle{A method for exact calculation of the discrepancy of low-dimensional
  point sets {I}}.
\bjtitle{Abh. Math. Sem. Univ. Hamburg}
\bvolume{63},
\bfpage{115}--\blpage{133}
(\byear{1993})
\end{barticle}
\endbibitem

\bibitem[\protect\citeauthoryear{Chan}{2010}]{Chan10}
\begin{barticle}
\bauthor{\binits{T.M.} \bsnm{Chan}},
\batitle{A (slightly) faster algorithm for {Klee's} measure problem}.
\bjtitle{Comput. Geom.}
\bvolume{43}(\bissue{3}),
\bfpage{243}--\blpage{250}
(\byear{2010})
\end{barticle}
\endbibitem

\bibitem[\protect\citeauthoryear{Chazelle}{2000}]{Cha00}
\begin{bbook}
\bauthor{\binits{B.} \bsnm{Chazelle}},
\bbtitle{The Discrepancy Method}
(\bpublisher{Cambridge University Press},
\blocation{Cambridge}, \byear{2000})
\end{bbook}
\endbibitem

\bibitem[\protect\citeauthoryear{Chen et~al.}{2005}]{ChenCFHJKX05}
\begin{barticle}
\bauthor{\binits{J.} \bsnm{Chen}},
\bauthor{\binits{B.} \bsnm{Chor}},
\bauthor{\binits{M.} \bsnm{Fellows}},
\bauthor{\binits{X.} \bsnm{Huang}},
\bauthor{\binits{D.W.} \bsnm{Juedes}},
\bauthor{\binits{I.A.} \bsnm{Kanj}},
\bauthor{\binits{G.} \bsnm{Xia}},
\batitle{Tight lower bounds for certain parameterized {NP}-hard problems}.
\bjtitle{Inf. Comput.}
\bvolume{201}(\bissue{2}),
\bfpage{216}--\blpage{231}
(\byear{2005})
\end{barticle}
\endbibitem

\bibitem[\protect\citeauthoryear{Chen et~al.}{2006}]{ChenHKX06}
\begin{barticle}
\bauthor{\binits{J.} \bsnm{Chen}},
\bauthor{\binits{X.} \bsnm{Huang}},
\bauthor{\binits{I.A.} \bsnm{Kanj}},
\bauthor{\binits{G.} \bsnm{Xia}},
\batitle{Strong computational lower bounds via parameterized complexity}.
\bjtitle{J. Comput. Syst. Sci.}
\bvolume{72}(\bissue{8}),
\bfpage{1346}--\blpage{1367}
(\byear{2006})
\end{barticle}
\endbibitem

\bibitem[\protect\citeauthoryear{Chen and Skriganov}{2012}]{CS12PD}
\begin{bchapter}
\bauthor{\binits{W.W.L.} \bsnm{Chen}},
\bauthor{\binits{M.M.} \bsnm{Skriganov}},
\bctitle{Upper Bounds in Irregularities of Point Distribution},
in \bbtitle{A Panorama of Discrepancy Theory},
ed. by \beditor{\binits{W.W.L.} \bsnm{Chen}},
\beditor{\binits{A.} \bsnm{Srivastav}},
\beditor{\binits{G.} \bsnm{Travaglini}}
(\bpublisher{Springer},
\blocation{Berlin}, \byear{2012})
\end{bchapter}
\endbibitem

\bibitem[\protect\citeauthoryear{Chen and Travaglini}{2007}]{CT07}
\begin{barticle}
\bauthor{\binits{W.W.L.} \bsnm{Chen}},
\bauthor{\binits{G.} \bsnm{Travaglini}},
\batitle{Discrepancy with respect to convex polygons}.
\bjtitle{J. Complexity}
\bvolume{23},
\bfpage{662}--\blpage{672}
(\byear{2007})
\end{barticle}
\endbibitem

\bibitem[\protect\citeauthoryear{Chi et~al.}{2005}]{CMW05}
\begin{barticle}
\bauthor{\binits{H.} \bsnm{Chi}},
\bauthor{\binits{M.} \bsnm{Mascagni}},
\bauthor{\binits{T.} \bsnm{Warnock}},
\batitle{On the optimal {H}alton sequence}.
\bjtitle{Math. Comput. Simul.}
\bvolume{70},
\bfpage{9}--\blpage{21}
(\byear{2005})
\end{barticle}
\endbibitem

\bibitem[\protect\citeauthoryear{Chuang and Hung}{2010}]{CH10}
\begin{barticle}
\bauthor{\binits{S.C.} \bsnm{Chuang}},
\bauthor{\binits{Y.C.} \bsnm{Hung}},
\batitle{Uniform design over general inut domains with applications to target
  region estimation in computer experiments}.
\bjtitle{Comput. Statist. Data Anal.}
\bvolume{54},
\bfpage{219}--\blpage{232}
(\byear{2010}).
doi:\doiurl{10.1016/j.csda.2010.01.032}.
\burl{http://dx.doi.org/10.1016/j.csda.2010.01.032}
\end{barticle}
\endbibitem

\bibitem[\protect\citeauthoryear{Cormen et~al.}{2009}]{CLRS09}
\begin{bbook}
\bauthor{\binits{T.H.} \bsnm{Cormen}},
\bauthor{\binits{C.E.} \bsnm{Leiserson}},
\bauthor{\binits{R.L.} \bsnm{Rivest}},
\bauthor{\binits{C.} \bsnm{Stein}},
\bbtitle{Introduction to Algorithms (3. ed.)}
(\bpublisher{MIT Press},
\blocation{Cambridge, Massachusetts}, \byear{2009}),
pp. --\blpage{11292}.
\bisbn{978-0-262-03384-8}
\end{bbook}
\endbibitem

\bibitem[\protect\citeauthoryear{de~Berg et~al.}{2008}]{BCKO08}
\begin{bbook}
\bauthor{\binits{M.} \bparticle{de} \bsnm{Berg}},
\bauthor{\binits{O.} \bsnm{Cheong}},
\bauthor{\binits{M.} \bparticle{van} \bsnm{Kreveld}},
\bauthor{\binits{M.H.} \bsnm{Overmars}},
\bbtitle{Computational Geometry. Algorithms and Applications}
(\bpublisher{Springer},
\blocation{Berlin}, \byear{2008})
\end{bbook}
\endbibitem

\bibitem[\protect\citeauthoryear{de~Clerck}{1986}]{DeC86}
\begin{barticle}
\bauthor{\binits{L.} \bparticle{de} \bsnm{Clerck}},
\batitle{A method for exact calculation of the star-discrepancy of plane sets
  applied to the sequence of {H}ammersley}.
\bjtitle{Monatsh. Math.}
\bvolume{101},
\bfpage{261}--\blpage{278}
(\byear{1986})
\end{barticle}
\endbibitem

\bibitem[\protect\citeauthoryear{De~Rainville et~al.}{2012}]{DGTL12}
\begin{botherref}
\oauthor{\binits{F.-M.} \bsnm{De~Rainville}},
\oauthor{\binits{C.} \bsnm{Gagn\'e}},
\oauthor{\binits{O.} \bsnm{Teytaud}},
\oauthor{\binits{D.} \bsnm{Laurendeau}},
Evolutionary optimization of low-discrepancy sequences.
ACM Trans. Model. Comput. Simul.
\textbf{22}
(2012).
Article 9
\end{botherref}
\endbibitem

\bibitem[\protect\citeauthoryear{Dick and Pillichshammer}{2010}]{DP10}
\begin{bbook}
\bauthor{\binits{J.} \bsnm{Dick}},
\bauthor{\binits{F.} \bsnm{Pillichshammer}},
\bbtitle{Digital Nets and Sequences}
(\bpublisher{Cambridge University Press},
\blocation{Cambridge}, \byear{2010})
\end{bbook}
\endbibitem

\bibitem[\protect\citeauthoryear{Dick and Pillichshammer}{2012}]{DP12PD}
\begin{bchapter}
\bauthor{\binits{J.} \bsnm{Dick}},
\bauthor{\binits{F.} \bsnm{Pillichshammer}},
\bctitle{Discrepancy theory and quasi-{M}onte {C}arlo integration},
in \bbtitle{A Panorama of Discrepancy Theory},
ed. by \beditor{\binits{W.W.L.} \bsnm{Chen}},
\beditor{\binits{A.} \bsnm{Srivastav}},
\beditor{\binits{G.} \bsnm{Travaglini}}
(\bpublisher{Springer},
\blocation{Berlin}, \byear{2012})
\end{bchapter}
\endbibitem

\bibitem[\protect\citeauthoryear{Dick et~al.}{2004}]{DSWW04}
\begin{barticle}
\bauthor{\binits{J.} \bsnm{Dick}},
\bauthor{\binits{I.H.} \bsnm{Sloan}},
\bauthor{\binits{X.} \bsnm{Wang}},
\bauthor{\binits{H.} \bsnm{Wo{\'z}niakowski}},
\batitle{Liberating the weights}.
\bjtitle{J. Complexity}
\bvolume{20}(\bissue{5}),
\bfpage{593}--\blpage{623}
(\byear{2004}).
doi:\doiurl{10.1016/j.jco.2003.06.002}.
\burl{http://dx.doi.org/10.1016/j.jco.2003.06.002}
\end{barticle}
\endbibitem

\bibitem[\protect\citeauthoryear{Dobkin et~al.}{1996}]{DEM96}
\begin{barticle}
\bauthor{\binits{D.P.} \bsnm{Dobkin}},
\bauthor{\binits{D.} \bsnm{Eppstein}},
\bauthor{\binits{D.P.} \bsnm{Mitchell}},
\batitle{Computing the discrepancy with applications to supersampling
  patterns}.
\bjtitle{ACM Trans. Graph.}
\bvolume{15},
\bfpage{354}--\blpage{376}
(\byear{1996})
\end{barticle}
\endbibitem

\bibitem[\protect\citeauthoryear{Doerr}{2006}]{Doe06}
\begin{bchapter}
\bauthor{\binits{B.} \bsnm{Doerr}},
\bctitle{Generating randomized roundings with cardinality constraints and
  derandomizations},
in \bbtitle{Proceedings of the $23$rd Annual Symposium on Theoretical Aspects
  of Computer Science (STACS'06)},
ed. by \beditor{\binits{B.} \bsnm{Durand}},
\beditor{\binits{W.} \bsnm{Thomas}}
\bsertitle{LNiCS, vol. 3884}
(\bpublisher{Springer},
\blocation{Berlin Heidelberg}, \byear{2006}),
pp. \bfpage{571}--\blpage{583}
\end{bchapter}
\endbibitem

\bibitem[\protect\citeauthoryear{Doerr and Gnewuch}{2008}]{DG08}
\begin{bchapter}
\bauthor{\binits{B.} \bsnm{Doerr}},
\bauthor{\binits{M.} \bsnm{Gnewuch}},
\bctitle{Construction of low-discrepancy point sets of small size by bracketing
  covers and dependent randomized rounding},
in \bbtitle{Monte Carlo and Quasi-Monte Carlo Methods 2006},
ed. by \beditor{\binits{A.} \bsnm{Keller}},
\beditor{\binits{S.} \bsnm{Heinrich}},
\beditor{\binits{H.} \bsnm{Niederreiter}}
(\bpublisher{Springer},
\blocation{Berlin Heidelberg}, \byear{2008}),
pp. \bfpage{299}--\blpage{312}
\end{bchapter}
\endbibitem

\bibitem[\protect\citeauthoryear{Doerr and Wahlstr{\"o}m}{2009}]{DW09}
\begin{bchapter}
\bauthor{\binits{B.} \bsnm{Doerr}},
\bauthor{\binits{M.} \bsnm{Wahlstr{\"o}m}},
\bctitle{Randomized rounding in the presence of a cardinality constraint},
in \bbtitle{Proceedings of the Workshop on Algorithm Engineering and
  Experiments (ALENEX 2009)},
ed. by \beditor{\binits{I.} \bsnm{Finocchi}},
\beditor{\binits{J.} \bsnm{Hershberger}}
(\bpublisher{SIAM},
\blocation{Philadelphia}, \byear{2009}).
\bcomment{162-174}
\end{bchapter}
\endbibitem

\bibitem[\protect\citeauthoryear{Doerr et~al.}{2005}]{DGS05}
\begin{barticle}
\bauthor{\binits{B.} \bsnm{Doerr}},
\bauthor{\binits{M.} \bsnm{Gnewuch}},
\bauthor{\binits{A.} \bsnm{Srivastav}},
\batitle{Bounds and constructions for the star discrepancy via
  $\delta$-covers}.
\bjtitle{J. Complexity}
\bvolume{21},
\bfpage{691}--\blpage{709}
(\byear{2005})
\end{barticle}
\endbibitem

\bibitem[\protect\citeauthoryear{Doerr et~al.}{2009}]{DGW09}
\begin{bchapter}
\bauthor{\binits{B.} \bsnm{Doerr}},
\bauthor{\binits{M.} \bsnm{Gnewuch}},
\bauthor{\binits{M.} \bsnm{Wahlstr{\"o}m}},
\bctitle{Implementation of a component-by-component algorithm to generate
  low-discrepancy samples},
in \bbtitle{Monte Carlo and Quasi-Monte Carlo Methods 2008},
ed. by \beditor{\binits{P.} \bsnm{L'Ecuyer}},
\beditor{\binits{A.B.} \bsnm{Owen}}
(\bpublisher{Springer},
\blocation{Berlin Heidelberg}, \byear{2009})
\end{bchapter}
\endbibitem

\bibitem[\protect\citeauthoryear{Doerr et~al.}{2010}]{DGW10}
\begin{barticle}
\bauthor{\binits{B.} \bsnm{Doerr}},
\bauthor{\binits{M.} \bsnm{Gnewuch}},
\bauthor{\binits{M.} \bsnm{Wahlstr{\"o}m}},
\batitle{Algorithmic construction of low-discrepancy point sets via dependent
  randomized rounding}.
\bjtitle{J.~Complexity}
\bvolume{26},
\bfpage{490}--\blpage{507}
(\byear{2010})
\end{barticle}
\endbibitem

\bibitem[\protect\citeauthoryear{Doerr et~al.}{2008}]{DGKP08}
\begin{barticle}
\bauthor{\binits{B.} \bsnm{Doerr}},
\bauthor{\binits{M.} \bsnm{Gnewuch}},
\bauthor{\binits{P.} \bsnm{Kritzer}},
\bauthor{\binits{F.} \bsnm{Pillichshammer}},
\batitle{Component-by-component construction of low-discrepancy point sets of
  small size}.
\bjtitle{Monte Carlo Methods Appl.}
\bvolume{14},
\bfpage{129}--\blpage{149}
(\byear{2008})
\end{barticle}
\endbibitem

\bibitem[\protect\citeauthoryear{Doerr~(nee Winzen) and
  De~Rainville}{2013}]{DW12}
\begin{bchapter}
\bauthor{\binits{C.} \bparticle{Doerr~(nee} \bsnm{Winzen)}},
\bauthor{\binits{F.-M.} \bsnm{De~Rainville}},
\bctitle{Constructing Low Star Discrepancy Point Sets with Genetic Algorithms},
in \bbtitle{Proc. of Genetic and Evolutionary Computation Conference
  (GECCO'13)}
(\bpublisher{ACM},
\blocation{New York}, \byear{2013}),
pp. \bfpage{789}--\blpage{796}.
doi:\doiurl{10.1145/2463372.2463469}
\end{bchapter}
\endbibitem

\bibitem[\protect\citeauthoryear{Downey and Fellows}{1999}]{DF99}
\begin{bbook}
\bauthor{\binits{R.G.} \bsnm{Downey}},
\bauthor{\binits{M.R.} \bsnm{Fellows}},
\bbtitle{Parameterized Complexity}
(\bpublisher{Springer},
\blocation{Berlin}, \byear{1999})
\end{bbook}
\endbibitem

\bibitem[\protect\citeauthoryear{Drmota and Tichy}{1997}]{DT97}
\begin{bbook}
\bauthor{\binits{M.} \bsnm{Drmota}},
\bauthor{\binits{R.F.} \bsnm{Tichy}},
\bbtitle{Sequences, Discrepancies and Applications}.
\bsertitle{Lecture Notes in Mathematics},
vol. \bseriesno{1651}
(\bpublisher{Springer},
\blocation{Berlin and Heidelberg}, \byear{1997})
\end{bbook}
\endbibitem

\bibitem[\protect\citeauthoryear{Fang et~al.}{2000}]{FLWZ00}
\begin{barticle}
\bauthor{\binits{K.-T.} \bsnm{Fang}},
\bauthor{\binits{D.K.J.} \bsnm{Lin}},
\bauthor{\binits{P.} \bsnm{Winker}},
\bauthor{\binits{Y.} \bsnm{Zhang}},
\batitle{Uniform design: theory and application}.
\bjtitle{Technometrics}
\bvolume{42},
\bfpage{237}--\blpage{248}
(\byear{2000})
\end{barticle}
\endbibitem

\bibitem[\protect\citeauthoryear{Faure}{1981}]{Fau81}
\begin{barticle}
\bauthor{\binits{H.} \bsnm{Faure}},
\batitle{Discr\'epance de suites associ\'ees \`a un syst\'eme de num\'eration
  (en dimension un)}.
\bjtitle{Bull. Soc. Math. France}
\bvolume{109},
\bfpage{143}--\blpage{182}
(\byear{1981})
\end{barticle}
\endbibitem

\bibitem[\protect\citeauthoryear{Faure}{1982}]{Fau82}
\begin{barticle}
\bauthor{\binits{H.} \bsnm{Faure}},
\batitle{Discr\'epance de suites associ\'ees \`a un syst\'eme de num\'eration
  (en dimension $s$)}.
\bjtitle{Acta Arith.}
\bvolume{41},
\bfpage{338}--\blpage{351}
(\byear{1982})
\end{barticle}
\endbibitem

\bibitem[\protect\citeauthoryear{Faure and Lemieux}{2009}]{FL09}
\begin{barticle}
\bauthor{\binits{H.} \bsnm{Faure}},
\bauthor{\binits{C.} \bsnm{Lemieux}},
\batitle{Generalized {H}alton sequences in 2008: {A} comparative study}.
\bjtitle{ACM Trans. Model. Comput. Simul.}
\bvolume{19},
\bfpage{15}--\blpage{131}
(\byear{2009})
\end{barticle}
\endbibitem

\bibitem[\protect\citeauthoryear{Flum and Grohe}{2006}]{FlumGroheBook}
\begin{bbook}
\bauthor{\binits{J.} \bsnm{Flum}},
\bauthor{\binits{M.} \bsnm{Grohe}},
\bbtitle{Parameterized Complexity Theory}
(\bpublisher{Springer},
\blocation{New York}, \byear{2006})
\end{bbook}
\endbibitem

\bibitem[\protect\citeauthoryear{Frank and Heinrich}{1996}]{FH96}
\begin{barticle}
\bauthor{\binits{K.} \bsnm{Frank}},
\bauthor{\binits{S.} \bsnm{Heinrich}},
\batitle{Computing discrepancies of {S}molyak quadrature rules}.
\bjtitle{J. Complexity}
\bvolume{12},
\bfpage{287}--\blpage{314}
(\byear{1996})
\end{barticle}
\endbibitem

\bibitem[\protect\citeauthoryear{Fredman and Weide}{1978}]{FW78}
\begin{barticle}
\bauthor{\binits{M.L.} \bsnm{Fredman}},
\bauthor{\binits{B.W.} \bsnm{Weide}},
\batitle{On the complexity of computing the measure of {$\bigcup [a_i, b_i]$}}.
\bjtitle{Commun. ACM}
\bvolume{21}(\bissue{7}),
\bfpage{540}--\blpage{544}
(\byear{1978})
\end{barticle}
\endbibitem

\bibitem[\protect\citeauthoryear{Gabai}{1963}]{Gab63}
\begin{barticle}
\bauthor{\binits{H.} \bsnm{Gabai}},
\batitle{On the discrepancy of certain sequences mod 1}.
\bjtitle{Indaq. Math.}
\bvolume{25},
\bfpage{603}--\blpage{605}
(\byear{1963})
\end{barticle}
\endbibitem

\bibitem[\protect\citeauthoryear{Garey and Johnson}{1979}]{GJ79}
\begin{bbook}
\bauthor{\binits{M.R.} \bsnm{Garey}},
\bauthor{\binits{D.S.} \bsnm{Johnson}},
\bbtitle{Computers and Intractability: A Guide to the Theory of
  NP-Completeness}
(\bpublisher{W. H. Freeman and Co.},
\blocation{San Francisco}, \byear{1979})
\end{bbook}
\endbibitem

\bibitem[\protect\citeauthoryear{Gerstner and Griebel}{1998}]{GG98}
\begin{botherref}
\oauthor{\binits{T.} \bsnm{Gerstner}},
\oauthor{\binits{M.} \bsnm{Griebel}},
Numerical integration using sparse grids.
Numer. Algorithms,
209--232
(1998)
\end{botherref}
\endbibitem

\bibitem[\protect\citeauthoryear{Giannopoulus et~al.}{2012}]{GKWW12}
\begin{barticle}
\bauthor{\binits{P.} \bsnm{Giannopoulus}},
\bauthor{\binits{C.} \bsnm{Knauer}},
\bauthor{\binits{M.} \bsnm{Wahlstr{\"o}m}},
\bauthor{\binits{D.} \bsnm{Werner}},
\batitle{Hardness of discrepancy computation and epsilon-net verification in
  high dimensions}.
\bjtitle{J. Complexity}
\bvolume{28},
\bfpage{162}--\blpage{176}
(\byear{2012})
\end{barticle}
\endbibitem

\bibitem[\protect\citeauthoryear{Gnewuch}{2005}]{Gne05}
\begin{botherref}
\oauthor{\binits{M.} \bsnm{Gnewuch}},
Bounds for the average {$L^p$}-extreme and the {$L^\infty$}-extreme
  discrepancy.
Electron. J. Combin.,
1--11
(2005).
Research Paper 54
\end{botherref}
\endbibitem

\bibitem[\protect\citeauthoryear{Gnewuch}{2008a}]{Gne08a}
\begin{barticle}
\bauthor{\binits{M.} \bsnm{Gnewuch}},
\batitle{Bracketing numbers for axis-parallel boxes and applications to
  geometric discrepancy}.
\bjtitle{J. Complexity}
\bvolume{24},
\bfpage{154}--\blpage{172}
(\byear{2008}a)
\end{barticle}
\endbibitem

\bibitem[\protect\citeauthoryear{Gnewuch}{2008b}]{Gne08b}
\begin{botherref}
\oauthor{\binits{M.} \bsnm{Gnewuch}},
Construction of minimal bracketing covers for rectangles.
Electron. J.~Combin.
\textbf{15}
(2008b).
{R}esearch {P}aper 95
\end{botherref}
\endbibitem

\bibitem[\protect\citeauthoryear{Gnewuch}{2012a}]{Gne12b}
\begin{bchapter}
\bauthor{\binits{M.} \bsnm{Gnewuch}},
\bctitle{Entropy, Randomization, Derandomization, and Discrepancy},
in \bbtitle{Monte Carlo and Quasi-Monte Carlo Methods 2010},
ed. by \beditor{\binits{L.} \bsnm{Plaskota}},
\beditor{\binits{H.} \bsnm{Wo\'zniakowski}}
(\bpublisher{Springer},
\blocation{Berlin Heidelberg}, \byear{2012}a),
pp. \bfpage{43}--\blpage{78}
\end{bchapter}
\endbibitem

\bibitem[\protect\citeauthoryear{Gnewuch}{2012b}]{Gne12}
\begin{barticle}
\bauthor{\binits{M.} \bsnm{Gnewuch}},
\batitle{Weighted geometric discrepancies and numerical integration on
  reproducing kernel {H}ilbert spaces}.
\bjtitle{J. Complexity}
\bvolume{28},
\bfpage{2}--\blpage{17}
(\byear{2012}b)
\end{barticle}
\endbibitem

\bibitem[\protect\citeauthoryear{Gnewuch and Ro\c{s}ca}{2009}]{GR09}
\begin{barticle}
\bauthor{\binits{M.} \bsnm{Gnewuch}},
\bauthor{\binits{A.} \bsnm{Ro\c{s}ca}},
\batitle{On {$G$}-discrepancy and mixed {M}onte {C}arlo and quasi-{M}onte
  {C}arlo sequences}.
\bjtitle{Acta Univ. Apul. Math. Inform.}
\bvolume{18},
\bfpage{97}--\blpage{110}
(\byear{2009})
\end{barticle}
\endbibitem

\bibitem[\protect\citeauthoryear{Gnewuch et~al.}{2009}]{GSW09}
\begin{barticle}
\bauthor{\binits{M.} \bsnm{Gnewuch}},
\bauthor{\binits{A.} \bsnm{Srivastav}},
\bauthor{\binits{C.} \bsnm{Winzen}},
\batitle{Finding optimal volume subintervals with $k$ points and calculating
  the star discrepancy are {NP}-hard problems}.
\bjtitle{J. Complexity}
\bvolume{25},
\bfpage{115}--\blpage{127}
(\byear{2009})
\end{barticle}
\endbibitem

\bibitem[\protect\citeauthoryear{Gnewuch et~al.}{2012}]{GWW12}
\begin{barticle}
\bauthor{\binits{M.} \bsnm{Gnewuch}},
\bauthor{\binits{M.} \bsnm{Wahlstr{\"o}m}},
\bauthor{\binits{C.} \bsnm{Winzen}},
\batitle{A new randomized algorithm to approximate the star discrepancy based
  on threshold accepting}.
\bjtitle{SIAM J. Numer. Anal.}
\bvolume{50},
\bfpage{781}--\blpage{807}
(\byear{2012})
\end{barticle}
\endbibitem

\bibitem[\protect\citeauthoryear{Halton}{1960}]{Hal60}
\begin{barticle}
\bauthor{\binits{J.H.} \bsnm{Halton}},
\batitle{On the efficiency of certain quasi-random sequences of points in
  evaluating multidimensional integrals}.
\bjtitle{Numer. Math.}
\bvolume{2},
\bfpage{84}--\blpage{90}
(\byear{1960})
\end{barticle}
\endbibitem

\bibitem[\protect\citeauthoryear{Halton and Zaremba}{1969}]{HZ69}
\begin{barticle}
\bauthor{\binits{J.H.} \bsnm{Halton}},
\bauthor{\binits{S.K.} \bsnm{Zaremba}},
\batitle{The extreme and {$L^2$}-discrepancies of some plane sets}.
\bjtitle{Monatshefte Math.}
\bvolume{73},
\bfpage{316}--\blpage{328}
(\byear{1969})
\end{barticle}
\endbibitem

\bibitem[\protect\citeauthoryear{Hammersley}{1960}]{Ham60}
\begin{barticle}
\bauthor{\binits{J.M.} \bsnm{Hammersley}},
\batitle{{M}onte {C}arlo methods for solving multivariate problems}.
\bjtitle{Ann. New York Acad. Sci.}
\bvolume{86},
\bfpage{844}--\blpage{874}
(\byear{1960})
\end{barticle}
\endbibitem

\bibitem[\protect\citeauthoryear{Heinrich}{1996}]{Hei96}
\begin{barticle}
\bauthor{\binits{S.} \bsnm{Heinrich}},
\batitle{Efficient algorithms for computing the {$L_2$} discrepancy}.
\bjtitle{Math. Comp.}
\bvolume{65},
\bfpage{1621}--\blpage{1633}
(\byear{1996})
\end{barticle}
\endbibitem

\bibitem[\protect\citeauthoryear{Heinrich et~al.}{2001}]{HNWW01}
\begin{barticle}
\bauthor{\binits{S.} \bsnm{Heinrich}},
\bauthor{\binits{E.} \bsnm{Novak}},
\bauthor{\binits{G.W.} \bsnm{Wasilkowski}},
\bauthor{\binits{H.} \bsnm{Wo\'{z}niakowski}},
\batitle{The inverse of the star-discrepancy depends linearly on the
  dimension}.
\bjtitle{Acta Arith.}
\bvolume{96},
\bfpage{279}--\blpage{302}
(\byear{2001})
\end{barticle}
\endbibitem

\bibitem[\protect\citeauthoryear{Hellekalek and Leeb}{1997}]{HL97}
\begin{barticle}
\bauthor{\binits{P.} \bsnm{Hellekalek}},
\bauthor{\binits{H.} \bsnm{Leeb}},
\batitle{Dyadic diaphony}.
\bjtitle{Acta Arith.}
\bvolume{80},
\bfpage{187}--\blpage{196}
(\byear{1997})
\end{barticle}
\endbibitem

\bibitem[\protect\citeauthoryear{Henrion et~al.}{2008}]{HKR08}
\begin{barticle}
\bauthor{\binits{R.} \bsnm{Henrion}},
\bauthor{\binits{C.} \bsnm{K\"uchler}},
\bauthor{\binits{W.} \bsnm{R\"omisch}},
\batitle{Discrepancy distances and scenario reduction in two-stage stochastic
  mixed-integer programming}.
\bjtitle{J.~Ind. Manag. Optim.}
\bvolume{4},
\bfpage{363}--\blpage{384}
(\byear{2008})
\end{barticle}
\endbibitem

\bibitem[\protect\citeauthoryear{Henrion et~al.}{2009}]{HKR09}
\begin{barticle}
\bauthor{\binits{R.} \bsnm{Henrion}},
\bauthor{\binits{C.} \bsnm{K\"uchler}},
\bauthor{\binits{W.} \bsnm{R\"omisch}},
\batitle{Scenario reduction in stochastic programming with respect to
  discrepancy distances}.
\bjtitle{Comput. Optim. Appl.}
\bvolume{43},
\bfpage{67}--\blpage{93}
(\byear{2009})
\end{barticle}
\endbibitem

\bibitem[\protect\citeauthoryear{Hickernell}{1998}]{Hic98}
\begin{barticle}
\bauthor{\binits{F.J.} \bsnm{Hickernell}},
\batitle{A generalized discrepancy and quadrature error bound}.
\bjtitle{Math. Comp.}
\bvolume{67},
\bfpage{299}--\blpage{322}
(\byear{1998})
\end{barticle}
\endbibitem

\bibitem[\protect\citeauthoryear{Hickernell and Wang}{2001}]{HW01}
\begin{barticle}
\bauthor{\binits{F.J.} \bsnm{Hickernell}},
\bauthor{\binits{X.} \bsnm{Wang}},
\batitle{The error bounds and tractability of quasi-{M}onte {C}arlo algorithms
  in infinite dimensions}.
\bjtitle{Math. Comp.}
\bvolume{71},
\bfpage{1641}--\blpage{1661}
(\byear{2001})
\end{barticle}
\endbibitem

\bibitem[\protect\citeauthoryear{Hickernell et~al.}{2004}]{HSW04}
\begin{barticle}
\bauthor{\binits{F.J.} \bsnm{Hickernell}},
\bauthor{\binits{I.H.} \bsnm{Sloan}},
\bauthor{\binits{G.W.} \bsnm{Wasilkowski}},
\batitle{On tractability of weighted integration over bounded and unbounded
  regions in $\mathbb{R}^s$}.
\bjtitle{Math. Comp.}
\bvolume{73},
\bfpage{1885}--\blpage{1901}
(\byear{2004})
\end{barticle}
\endbibitem

\bibitem[\protect\citeauthoryear{Hinrichs}{2010}]{Hin10}
\begin{barticle}
\bauthor{\binits{A.} \bsnm{Hinrichs}},
\batitle{Discrepancy of {H}ammersley points in {B}esov spaces of dominated
  mixed smoothness}.
\bjtitle{Math. Nachr.}
\bvolume{283},
\bfpage{477}--\blpage{483}
(\byear{2010})
\end{barticle}
\endbibitem

\bibitem[\protect\citeauthoryear{Hinrichs}{2013}]{Hin12}
\begin{bchapter}
\bauthor{\binits{A.} \bsnm{Hinrichs}},
\bctitle{Discrepancy, Integration and Tractability},
in \bbtitle{Monte Carlo and Quasi-Monte Carlo Methods 2012},
ed. by \beditor{\binits{J.} \bsnm{Dick}},
\beditor{\binits{F.Y.} \bsnm{Kuo}},
\beditor{\binits{G.} \bsnm{Peters}},
\beditor{\binits{I.H.} \bsnm{Sloan}}
(\bpublisher{Springer},
\blocation{Berlin Heidelberg}, \byear{2013}).
\bcomment{To appear}
\end{bchapter}
\endbibitem

\bibitem[\protect\citeauthoryear{Hinrichs and Weyhausen}{2012}]{HW12}
\begin{barticle}
\bauthor{\binits{A.} \bsnm{Hinrichs}},
\bauthor{\binits{H.} \bsnm{Weyhausen}},
\batitle{Asymptotic behavior of average {$L_p$}-discrepancies}.
\bjtitle{J. Complexity}
\bvolume{28},
\bfpage{425}--\blpage{439}
(\byear{2012})
\end{barticle}
\endbibitem

\bibitem[\protect\citeauthoryear{Hlawka}{1964}]{Hla64}
\begin{barticle}
\bauthor{\binits{E.} \bsnm{Hlawka}},
\batitle{Discrepancy and uniform distribution of sequences}.
\bjtitle{Compositio Math.}
\bvolume{16},
\bfpage{83}--\blpage{91}
(\byear{1964})
\end{barticle}
\endbibitem

\bibitem[\protect\citeauthoryear{Holland}{1975}]{Hol75}
\begin{bbook}
\bauthor{\binits{J.H.} \bsnm{Holland}},
\bbtitle{Adaptation In Natural And Artificial Systems}
(\bpublisher{University of Michigan Press},
\blocation{Ann Arbor}, \byear{1975})
\end{bbook}
\endbibitem

\bibitem[\protect\citeauthoryear{Hromkovi\'{c}}{2003}]{Hro03}
\begin{bbook}
\bauthor{\binits{J.} \bsnm{Hromkovi\'{c}}},
\bbtitle{Algorithms for Hard Problems}
(\bpublisher{Springer},
\blocation{Berlin Heidelberg}, \byear{2003}).
\bcomment{$2$nd edition}
\end{bbook}
\endbibitem

\bibitem[\protect\citeauthoryear{Impagliazzo and Paturi}{1999}]{IP99}
\begin{bchapter}
\bauthor{\binits{R.} \bsnm{Impagliazzo}},
\bauthor{\binits{R.} \bsnm{Paturi}},
\bctitle{The complexity of $k$-{SAT}},
in \bbtitle{Proc. 14th IEEE Conf. on Computational Complexity},
\byear{1999},
pp. \bfpage{237}--\blpage{240}
\end{bchapter}
\endbibitem

\bibitem[\protect\citeauthoryear{Impagliazzo et~al.}{2001}]{IPZ01}
\begin{barticle}
\bauthor{\binits{R.} \bsnm{Impagliazzo}},
\bauthor{\binits{R.} \bsnm{Paturi}},
\bauthor{\binits{F.} \bsnm{Zane}},
\batitle{Which problems have strongly exponential complexity?}
\bjtitle{Journal of Computer and System Sciences}
\bvolume{63}(\bissue{4}),
\bfpage{512}--\blpage{530}
(\byear{2001}).
doi:\doiurl{10.1006/jcss.2001.1774}
\end{barticle}
\endbibitem

\bibitem[\protect\citeauthoryear{Joe}{2012}]{Joe12}
\begin{bchapter}
\bauthor{\binits{S.} \bsnm{Joe}},
\bctitle{An intermediate bound on the star discrepancy},
in \bbtitle{Monte Carlo and Quasi-Monte Carlo Methods 2010},
ed. by \beditor{\binits{L.} \bsnm{Plaskota}},
\beditor{\binits{H.} \bsnm{Wo\'zniakowski}}
(\bpublisher{Springer},
\blocation{Berlin Heidelberg}, \byear{2012}),
pp. \bfpage{251}--\blpage{269}
\end{bchapter}
\endbibitem

\bibitem[\protect\citeauthoryear{Joe and Kuo}{2008}]{JK08}
\begin{barticle}
\bauthor{\binits{S.} \bsnm{Joe}},
\bauthor{\binits{F.Y.} \bsnm{Kuo}},
\batitle{Constructing {S}obol' sequences with better two-dimensional
  projections}.
\bjtitle{SIAM J. Sci. Comput.}
\bvolume{30},
\bfpage{2635}--\blpage{2654}
(\byear{2008})
\end{barticle}
\endbibitem

\bibitem[\protect\citeauthoryear{Joe and Sloan}{1992}]{JS92}
\begin{barticle}
\bauthor{\binits{S.} \bsnm{Joe}},
\bauthor{\binits{I.H.} \bsnm{Sloan}},
\batitle{On computing the lattice rule criterion {$R$}}.
\bjtitle{Math. Comp.}
\bvolume{59},
\bfpage{557}--\blpage{568}
(\byear{1992})
\end{barticle}
\endbibitem

\bibitem[\protect\citeauthoryear{Kahan}{1965}]{Kah65}
\begin{barticle}
\bauthor{\binits{W.} \bsnm{Kahan}},
\batitle{Further remarks on reducing truncation errors}.
\bjtitle{Communications of the ACM}
\bvolume{8},
\bfpage{40}
(\byear{1965})
\end{barticle}
\endbibitem

\bibitem[\protect\citeauthoryear{Kirckpatrick et~al.}{1983}]{KGV83}
\begin{barticle}
\bauthor{\binits{S.} \bsnm{Kirckpatrick}},
\bauthor{\binits{C.} \bsnm{Gelatt}},
\bauthor{\binits{M.} \bsnm{Vecchi}},
\batitle{Optimization by simulated annealing}.
\bjtitle{Science}
\bvolume{20},
\bfpage{671}--\blpage{680}
(\byear{1983})
\end{barticle}
\endbibitem

\bibitem[\protect\citeauthoryear{Klee}{1977}]{Klee77}
\begin{barticle}
\bauthor{\binits{V.} \bsnm{Klee}},
\batitle{Can the measure of {$\bigcup_1^n [a_i, b_i]$} be computed in less than
  {$O(n log n)$} steps?}
\bjtitle{The American Mathematical Monthly}
\bvolume{84}(\bissue{4}),
\bfpage{284}--\blpage{285}
(\byear{1977}).
\burl{http://www.jstor.org/stable/2318871}
\end{barticle}
\endbibitem

\bibitem[\protect\citeauthoryear{Knuth}{1997}]{Knu81}
\begin{bbook}
\bauthor{\binits{D.E.} \bsnm{Knuth}},
\bbtitle{The Art of Computer Programming. Vol. 2. Seminumerical Algorithms},
\bedition{3rd} edn.
\bsertitle{Addison-Wesley Series in Computer Science and Information
  Processing}
(\bpublisher{Addison-Wesley},
\blocation{Reading, Mass.}, \byear{1997}),
p. \bfpage{688}
\end{bbook}
\endbibitem

\bibitem[\protect\citeauthoryear{Kuipers and Niederreiter}{1974}]{KN74}
\begin{bbook}
\bauthor{\binits{L.} \bsnm{Kuipers}},
\bauthor{\binits{H.} \bsnm{Niederreiter}},
\bbtitle{Uniform distribution of sequences}
(\bpublisher{Wiley-Interscience [John Wiley \& Sons]},
\blocation{New York}, \byear{1974}),
p. \bfpage{390}.
\bcomment{Pure and Applied Mathematics}
\end{bbook}
\endbibitem

\bibitem[\protect\citeauthoryear{Kuo}{2003}]{Kuo03}
\begin{barticle}
\bauthor{\binits{F.Y.} \bsnm{Kuo}},
\batitle{Component-by-component constructions achieve the optimal rate of
  convergence for multivariate integration in weighted {K}orobov and {S}obolev
  spaces}.
\bjtitle{J. Complexity}
\bvolume{19},
\bfpage{301}--\blpage{320}
(\byear{2003})
\end{barticle}
\endbibitem

\bibitem[\protect\citeauthoryear{Larcher and Pillichshammer}{2007}]{LP07}
\begin{barticle}
\bauthor{\binits{G.} \bsnm{Larcher}},
\bauthor{\binits{F.} \bsnm{Pillichshammer}},
\batitle{A note on optimal point distributions in {$[0,1)^s$}}.
\bjtitle{J. Comput. Appl. Math.}
\bvolume{206}(\bissue{2}),
\bfpage{977}--\blpage{985}
(\byear{2007}).
doi:\doiurl{10.1016/j.cam.2006.09.004}.
\burl{http://dx.doi.org/10.1016/j.cam.2006.09.004}
\end{barticle}
\endbibitem

\bibitem[\protect\citeauthoryear{L'Ecuyer}{1999}]{LEc99}
\begin{barticle}
\bauthor{\binits{P.} \bsnm{L'Ecuyer}},
\batitle{Good parameter sets for combined multiple recursive random number
  generators}.
\bjtitle{Oper. Res.}
\bvolume{47},
\bfpage{159}--\blpage{164}
(\byear{1999})
\end{barticle}
\endbibitem

\bibitem[\protect\citeauthoryear{L'Ecuyer and Hellekalek}{1998}]{LH98}
\begin{bchapter}
\bauthor{\binits{P.} \bsnm{L'Ecuyer}},
\bauthor{\binits{P.} \bsnm{Hellekalek}},
\bctitle{Random number generators: selection criteria and testing},
in \bbtitle{Random and quasi-random point sets}.
\bsertitle{Lecture Notes in Statist.},
vol. \bseriesno{138}
(\bpublisher{Springer},
\blocation{New York}, \byear{1998}),
pp. \bfpage{223}--\blpage{265}.
doi:\doiurl{10.1007/978-1-4612-1702-2\_5}.
\burl{http://dx.doi.org/10.1007/978-1-4612-1702-2\_5}
\end{bchapter}
\endbibitem

\bibitem[\protect\citeauthoryear{Lemieux}{2009}]{Lem09}
\begin{bbook}
\bauthor{\binits{C.} \bsnm{Lemieux}},
\bbtitle{Monte Carlo and Quasi-Monte Carlo Sampling}
(\bpublisher{Springer},
\blocation{New York}, \byear{2009})
\end{bbook}
\endbibitem

\bibitem[\protect\citeauthoryear{Leobacher and Pillichshammer}{2003}]{LP03}
\begin{botherref}
\oauthor{\binits{G.} \bsnm{Leobacher}},
\oauthor{\binits{F.} \bsnm{Pillichshammer}},
Bounds for the weighted {$L_p$} discrepancy and tractability of integration.
J. Complexity,
529--547
(2003)
\end{botherref}
\endbibitem

\bibitem[\protect\citeauthoryear{Lin et~al.}{2010}]{LSW10}
\begin{barticle}
\bauthor{\binits{D.K.J.} \bsnm{Lin}},
\bauthor{\binits{C.} \bsnm{Sharpe}},
\bauthor{\binits{P.} \bsnm{Winker}},
\batitle{Optimized {$U$}-type designs on flexible regions}.
\bjtitle{Comput. Statist. Data Anal.}
\bvolume{54}(\bissue{6}),
\bfpage{1505}--\blpage{1515}
(\byear{2010}).
doi:\doiurl{10.1016/j.csda.2010.01.032}.
\burl{http://dx.doi.org/10.1016/j.csda.2010.01.032}
\end{barticle}
\endbibitem

\bibitem[\protect\citeauthoryear{Markhasin}{2013a}]{Mar13}
\begin{barticle}
\bauthor{\binits{L.} \bsnm{Markhasin}},
\batitle{Discrepancy of generalized {H}ammersley type point sets in {B}esov
  spaces with dominating mixed smoothness}.
\bjtitle{Uniform Distribution Theory}
\bvolume{8},
\bfpage{135}--\blpage{164}
(\byear{2013}a)
\end{barticle}
\endbibitem

\bibitem[\protect\citeauthoryear{Markhasin}{2013b}]{MarArXiv}
\begin{barticle}
\bauthor{\binits{L.} \bsnm{Markhasin}},
\batitle{{Q}uasi-{M}onte {C}arlo methods for integration of functions with
  dominating mixed smoothness in arbitrary dimension}.
\bjtitle{Journal of Complexity}
\bvolume{29}(\bissue{5}),
\bfpage{370}--\blpage{388}
(\byear{2013}b).
doi:\doiurl{10.1016/j.jco.2013.04.007}
\end{barticle}
\endbibitem

\bibitem[\protect\citeauthoryear{Matou\v{s}ek}{1998}]{Mat98}
\begin{barticle}
\bauthor{\binits{J.} \bsnm{Matou\v{s}ek}},
\batitle{On the ${L}_2$-discrepancy for anchored boxes}.
\bjtitle{J. Complexity}
\bvolume{14},
\bfpage{527}--\blpage{556}
(\byear{1998})
\end{barticle}
\endbibitem

\bibitem[\protect\citeauthoryear{Matou\v{s}ek}{2010}]{Mat99}
\begin{bbook}
\bauthor{\binits{J.} \bsnm{Matou\v{s}ek}},
\bbtitle{Geometric Discrepancy},
\bedition{2nd} edn.
(\bpublisher{Springer},
\blocation{Berlin}, \byear{2010})
\end{bbook}
\endbibitem

\bibitem[\protect\citeauthoryear{Mehlhorn}{1984}]{Meh84}
\begin{bbook}
\bauthor{\binits{K.} \bsnm{Mehlhorn}},
\bbtitle{Multi-dimensional Searching and Computational Geometry, Data
  Structures and Algorithms 3}
(\bpublisher{Springer},
\blocation{Berlin/New York}, \byear{1984})
\end{bbook}
\endbibitem

\bibitem[\protect\citeauthoryear{Morokoff and Caflisch}{1994}]{MC94}
\begin{barticle}
\bauthor{\binits{W.} \bsnm{Morokoff}},
\bauthor{\binits{R.} \bsnm{Caflisch}},
\batitle{Quasi-random sequences and their discrepancies}.
\bjtitle{SIAM J. Sci. Comput.}
\bvolume{15},
\bfpage{1251}--\blpage{1279}
(\byear{1994})
\end{barticle}
\endbibitem

\bibitem[\protect\citeauthoryear{Niederreiter}{1972a}]{Nie72a}
\begin{barticle}
\bauthor{\binits{H.} \bsnm{Niederreiter}},
\batitle{Discrepancy and convex programming}.
\bjtitle{Ann. Mat. Pura Appl.}
\bvolume{93},
\bfpage{89}--\blpage{97}
(\byear{1972}a)
\end{barticle}
\endbibitem

\bibitem[\protect\citeauthoryear{Niederreiter}{1972b}]{Nie72b}
\begin{bchapter}
\bauthor{\binits{H.} \bsnm{Niederreiter}},
\bctitle{Methods for estimating discrepancy},
in \bbtitle{Applications of {N}umber {T}heory to {N}umerical {A}nalysis},
ed. by \beditor{\binits{S.K.} \bsnm{Zaremba}}
(\bpublisher{Academic Press},
\blocation{New York}, \byear{1972}b),
pp. \bfpage{203}--\blpage{236}
\end{bchapter}
\endbibitem

\bibitem[\protect\citeauthoryear{Niederreiter}{1992}]{Nie92}
\begin{bbook}
\bauthor{\binits{H.} \bsnm{Niederreiter}},
\bbtitle{Random Number Generation and Quasi-{M}onte {C}arlo Methods}.
\bsertitle{SIAM CBMS-NSF Regional Conference Series in Applied Mathematics},
vol. \bseriesno{63}
(\bpublisher{SIAM},
\blocation{Philadelphia}, \byear{1992})
\end{bbook}
\endbibitem

\bibitem[\protect\citeauthoryear{Novak and Ritter}{1996}]{NR96}
\begin{barticle}
\bauthor{\binits{E.} \bsnm{Novak}},
\bauthor{\binits{K.} \bsnm{Ritter}},
\batitle{High dimensional integration of smooth functions over cubes}.
\bjtitle{Numer. Math.}
\bvolume{75},
\bfpage{79}--\blpage{97}
(\byear{1996})
\end{barticle}
\endbibitem

\bibitem[\protect\citeauthoryear{Novak and Wo\'zniakowski}{2008}]{NW08}
\begin{bbook}
\bauthor{\binits{E.} \bsnm{Novak}},
\bauthor{\binits{H.} \bsnm{Wo\'zniakowski}},
\bbtitle{Tractability of Multivariate Problems. Vol. 1, Linear Information}.
\bsertitle{EMS Tracts in Mathematics}
(\bpublisher{European Mathematical Society (EMS)},
\blocation{Z\"urich}, \byear{2008})
\end{bbook}
\endbibitem

\bibitem[\protect\citeauthoryear{Novak and Wo\'zniakowski}{2009}]{NW09}
\begin{bchapter}
\bauthor{\binits{E.} \bsnm{Novak}},
\bauthor{\binits{H.} \bsnm{Wo\'zniakowski}},
\bctitle{${L}_2$ discrepancy and multivariate integration},
in \bbtitle{Analytic number theory. Essays in honour of Klaus Roth.},
ed. by \beditor{\binits{W.W.L.} \bsnm{Chen}},
\beditor{\binits{W.T.} \bsnm{Gowers}},
\beditor{\binits{H.} \bsnm{Halberstam}},
\beditor{\binits{W.M.} \bsnm{Schmidt}},
\beditor{\binits{R.C.} \bsnm{Vaughan}}
(\bpublisher{Cambridge Univ. Press},
\blocation{Cambridge}, \byear{2009}),
pp. \bfpage{359}--\blpage{388}
\end{bchapter}
\endbibitem

\bibitem[\protect\citeauthoryear{Novak and Wo\'zniakowski}{2010}]{NW10}
\begin{bbook}
\bauthor{\binits{E.} \bsnm{Novak}},
\bauthor{\binits{H.} \bsnm{Wo\'zniakowski}},
\bbtitle{Tractability of Multivariate Problems. Vol. 2, Standard Information
  for Functionals}.
\bsertitle{EMS Tracts in Mathematics}
(\bpublisher{European Mathematical Society (EMS)},
\blocation{Z\"urich}, \byear{2010})
\end{bbook}
\endbibitem

\bibitem[\protect\citeauthoryear{Nuyens and Cools}{2006}]{NC06}
\begin{barticle}
\bauthor{\binits{D.} \bsnm{Nuyens}},
\bauthor{\binits{R.} \bsnm{Cools}},
\batitle{Fast algorithms for component-by-component construction of rank-$1$
  lattice rules in shift-invariant reproducing kernel {H}ilbert spaces}.
\bjtitle{Math. Comp.}
\bvolume{75},
\bfpage{903}--\blpage{920}
(\byear{2006})
\end{barticle}
\endbibitem

\bibitem[\protect\citeauthoryear{{\"O}kten}{1999}]{Okt99}
\begin{barticle}
\bauthor{\binits{G.} \bsnm{{\"O}kten}},
\batitle{Error reduction techniques in quasi-{M}onte {C}arlo integration}.
\bjtitle{Math. Comput. Modelling}
\bvolume{30},
\bfpage{61}--\blpage{69}
(\byear{1999})
\end{barticle}
\endbibitem

\bibitem[\protect\citeauthoryear{\"Okten et~al.}{2012}]{OSG12}
\begin{bchapter}
\bauthor{\binits{G.} \bsnm{\"Okten}},
\bauthor{\binits{M.} \bsnm{Shah}},
\bauthor{\binits{Y.} \bsnm{Goncharov}},
\bctitle{Random and deterministic digit permutations of the {H}alton sequence},
in \bbtitle{Monte Carlo and Quasi-Monte Carlo Methods 2010},
ed. by \beditor{\binits{L.} \bsnm{Plaskota}},
\beditor{\binits{H.} \bsnm{Wo\'zniakowski}}
(\bpublisher{Springer},
\blocation{Berlin Heidelberg}, \byear{2012}),
pp. \bfpage{609}--\blpage{622}
\end{bchapter}
\endbibitem

\bibitem[\protect\citeauthoryear{Overmars and Yap}{1991}]{OY91}
\begin{barticle}
\bauthor{\binits{M.H.} \bsnm{Overmars}},
\bauthor{\binits{C.-K.} \bsnm{Yap}},
\batitle{New upper bounds in {K}lee's measure problem}.
\bjtitle{SIAM J. Comput.}
\bvolume{20}(\bissue{6}),
\bfpage{1034}--\blpage{1045}
(\byear{1991})
\end{barticle}
\endbibitem

\bibitem[\protect\citeauthoryear{Paskov}{1993}]{Pas93}
\begin{barticle}
\bauthor{\binits{S.H.} \bsnm{Paskov}},
\batitle{Average case complexity of multivariate integration for smooth
  functions}.
\bjtitle{J.~Complexity}
\bvolume{9},
\bfpage{291}--\blpage{312}
(\byear{1993})
\end{barticle}
\endbibitem

\bibitem[\protect\citeauthoryear{Peart}{1982}]{Pea82}
\begin{barticle}
\bauthor{\binits{P.} \bsnm{Peart}},
\batitle{The dispersion of the {H}ammersley sequence in the unit square}.
\bjtitle{Monatshefte Math.}
\bvolume{94},
\bfpage{249}--\blpage{261}
(\byear{1982})
\end{barticle}
\endbibitem

\bibitem[\protect\citeauthoryear{Pillards and Cools}{2005}]{PC05}
\begin{barticle}
\bauthor{\binits{T.} \bsnm{Pillards}},
\bauthor{\binits{R.} \bsnm{Cools}},
\batitle{A note on {E}. {T}hi\'emard's algorithm to compute bounds for the star
  discrepancy}.
\bjtitle{J. Complexity}
\bvolume{21},
\bfpage{320}--\blpage{323}
(\byear{2005})
\end{barticle}
\endbibitem

\bibitem[\protect\citeauthoryear{Pillards et~al.}{2006}]{PVC06}
\begin{barticle}
\bauthor{\binits{T.} \bsnm{Pillards}},
\bauthor{\binits{B.} \bsnm{Vandewoestyne}},
\bauthor{\binits{R.} \bsnm{Cools}},
\batitle{Minimizing the {$L_2$} and {$L_\infty$} star discrepancies of a single
  point in the unit hypercube}.
\bjtitle{J. Comput. Appl. Math.}
\bvolume{197}(\bissue{1}),
\bfpage{282}--\blpage{285}
(\byear{2006}).
doi:\doiurl{10.1016/j.cam.2005.11.005}.
\burl{http://dx.doi.org/10.1016/j.cam.2005.11.005}
\end{barticle}
\endbibitem

\bibitem[\protect\citeauthoryear{R\"omisch}{2009}]{Rom09}
\begin{bchapter}
\bauthor{\binits{W.} \bsnm{R\"omisch}},
\bctitle{Scenario reduction techniques in stochastic programming},
in \bbtitle{SAGA 2009},
ed. by \beditor{\binits{O.} \bsnm{Watanabe}},
\beditor{\binits{T.} \bsnm{Zeugmann}}
\bsertitle{LNCS},
vol. \bseriesno{43}
(\bpublisher{Springer},
\blocation{Berlin-Heidelberg}, \byear{2009}),
pp. \bfpage{1}--\blpage{14}
\end{bchapter}
\endbibitem

\bibitem[\protect\citeauthoryear{Schmidt}{1975}]{Sch75}
\begin{botherref}
\oauthor{\binits{W.M.} \bsnm{Schmidt}},
Irregularities of distribution {IX}.
Acta Arith.,
385--396
(1975)
\end{botherref}
\endbibitem

\bibitem[\protect\citeauthoryear{Shah}{2010}]{Sha10}
\begin{barticle}
\bauthor{\binits{M.} \bsnm{Shah}},
\batitle{A genetic algorithm approach to estimate lower bounds of the star
  discrepancy}.
\bjtitle{Monte Carlo Methods Appl.}
\bvolume{16},
\bfpage{379}--\blpage{398}
(\byear{2010})
\end{barticle}
\endbibitem

\bibitem[\protect\citeauthoryear{Sloan and Joe}{1994}]{SJ94}
\begin{bbook}
\bauthor{\binits{I.H.} \bsnm{Sloan}},
\bauthor{\binits{S.} \bsnm{Joe}},
\bbtitle{Lattice Methods for Multiple Integration}
(\bpublisher{Clarendon Press},
\blocation{Oxford}, \byear{1994})
\end{bbook}
\endbibitem

\bibitem[\protect\citeauthoryear{Sloan and Reztsov}{2002}]{SR02}
\begin{barticle}
\bauthor{\binits{I.H.} \bsnm{Sloan}},
\bauthor{\binits{A.V.} \bsnm{Reztsov}},
\batitle{Component-by-component construction of good lattice rules}.
\bjtitle{Math. Comp.}
\bvolume{71},
\bfpage{263}--\blpage{273}
(\byear{2002})
\end{barticle}
\endbibitem

\bibitem[\protect\citeauthoryear{Sloan and Wo\'{z}niakowski}{1998}]{SW98}
\begin{barticle}
\bauthor{\binits{I.H.} \bsnm{Sloan}},
\bauthor{\binits{H.} \bsnm{Wo\'{z}niakowski}},
\batitle{When are quasi-{M}onte {C}arlo algorithms efficient for high
  dimensional integrals?}
\bjtitle{J. Complexity}
\bvolume{14},
\bfpage{1}--\blpage{33}
(\byear{1998})
\end{barticle}
\endbibitem

\bibitem[\protect\citeauthoryear{Sloan et~al.}{2002}]{SKJ02}
\begin{barticle}
\bauthor{\binits{I.H.} \bsnm{Sloan}},
\bauthor{\binits{F.Y.} \bsnm{Kuo}},
\bauthor{\binits{S.} \bsnm{Joe}},
\batitle{On the step-by-step construction of quasi-monte carlo integration
  rules that achieve strong tractability error bounds in weighted sobolev
  spaces}.
\bjtitle{Math. Comp.}
\bvolume{71},
\bfpage{1609}--\blpage{1640}
(\byear{2002})
\end{barticle}
\endbibitem

\bibitem[\protect\citeauthoryear{Smolyak}{1963}]{Smo63}
\begin{barticle}
\bauthor{\binits{S.A.} \bsnm{Smolyak}},
\batitle{Quadrature and interpolation formulas for tensor products of certain
  classes of functions}.
\bjtitle{Dokl. Akad. Nauk. SSSR}
\bvolume{4},
\bfpage{1042}--\blpage{1045}
(\byear{1963})
\end{barticle}
\endbibitem

\bibitem[\protect\citeauthoryear{Sobol'}{1967}]{Sob67}
\begin{barticle}
\bauthor{\binits{I.M.} \bsnm{Sobol'}},
\batitle{The distribution of points in a cube and the approximate evaluation of
  integrals}.
\bjtitle{Zh. Vychisl. Mat. i. Mat. Fiz.}
\bvolume{7},
\bfpage{784}--\blpage{802}
(\byear{1967})
\end{barticle}
\endbibitem

\bibitem[\protect\citeauthoryear{Sobol' et~al.}{2011}]{SAKK11}
\begin{barticle}
\bauthor{\binits{I.M.} \bsnm{Sobol'}},
\bauthor{\binits{D.} \bsnm{Asotsky}},
\bauthor{\binits{A.} \bsnm{Kreinin}},
\bauthor{\binits{S.} \bsnm{Kucherenko}},
\batitle{Construction and comparison of high-dimensional {S}obol' generators}.
\bjtitle{Wilmott Magazine}
\bvolume{Nov. 2011},
\bfpage{64}--\blpage{79}
(\byear{2011})
\end{barticle}
\endbibitem

\bibitem[\protect\citeauthoryear{Srinivasan}{2001}]{Sri01}
\begin{bchapter}
\bauthor{\binits{A.} \bsnm{Srinivasan}},
\bctitle{Distributions on level-sets with applications to approximation
  algorithms},
in \bbtitle{Proceedings of FOCS'01},
\byear{2001},
pp. \bfpage{588}--\blpage{597}
\end{bchapter}
\endbibitem

\bibitem[\protect\citeauthoryear{Steinerberger}{2010}]{Stei10}
\begin{botherref}
\oauthor{\binits{S.} \bsnm{Steinerberger}},
The asymptotic behavior of the average {$L_p$}-discrepancies and a randomized
  discrepancy.
Electron. J. Combin.,
1--18
(2010).
Research Paper 106
\end{botherref}
\endbibitem

\bibitem[\protect\citeauthoryear{Stute}{1977}]{Stu77}
\begin{botherref}
\oauthor{\binits{W.} \bsnm{Stute}},
Convergence rates for the isotrope discrepancy.
Ann. Probability,
707--723
(1977)
\end{botherref}
\endbibitem

\bibitem[\protect\citeauthoryear{Temlyakov}{1992}]{Tem90}
\begin{barticle}
\bauthor{\binits{V.N.} \bsnm{Temlyakov}},
\batitle{On a way of obtaining lower estimates for the errors of quadrature
  formulas}.
\bjtitle{Math. USSR Sbornik}
\bvolume{71},
\bfpage{247}--\blpage{257}
(\byear{1992})
\end{barticle}
\endbibitem

\bibitem[\protect\citeauthoryear{Tezuka}{1995}]{Tez95}
\begin{bbook}
\bauthor{\binits{S.} \bsnm{Tezuka}},
\bbtitle{Uniform Random Numbers: Theory and Practice}
(\bpublisher{Kluwer Academic Publishers},
\blocation{Boston}, \byear{1995})
\end{bbook}
\endbibitem

\bibitem[\protect\citeauthoryear{Thi\'emard}{1998}]{Thi98}
\begin{botherref}
\oauthor{\binits{E.} \bsnm{Thi\'emard}},
1998,
Economic generation of low-discrepancy sequences with a $b$-ary Gray code.
EPFL-DMA-ROSO, RO981201, http://rosowww.epfl.ch/papers/grayfaure/
\end{botherref}
\endbibitem

\bibitem[\protect\citeauthoryear{Thi\'emard}{2000a}]{Thi00}
\begin{barticle}
\bauthor{\binits{E.} \bsnm{Thi\'emard}},
\batitle{Computing bounds for the star discrepancy}.
\bjtitle{Computing}
\bvolume{65},
\bfpage{169}--\blpage{186}
(\byear{2000}a)
\end{barticle}
\endbibitem

\bibitem[\protect\citeauthoryear{Thi\'emard}{2000b}]{Thi00PhD}
\begin{bbook}
\bauthor{\binits{E.} \bsnm{Thi\'emard}},
\bbtitle{Sur le calcul et la majoration de la discr\'epance \`a l'origine}
(\bpublisher{PhD thesis \'Ecole polytechnique f\'ed\'erale de Lausanne EPFL,
  nbr 2259},
\blocation{Lausanne}, \byear{2000}b).
\bcomment{Available from \url{http://infoscience.epfl.ch/record/32735}}
\end{bbook}
\endbibitem

\bibitem[\protect\citeauthoryear{Thi\'emard}{2001a}]{Thi01a}
\begin{barticle}
\bauthor{\binits{E.} \bsnm{Thi\'emard}},
\batitle{An algorithm to compute bounds for the star discrepancy}.
\bjtitle{J. Complexity}
\bvolume{17},
\bfpage{850}--\blpage{880}
(\byear{2001}a)
\end{barticle}
\endbibitem

\bibitem[\protect\citeauthoryear{Thi\'emard}{2001b}]{Thi01b}
\begin{barticle}
\bauthor{\binits{E.} \bsnm{Thi\'emard}},
\batitle{Optimal volume subintervals with $k$ points and star discrepancy via
  integer programming}.
\bjtitle{Math. Meth. Oper. Res.}
\bvolume{54},
\bfpage{21}--\blpage{45}
(\byear{2001}b)
\end{barticle}
\endbibitem

\bibitem[\protect\citeauthoryear{Triebel}{2010}]{Tri10}
\begin{bbook}
\bauthor{\binits{H.} \bsnm{Triebel}},
\bbtitle{Bases in Function Spaces, Sampling, Discrepancy, Numerical
  Integration}.
\bsertitle{EMS Tracts in Mathematics 11}
(\bpublisher{European Mathematical Society},
\blocation{Z\"urich}, \byear{2010})
\end{bbook}
\endbibitem

\bibitem[\protect\citeauthoryear{Vandewoestyne and Cools}{2006}]{VC06}
\begin{barticle}
\bauthor{\binits{B.} \bsnm{Vandewoestyne}},
\bauthor{\binits{R.} \bsnm{Cools}},
\batitle{Good permutations for deterministic scrambled {H}alton sequences in
  terms of ${L}_2$-discrepancy}.
\bjtitle{J. Comput. Appl. Math.}
\bvolume{189},
\bfpage{341}--\blpage{361}
(\byear{2006})
\end{barticle}
\endbibitem

\bibitem[\protect\citeauthoryear{Wang and Hickernell}{2000}]{WH00}
\begin{barticle}
\bauthor{\binits{X.} \bsnm{Wang}},
\bauthor{\binits{F.J.} \bsnm{Hickernell}},
\batitle{Randomized {H}alton sequences}.
\bjtitle{Math. Comput. Modelling}
\bvolume{32},
\bfpage{887}--\blpage{899}
(\byear{2000})
\end{barticle}
\endbibitem

\bibitem[\protect\citeauthoryear{Warnock}{1972}]{War72}
\begin{bchapter}
\bauthor{\binits{T.T.} \bsnm{Warnock}},
\bctitle{Computational investigations of low-discrepancy point sets},
in \bbtitle{Applications of number theory to numerical analysis},
ed. by \beditor{\binits{S.K.} \bsnm{Zaremba}}
(\bpublisher{Academic Press},
\blocation{New York}, \byear{1972}),
pp. \bfpage{319}--\blpage{343}
\end{bchapter}
\endbibitem

\bibitem[\protect\citeauthoryear{Warnock}{2013}]{War13}
\begin{botherref}
\oauthor{\binits{T.T.} \bsnm{Warnock}},
2013.
Personal communication
\end{botherref}
\endbibitem

\bibitem[\protect\citeauthoryear{Wasilkowski and Wo\'zniakowski}{1995}]{WW95}
\begin{barticle}
\bauthor{\binits{G.W.} \bsnm{Wasilkowski}},
\bauthor{\binits{H.} \bsnm{Wo\'zniakowski}},
\batitle{Explicit cost bounds of algorithms for multivariate tensor product
  problems}.
\bjtitle{J. Complexity}
\bvolume{11},
\bfpage{1}--\blpage{56}
(\byear{1995})
\end{barticle}
\endbibitem

\bibitem[\protect\citeauthoryear{White}{1976/77}]{Whi77}
\begin{barticle}
\bauthor{\binits{B.E.} \bsnm{White}},
\batitle{On optimal extreme-discrepancy point sets in the square}.
\bjtitle{Numer. Math.}
\bvolume{27},
\bfpage{157}--\blpage{164}
(\byear{1976/77})
\end{barticle}
\endbibitem

\bibitem[\protect\citeauthoryear{Winker and Fang}{1997}]{WF97}
\begin{barticle}
\bauthor{\binits{P.} \bsnm{Winker}},
\bauthor{\binits{K.T.} \bsnm{Fang}},
\batitle{Applications of threshold-accepting to the evaluation of the
  discrepancy of a set of points}.
\bjtitle{SIAM J. Numer. Anal.}
\bvolume{34},
\bfpage{2028}--\blpage{2042}
(\byear{1997})
\end{barticle}
\endbibitem

\bibitem[\protect\citeauthoryear{Winzen}{2007}]{Win07}
\begin{botherref}
\oauthor{\binits{C.} \bsnm{Winzen}},
\textit{{A}pproximative {B}erechnung der {S}terndiskrepanz}
(Christian-Albrechts-Universit\"at zu Kiel,
Kiel, 2007)
\end{botherref}
\endbibitem

\bibitem[\protect\citeauthoryear{Wo{\'z}niakowski}{1991}]{Woz91}
\begin{barticle}
\bauthor{\binits{H.} \bsnm{Wo{\'z}niakowski}},
\batitle{Average case complexity of multivariate integration}.
\bjtitle{Bull. Amer. Math. Soc. (N. S.)}
\bvolume{24},
\bfpage{185}--\blpage{191}
(\byear{1991})
\end{barticle}
\endbibitem

\bibitem[\protect\citeauthoryear{Zaremba}{1968}]{Zar68}
\begin{barticle}
\bauthor{\binits{S.K.} \bsnm{Zaremba}},
\batitle{Some applications of multidimensional integration by parts}.
\bjtitle{Ann. Polon. Math.}
\bvolume{21},
\bfpage{85}--\blpage{96}
(\byear{1968})
\end{barticle}
\endbibitem

\bibitem[\protect\citeauthoryear{Zaremba}{1970}]{Zar70}
\begin{barticle}
\bauthor{\binits{S.K.} \bsnm{Zaremba}},
\batitle{La discr\'epance isotrope et l'int\'egration num\'erique}.
\bjtitle{Ann. Mat. Pura Appl.}
\bvolume{37},
\bfpage{125}--\blpage{136}
(\byear{1970})
\end{barticle}
\endbibitem

\bibitem[\protect\citeauthoryear{Zenger}{1991}]{Zen91}
\begin{bchapter}
\bauthor{\binits{C.} \bsnm{Zenger}},
\bctitle{Sparse grids},
in \bbtitle{Parallel Algorithms for Partial Differential Equations},
ed. by \beditor{\binits{W.} \bsnm{Hackbusch}}
(\bpublisher{Vieweg},
\blocation{Braunschweig}, \byear{1991}),
pp. \bfpage{241}--\blpage{251}
\end{bchapter}
\endbibitem

\bibitem[\protect\citeauthoryear{Zinterhof}{1976}]{Zin76}
\begin{barticle}
\bauthor{\binits{P.} \bsnm{Zinterhof}},
\batitle{{{\"U}ber einige {A}bsch\"{a}tzungen bei der {A}pproximation von
  {F}unktionen mit {G}leichverteilungsmethoden}}.
\bjtitle{Sitzungsber. {\"O}sterr. Akad. Wiss. Math.-Naturwiss. Kl. II}
\bvolume{185},
\bfpage{121}--\blpage{132}
(\byear{1976})
\end{barticle}
\endbibitem

\end{thebibliography}

\end{document}